\newfont{\msam}{msam10}
\newtheorem{theorem}{Theorem}[section]
\newtheorem{theorem_noname}{Theorem}
\newtheorem{proposition}[theorem]{Proposition}
\newtheorem{corollary}[theorem]{Corollary}
\newtheorem{lemma}[theorem]{Lemma}
\theoremstyle{definition}
\newtheorem{definition}[theorem]{Definition}
\newtheorem{remark}[theorem]{Remark}
\let\nc\newcommand
\nc{\la}{\label}
\def\bthm{\begin{theorem}}
\def\ethm{\end{theorem}}
\def\blemma{\begin{lemma}}
\def\elemma{\end{lemma}}
\def\bproof{\begin{proof}}
\def\eproof{\end{proof}}
\def\bprop{\begin{proposition}}
\def\eprop{\end{proposition}}
\def\Z{\mathbb{Z}}
\def\O{\mathcal{O}}
\def\N{\mathbb{N}}
\def\RR{R}
\def\H{\mathrm{\ddot H}}
\def\e{\boldsymbol{\mathrm{e}}}
\def\q{\mathbf{q}}
\def\g{\mathfrak{g}}
\def\sl{\mathfrak{sl}}
\def\C{\mathbb{C}}
\nc{\Hom}{{\rm{Hom}}}
\nc{\Ext}{{\rm{Ext}}}
\nc{\htau}{{\bar{ t}}}
\nc{\HOM}{\underline{\rm{Hom}}}
\nc{\EXT}{\underline{\rm{Ext}}}
\nc{\TOR}{\underline{\rm{Tor}}}
\nc{\End}{{\rm{End}}}
\nc{\Map}{{\rm{Map}}}
\nc{\Out}{{\rm{Out}}}
\nc{\GL}{{\rm{GL}}}
\nc{\SL}{{\rm{SL}}}
\nc{\PGL}{{\rm{PGL}}}
\nc{\G}{{\rm{G}}}
\nc{\Rep}{{\rm{Rep}}}
\nc{\ad}{{\rm{ad}}}
\nc{\dlim}{\varinjlim}
\newcommand{\ev}{{\rm{ev}}}
\numberwithin{equation}{section}
\newcommand{\chr}{{\mathrm{Char}}}
\newcommand{\xx}{{\mathbf{x}}}
\newcommand{\yy}{{\mathbf{y}}}
\newcommand{\aab}{{\mathbf{a}}}
\newcommand{\bb}{{\mathbf{b}}}
\newcommand{\mm}{{\mathbf{m}}}
\newcommand{\nn}{{\mathbf{n}}}
\newcommand{\pmone}{{\pm 1}}
\def\SH{\mathrm {S} \H}
\def\E{\mathcal{E}}
\def\cc{\mathcal{C}}
\def\gl{\mathfrak{gl}}
\newcommand{\pic}[2]{\raisebox{-.5\height}{\includegraphics[scale=#2]{#1}}}
\def\aijannulus{\pic{aijannulus}{.50}}
\def\aijtorus{\pic{aijtorus}{.50}}
\def\Aaa{\pic{Aaa} {.600}}
\def\Aeh{\pic{Aeh} {.600}}
\def\Ahe{\pic{Ahe} {.60}}
\def\Cij{\pic{cijtorus}{.5}}
\def\smoothedtorus{\pic{smoothedtorus}{.5}}
\def\back{\pic{back}{.5}}
\def\front{\pic{front}{.5}}
\def\minusonek{\pic{minusonek}{.5}}
\newcommand\Xor{\pic{xor}{.50}}
\newcommand\Yor{\pic{yor} {.50}}
\newcommand\Ior{\pic{ior} {.50}}
 \newcommand\Idor{\pic{idor} {.50}}
\newcommand\Rcurlor{\pic{rcurlor} {.50}}
\newcommand\Lcurlor{\pic{lcurlor} {.50}}
\newcommand\sigmaior{\pic{Sigmaior} {.50}}
 \newcommand\Ponezero{\pic{Ponezero}{.5}}
  \newcommand\Pzeroone{\pic{Pzeroone}{.5}} 
  \newcommand\Ppos{\pic{Ppos}{.5}}
  \newcommand\Pneg{\pic{Pneg}{.5}}
  \newcommand\Poneone{\pic{Poneone}{.5}}
 \newcommand\Torus{\pic{Torus}{.5}}
 \newcommand\backid{\pic{backid}{.5}}
 \newcommand\frontid{\pic{frontid}{.5}}
 \newcommand\Murphycylinder{\pic{Murphycylinder}{.5}}
 \newcommand\CylinderAnnulus{\pic{CylinderAnnulus}{.5}}
 \newcommand\torusD{\pic{torusD}{.5}}
 \newcommand\torusE{\pic{torusE}{.5}}
 \newcommand\torusDE{\pic{torusDE}{.5}}
  \newcommand\torusED{\pic{torusED}{.5}}
\newcommand\bc{\begin{center}}
\newcommand\ec{\end{center}}
\newcommand{\beqn}{\begin{eqnarray*}}
\newcommand{\eeqn}{\end{eqnarray*}}
\renewcommand{\r}{\textcolor{red}}
\begin{document}
\title[]{The Homflypt skein algebra of the torus and the elliptic Hall algebra}
\date{\today}

\author{Hugh Morton}
\address{Department of Mathematical Sciences, University of Liverpool, Peach Street, Liverpool L69 7ZL, UK}
\email{morton@liv.ac.uk}

\author{Peter Samuelson}
\address{Department of Mathematics, University of Toronto, Toronto, ON M4Y 1H5, Canada}
\email{psam@math.utoronto.ca}

\begin{abstract}
We give a generators and relations presentation of the Homflypt skein algebra $H$ of the torus $T^2$, and we give an explicit description of the module corresponding to the solid torus. Using this presentation, we show that $H$ is isomorphic to the $t=q$ specialization of the elliptic Hall algebra of Burban and Schiffmann \cite{BS12}.

As an application, for an iterated cable $K$ of the unknot, we use the elliptic Hall algebra to construct a 3-variable polynomial that specializes to the $\lambda$-colored Homflypt polynomial of $K$. We show that this polynomial also specializes to one constructed by Cherednik and Danilenko using the $\gl_N$ double affine Hecke algebra. This proves one of the Connection Conjectures in \cite{CD14}.

\end{abstract}
\maketitle
\setcounter{tocdepth}{1}
\tableofcontents

\section{Introduction}
In this paper we compare an algebra coming from knot theory with an algebra defined using elliptic curves over finite fields. We use this to derive results about polynomials associated to algebraic knots, which are iterated torus knots that arise from a singular point of a planar algebraic curve. Below we briefly introduce both algebras and then give a precise statements of the results.

\subsection{The Homflypt skein algebra}

The \emph{framed Homflypt skein module }$H(M)$ of an oriented $3$-manifold $M$ consists of $R$-linear combinations of framed oriented links in $M$ up to isotopy, modulo the linear `skein relations' 
\bc
$\Xor - \Yor=(s-s^{-1})\ \Ior$ \qquad (Switch and smooth)\\[2mm]

$\Rcurlor=v^{-1}\ \Idor\ ,\qquad \Lcurlor =v\ \Idor$ \qquad (Framing change)\\[2mm]
\ec
using the ring $R={\Z}[v^{\pm1},s^{\pm1}]$ with denominators $s^r-s^{-r},r>0$ as coefficient ring.

In these relations we use the convention that the framing is defined by a ribbon for each component of the link, and the skein relations apply in a ball in which the framing ribbons  lie parallel to the strands as seen. This is sometimes termed the `local blackboard framing'. The second framing relation then introduces $v^{\pm1}$ when a framing ribbon acquires a single extra twist.

\begin{remark}
The term \emph{skein module}  indicates that $H(M)$ is a module over $R$. Since many skeins are also modules over more complicated $R$-algebras, Morton and his co-workers often simply use the term \emph{skein}, and  suppress the qualifications \emph{framed Homflypt} also when the context is clear. 
\end{remark}

When $F$ is an orientable surface and $M=F\times I$ we adopt the notation $H(F)$ in place of $H(F\times I)$, and refer to $H(F)$ as the skein (module) of the surface $F$.  Framed links in $F\times I$ can be represented by diagrams in $F$, with the global `blackboard framing' from $F$, up to isotopy and Reidemeister moves $ R_{II}, R_{III}$. We can then regard elements of $H(F)$ as diagrams in $F$ modulo $ R_{II}, R_{III}$ and the skein relations. 

The skein $H(F)$ forms an algebra over $R$ under the product induced by placing one copy of $F\times I$ containing an element $D\in H(F)$ on top of another copy containing $E\in H(F)$ to  determine their product $DE$ (see Section \ref{sec_diagrams}).

In the case $\mathcal C=H(F)$ where $F$ is the annulus $S^1\times I$ this algebra is commutative and has been studied for some time. A recent account of some of its properties can be found in \cite{MM08}. It has an interpretation as the algebra of symmetric functions in a large number of commuting variables $x_1,\ldots,x_N$, and contains an element $P_m$ for each $m$ corresponding to the power sum $x_1^m+\cdots+x^m_N$. One representation of this element, due originally to Aiston \cite{Ais96}, is a multiple of the sum of $m$ explicit closed $m$-braids (see Section \ref{sec_annulus}).

The case of primary interest to us is the skein $H=H(T^2)$ for the surface $T^2$. As an algebra $H$ is  non-commutative, and can be generated by elements $P_{\xx}$, one for each ${\xx}\in {\Z}^2 \setminus \{0\}$, corresponding to free homotopy classes of curves in $T^2$. 

For a primitive ${\xx}=(m,n)\in {\Z}^2$ we represent $P_{\xx}$ by the oriented embedded $(m,n)$ curve on $T^2$. It is an immediate consequence of the switch and smooth skein relation that the commutator $[P_{(1,0)}, P_{ (0,1)}]$ satisfies $$[P_{(1,0)}, P_{ (0,1)}]=(s-s^{-1})P_{(1,1)}.$$
 The same switching and smoothing relation shows that $[P_{\bf x},P_{\bf y}] =(s-s^{-1})P_{{\bf x}+{\bf y}}$ when the primitive curves $\bf x$ and $\bf y$ cross once in the positive direction. Our main result is the following satisfyingly simple extension of these commutation relations (see Theorem \ref{thm_commutationrelations}).
 
\begin{theorem_noname}{\rm (Global switch and smooth)}. The commutator $[P_{\xx},P_{\yy}] $ in $H$ satisfies
\begin{equation}\label{eq_commintro} 
[P_{\bf x},P_{\bf y}] =(s^d-s^{-d})P_{{\bf x}+{\bf y}} 
\end{equation}
where $d=\det[{\bf x \ y}]$ is the signed crossing number of $\bf x$ with $\bf y$.\\
\end{theorem_noname}

In making the statement of Theorem \ref{thm_commutationrelations} we must also specify $P_{m\bf x}$ for any multiple of a primitive $\bf x$. This is defined by decorating the embedded curve $\bf x$ by the element $P_m$ from the skein $\mathcal C$ of the annulus. (See Definition \ref{def_px}.)

When $\bf x, \bf y$ and ${\bf x}+{\bf y}$ are all primitive  the commutator $[P_{\bf x},P_{\bf y}] $ may be regarded as the difference of a switch of the curves $\bf x$ and $\bf y$, with $P_{{\bf x}+{\bf y}}$ appearing as the simultaneous smoothing at the $k$ crossings.

The proof of  Theorem \ref{thm_commutationrelations} relies   on a result in \cite{MM08} to establish that $$[P_{(m,0)},P_{(0,1)}] =(s^m-s^{-m})P_{(m,1)}.$$ Direct skein manipulation shows that $$[P_{(0,-1)},P_{(m,1)}] =(s^m-s^{-m})P_{(m,0)},$$ using Aiston's representation of $P_m$. The full theorem is  proved  from these two cases in Section \ref{sec_relations} using induction on $\det[ \xx \ \yy]$. Using a theorem of Przytycki in \cite{Prz92}, we prove the following (see Corollary \ref{cor_basis}).
\begin{theorem_noname}\label{thm_intropres}
As an abstract algebra, $H(T^2)$ has a presentation with generators $P_\xx$ for $\xx \in \Z^2$ and relations given by equation (\ref{eq_commintro}).
\end{theorem_noname}

 In general, if $M$ is a $3$-manifold, then $H(M)$ is a module over the algebra $H(\partial M)$. In particular, the skein module $H(S^3 \setminus K)$ of a knot complement is a module over the algebra $H(T^2)$. In the case where $K$ is the trivial knot we use earlier work of Morton and coauthors \cite{MH02, HM06} to give an explicit description of $\cc := H( S^1\times D^2)$ as a module over $H(T^2)$. In particular, as a module over the (commutative) `horizontal' subalgebra generated by $\{P_{m,0}\mid m \in \Z\}$ it is simultaneously diagonalizable with distinct eigenvalues. Over the `vertical' subalgebra generated by $\{P_{0,n}\mid n \in \Z\}$ the module $\cc$ is free of rank 1. We give explicit formulas for this action in Theorem \ref{thm_Cactionfull}.

These theorems can be viewed as analogues of the work of Frohman and Gelca in \cite{FG00}. They give a presentation of the Kauffman bracket skein algebra $K_q(T^2)$ of the torus $T^2$ and describe its action on the (Kauffman bracket) skein of the annulus. It turns out that the algebra $K_q(T^2)$ is the $t=q$ specialization of the $\sl_2$ spherical double affine Hecke algebra introduced by Cherednik in \cite{Che95}, which is an algebra that has attracted much attention in representation theory. We show that an analogous statement is true for the Homfly skein algebra $H$.

\subsection{The elliptic Hall algebra}
Let $X$ be a smooth elliptic curve over a finite field. In \cite{BS12}, Burban and Schiffmann gave an explicit presentation for the \emph{elliptic Hall algebra} $\E_{q,t}$, which is the Drinfeld double of the Hall algebra of the category $Coh(X)$ of coherent sheaves over $X$. They show that the structure constants are Laurent polynomials in $q,t$, where $q^2, t^2$ are the eigenvalues of the Frobenius operator on the first $l$-adic cohomology group of $X$. We may therefore view $q,t$ as formal parameters.

It turns out that $\E_{q,t}$ is closely related to a number of algebras that have been studied recently under different names. For example, this algebra (or one of its close cousins\footnote{By `cousin' we mean either the `positive half' $\E^+_{q,t}$ or a central extension of $\E_{q,t}$.}) has appeared as the following: 
\begin{itemize}
\item a generalized quantum affine algebra in \cite{DI97},
\item `a $(q,\gamma)$ analog of the $W_{1+\infty}$ algebra' in \cite{Mik07}
\item the `shuffle algebra' of \cite{FT11},
\item the `spherical $\gl_\infty$ double affine Hecke algebra' in \cite{SV11}, (see also \cite{FFJMM11a})
\item the `quantum continuous $\gl_\infty$' in \cite{FFJMM11a}
\item an algebra of operators on $\oplus K^T(\mathrm{Hilb}_n(\C^2))$ in \cite{SV13}. (See also \cite{FT11}, \cite{FFJMM11a}.)
\end{itemize}
As a consequence of Theorem \ref{thm_intropres}, we add another algebra to this list (see Theorem \ref{thm_HisotoE}). 
\begin{theorem_noname}\label{thm_introiso} 
The algebra $H$ is isomorphic to $\E_{q,q}$. In particular, any knot  $K \subset S^3$ provides a module over the algebra $\E_{q,q}$. 
\end{theorem_noname}

This theorem may seem surprising because the definitions of $\E_{q,t}$ and $H$ seem completely unrelated. One rough heuristic explanation for this isomorphism is as follows. When all parameters are set equal\footnote{``Setting all parameters equal to one'' is a statement that requires some care to be made precise, but we will not discuss this.} 
to $1$, the Homflypt skein algebra of a surface $F$ surjects onto $\O_{F,N} := \O(\chr(\pi_1(F), \GL_N(\C))$, the ring of functions on the scheme parameterizing representations of $\pi_1(F)$ up to equivalence. 
The $\gl_N$ spherical double affine Hecke algebra $\SH_{q,t}^N$ is an algebra depending on 2 parameters $q,t \in \C^*$, and when $q=t=1$, this algebra is isomorphic to $\O_{T^2,N}$. Finally, Schiffmann and Vasserot show in \cite{SV11} that $\E_{q,t}$ surjects onto $\SH_{q,t}^N$ for any $N$. Summarizing, when the parameters are set to 1, both $H$ and $\E_{1,1}$ surject onto the (commutative) algebras $\O_{T^2,N}$ for any $N$.

We also remark that Theorem \ref{thm_introiso} seems analogous to Kontsevitch's homological mirror symmetry for an elliptic curve (see \cite{PZ98} for a precise statement and proof in this case). Very roughly, in this case mirror symmetry predicts that the derived category $D^b(Coh(X))$ of coherent sheaves over an elliptic curve $X$ over $\C$ is 
equivalent to the Fukaya category of the symplectic torus $S^1 \times S^1$. Objects in this Fukaya category are simple closed curves `decorated' by a representation of $U(n)$, and under this equivalence, a sheaf on $X$ of slope $m/n$ is sent to the $(m,n)$ curve on $S^1\times S^1$, decorated by a certain representation. This is reminiscent of the isomorphism in Theorem \ref{thm_introiso}; however, it seems that there are significant difficulties (at best) in attempting to prove this isomorphism using mirror symmetry.

\subsection{Algebraic knots}
We next discuss an application of the isomorphism $H \cong \E_{q,q}$. If $K$ is an iterated cable of the unknot, then it is straightforward to give a \emph{cabling formula} for the $\lambda$-colored Homflypt polynomial $J^H(K,\lambda)$ of $K$ in terms of the action of the algebra $H$ on $\cc$ and the Homflypt evaluation map $\ev^H: \cc \to H(S^3)$ (see Proposition \ref{prop_cabling}). Using the isomorphism of Theorem \ref{thm_HisotoE} and a theorem in \cite{SV13}, we show that all objects used in the cabling formula have $t$-deformations. This allows us to define 3-variable polynomials $J^\E(K, \lambda)$ (see Definition \ref{def_JE}) that specialize to the Homflypt polynomial $J^H(K,\lambda)$. (Technically, these are rational functions for generic $t$ - see Remark \ref{rmk_rational}.)

In \cite{Sam14} similar polynomials were defined using the $\sl_2$ spherical double affine Hecke algebra, and Cherednik and Danilenko generalized these to arbitrary $\g$ shortly afterwards in \cite{CD14} (with a slightly simpler construction). For $\g = \gl_N$, we recall their definition of the polynomials $J^N(K,\lambda)$ in Definition \ref{def_JN}. We prove the following
in Theorem \ref{thm_iteratedcable}:
\begin{theorem_noname}
If $K$ is an iterated cable of the unknot, we have the following specializations:
\begin{eqnarray*}
J^\E(K,\lambda; q,t,u)\Big|_{q=s^{-2}, t=s^{-2}, u = v^2} &=& v^\bullet s^\bullet J^H(K,\lambda; v,s) \\
{ }  u^\bullet J^\E (K,\lambda; q,t,u)\Big|_{u = t^N} &=& q^\bullet t^\bullet J^N(K,\lambda; q,t)
\end{eqnarray*}
(where the powers denoted by ``$\bullet$'' depend on $K$ and $\lvert \lambda \rvert $, but not on $N$). In particular, the Connection Conjecture \cite[Conj.\ 2.4(i)]{CD14} is true.
\end{theorem_noname}

We remark that the definition of $J^\E(K)$ depends \emph{a priori} on a choice of presentation of $K$ as an iterated cable of the unknot. It isn't clear whether different choices for this presentation produce the same polynomials. (However, it was shown that certain different choices do produce the same polynomials $J^N(K)$ in \cite[Thm.\ 2.1(i)]{CD14}.)

At this point, one natural question is whether the modules over $\E_{q,q}$ associated to knots can be deformed to modules over $\E_{q,t}$ for any $t \in \C^*$ (in the style of \cite{BS14}). A positive answer may be interesting even for the module $\cc$ associated to the unknot. A central extension $\E_{q,t}^c$ of $\E_{q,t}$ acts on $\Lambda$, which is the `Fock space' with basis given by the set of all partitions (see, e.g., \cite{FFJMM11a} or \cite{SV13}). The module $\cc$ has a natural basis indexed by \emph{pairs} of partitions, and so can be identified with $\Lambda \otimes_\C \Lambda$ as a vector space. It does not seem that a module of this `size' has appeared in the recent literature about the representation theory of $\E_{q,t}$, but it does seem reasonable to expect that $\cc$ deforms. We hope to address this question in future work.

\subsection{Summary}
We now summarize the contents of the paper. In Section \ref{sec_background} we provide brief background and definitions. We then give a presentation for the algebra $H$ in Section \ref{sec_relations}. The module $\cc$ associated to the solid torus is described explicitly in Section \ref{sec_solidtorus}. In Section \ref{sec_ehall} we prove that $H$ is the $t=q$ specialization of the elliptic Hall algebra $\E_{q,t}$. In Section \ref{sec_adaptations} we describe different specializations of the Homflypt skein relations and their relation to other knot invariants. In Section \ref{sec_iteratedcables} we use the elliptic Hall algebra to construct a 3-variable polynomial that specializes to the Homflypt polynomial for iterated torus knots and to the 2-variable polynomials for $\gl_N$ constructed in \cite{CD14}.

\noindent\textbf{Acknowledgments:} We would like to thank I. Cherednik for helpful discussions about \cite{CD14}, and D. Muthiah for help with using \texttt{Sage} and for several enthusiastic conversations. We also thank Y. Berest, F. Bergeron, A. Oblomkov, V. Shende, O. Schiffmann, and E. Vasserot for enlightening discussions of their work and/or the present paper. The authors also benefited from the Research in Pairs program in Oberwolfach, where final editing on this paper was completed.

\section{Background}\label{sec_background}
In this section we first establish notation for `quantum numbers' and partitions, and we then give definitions and background for Homflypt skein modules.

\subsection{Notation}\label{sec_notation}
In this paper we will use the coefficient ring $R = \C[ v^{\pm 1},s^{\pm 1}, (s-s^{-1})^{-k}]$ (where $k$ ranges over $\N$), and we use the following `quantum numbers:'
\[ [d] := \frac{s^d-s^{-d}}{s-s^{-1}},\quad \quad \{d\} := s^d-s^{-d},\quad \quad \{d\}^+ := s^d + s^{-d}
\]

Let $\lambda = (\lambda_1,\cdots,\lambda_k)$ be a partition of length $k$. We will represent partitions by Young diagrams using the continental convention, so that a nonempty partition always has a box in its lower left corner. We will write $\lambda$ both for a partition and for its representation as a Young diagram. If $x$ is a box in $\lambda$ in row $i$ and column $j$, we will use the standard notations
\begin{equation}\label{eq_pnotation}
 a(x) = \lambda_i - j,\quad l(x) = \lvert \{n \mid \lambda_n \geq j\}\rvert - i,\quad a'(x) = j,\quad l'(x) = i
\end{equation}
Here $a(x)$, $l(x)$, $a'(x)$, and $l'(x)$ are the arm length, leg length, coarm length, and coleg length of $x$, respectively. Graphically, they are the number of cells strictly to the right, strictly above, weakly to the left, and weakly below $x$, respectively. (For a picture see, e.g. \cite[Fig.\ 1]{SV13}.) We will also use the \emph{content} $c(x)$ and \emph{hook length} $hl(x)$, which are defined by
\begin{equation}
 c(x) := j - i,\quad hl(x) := a(x) + l(x) + 1
\end{equation}

\subsection{Homflypt skein modules}
Let $M$ be an oriented 3-manifold. A \emph{framed oriented link} in $M$ is (an ambient isotopy class of) a smooth embedding $\sqcup (S^1 \times [0,1]) \hookrightarrow M$, where each copy of $S^1$ is oriented. Let $\mathcal L(M)$ be the free $R$-module spanned by framed oriented links in $M$, and let $\mathcal L'(M) \subset \mathcal L(M)$ be the $R$-submodule generated by the skein relations in Figure \ref{fig_skeinrelations}. (A \emph{skein relation} is a formal linear combination of links differing only inside a 3-ball as shown in the figure.)

\begin{center}
\begin{figure}[ht]
\bc
$\Xor - \Yor=(s-s^{-1})\ \Ior$ \qquad (Switch and smooth)

$\Rcurlor=v^{-1}\ \Idor\ ,\qquad \Lcurlor =v\ \Idor$ \qquad (Framing change)
\ec
\caption{The Homflypt skein relations}\label{fig_skeinrelations}
\end{figure}
\end{center}

\begin{definition}\label{def_homflyskeinmod}
The \emph{framed Homflypt skein module} $H(M)$ of the manifold $M$ is the quotient $\mathcal L(M) / \mathcal L'(M)$.
\end{definition}

In general, the Homflypt skein $H(M)$ has four important properties:
\begin{enumerate}
\item The skein $H(M)$ is graded by the homology group $\textrm{H}_1(M)$, since each skein relation involves only links in the same homology class.
 \item\label{item_mult} If $M = F \times [0,1]$, then $H(M)$ is an algebra (which is typically noncommutative). The product is given by stacking links (see Section \ref{sec_diagrams} for an example). The grading is additive under the product.
 \item If $\partial M = F$, then $H(M)$ is a module over the algebra $H(F \times [0,1])$. The action is given by ``pushing links from the boundary into $M$.'' If $a \in H(F \times [0,1])$ and $m \in H(M)$ are homogeneous, then $deg(a\cdot m) = \iota(deg(a)) + deg(m)$, where $\iota_*: \textrm{H}_1(F) \to \textrm{H}_1(M)$ is the map on homology induced by the inclusion $\iota:F \hookrightarrow M$.
 \item An oriented embedding $f:M \hookrightarrow N$ induces an $R$-linear map $f_*:H(M) \to H(N)$. When $f$ is a homeomorphism the map $f_*$ is an isomorphism.
\end{enumerate}

\begin{definition}
Write $H := H(T^2)$ for the (framed Homflypt) skein of the torus $T^2$, and   $\cc := H(S^1 \times I^2)$ for the skein  of the solid torus, with a choice of explicit homeomorphism from the solid torus to $(S^1\times I)\times I$ to specify $\cc$ as an algebra (see the beginning of Section \ref{sec_solidtorus}). 
\end{definition}

In this paper we study the algebra structure of $H$ and the $H$-module structure of $\cc$. The algebra $H$ is graded by $\xx \in \Z^2= H_1(T^2)$. Set \[H=\oplus_{\xx\in\Z^2} H_{\xx}\] where the degree of a link is given by its homology class.  Similarly the algebra $\cc$ has a $\Z$ grading. There is an action of $\SL_2(\Z)$ by algebra automorphisms on $H$ induced by the mapping class group action on $T^2$.

\subsection{Diagrammatic representations of links in $T^2$.}\label{sec_diagrams}

Use the classical presentation of $T^2$ as a square with pairs of edges identified, 
\bc \Torus
\ec as indicated by the coloured edges in the diagram above.  Then  a link in $T^2$ can be drawn as a diagram in the square with some arcs meeting the coloured edges in matching pairs.

For example here are  diagrams in $T^2$ of a 2-component link
\bc $D\quad =\quad$\torusD\ec    and a 1-component link \bc $E\quad =\quad$\torusE\ec along with diagrams for the products 
\bc$DE\quad=\quad$\torusDE\  and \ $ED\quad=\quad$\torusED\ .\ec

Elements in $H$ are linear combinations of diagrams, so for example  the commutator $[D,E]$ is the element $DE-ED\in H$.\\[2mm]

Using the convention that the square has sides along the usual axes we can draw the embedded $(m,n)$ curve in $T^2$ as \bc
$P_\xx$\quad =\quad  \minusonek \ec meeting the vertical edges in $m$ points and the horizontal edges in $n$ points.  Here  $(m,n) =(-1,k)=\xx\in\Z^2$ is the homology class of the curve, so that $P_\xx$ lies in the graded subspace $H_\xx\subset H$. The curve will cross the edges of the square according to the signs of $m$ and $n$; in this diagram $m=-1$ and $n=k>0$.

This defines elements $P_\xx$ for each primitive $\xx\in\Z^2$, in other words where $m$ and $n$ are coprime. As a simple example we have
\bc $ P_{1,0}$ \ = \ \Ponezero, \quad  $P_{0,1} $\ = \ \Pzeroone \ .\ec Their commutator then satisfies
\beqn [P_{1,0},P_{0,1}]&=& \Ppos\  -\  \Pneg\\[1mm]
&=& (s-s^{-1})\ \Poneone\\[1mm]
&=&(s-s^{-1} )P_{1,1},
\eeqn using the switch and smooth skein relation.

Clearly if the embedded curve $P_\xx$ crosses the embedded curve $P_\yy$ once in the positive sense then the switch and smooth relation applied at the crossing results in the embedded curve $P_{\xx+\yy}$ and their commutator satisfies the equation
\[ [P_\xx,P_\yy]=(s-s^{-1})P_{\xx+\yy}.\]

\begin{remark} The signed number of crossings of curves with homology $\xx$ and  $\yy$ is equal to the determinant $\det[\xx\ \yy]$ of the $2\times 2$ matrix with columns $\xx$ and $\yy$.
\end{remark}

Before giving the definition of the elements $P_{(m,n)}\in H$ where $m$ and $n$ are not coprime we use  diagrams in the square meeting the edges in matched pairs to represent elements of some other skeins. 

\subsection{The skein $\cc$ of the annulus}\label{sec_annulus}
If we identify the two horizontal edges only we get an annulus. Elements of its skein $\cc$ can be represented by diagrams which do not meet the vertical edges.  Any such diagram also gives a diagram in $T^2$ by identifying the two vertical edges. The elements arising in this way in $H$ will lie in the graded part $H_{(0,k)}$ where $k$ is the signed  number of crossings with the horizontal edge.  

For $i,j \ge0$, and $i+j+1=k>0$ define elements $A_{i,j}\in \cc$ diagrammatically by\\[1mm]

\bc $A_{i,j}$\quad=\quad \labellist\small
\pinlabel {$i$} at 165 410
\pinlabel {$j$} at 275 410
\endlabellist\aijannulus \ .\\[4mm]
\ec
Set \[X_k:=\sum_{i=0}^{k-1} A_{i,j}\in\cc\]

Aiston \cite{Ais96} showed that $X_k$ represents a multiple of the $k$th power sum when interpreting $\cc $ in terms of symmetric functions. Further work by Aiston, Morton and subsequent collaborators has identified other nice algebraic and skein theoretic properties of $X_k$ and more particularly the exact power sum $P_k:=(s-s^{-1})/(s^k-s^{-k}) X_k$, which we shall also use in this paper.

\begin{definition}\label{def_px}
For $k > 0$ we define $P_{0,k}$ to be the result of decorating the $(0,1)$ curve in $T^2$ by $P_k$ from $\cc$, or equivalently \[P_{0,k}=\frac{s-s^{-1}}{s^k-s^{-k}}\sum_{i=0}^{k-1} A_{i,j}\] after identifying the vertical edges as well in the square to give diagrams in $T^2$.  

To define $P_\xx$ in general write $\xx=k(m,n)$ with $m$ and $n$ coprime and $k>0$. Then \emph{decorate} the embedded $(m,n)$ curve by the element $P_k\in\cc$ from the skein of the annulus to specify $P_\xx$.
\end{definition}

\subsection{Decoration and framing}
The term \emph{decoration}   in our definition above  has been used widely by Morton and others in describing satellite invariants of framed knots and links  in manifolds. 

What is meant by \emph{decorating} a framed oriented curve $K$ in $M$ by a framed curve $Q$ in $S^1\times I$  is to embed the thickened annulus $S^1\times I$ on the neighbourhood of $K$, respecting orientation \emph{and} framing. The image of $Q$ in $M$ is   the satellite of $K$ with pattern $Q$, and is referred to as $K$ \emph{decorated by} $Q$. It carries a framing which is determined by the framing of $Q$.

The embedding from $S^1\times I$ to $M$ depends on the framing and orientation of $K$.  It induces a linear map of skeins from $\cc= H(S^1\times I)$ to $H(M)$, with image denoted by $\cc_K$.  

In the case where $M=F\times I$ and $K$ is an \emph{embedded} curve in the surface $F$, which is framed by its neighbourhood in $F$, the induced linear map is an algebra homomorphism.  The image $\cc_K$ is then a subalgebra of $H(F)$.

When $F=T^2$ there is an embedded curve $P_\xx$ for each \emph{primitive} $\xx\in\Z^2$. We have  defined $P_{k\xx}$ above to be the image of $P_k\in\cc$ under the homomorphism $\cc\to H$ determined by the framed embedded curve $P_\xx$.  

\begin{remark}
When $k=1$ the result $P_\xx$ is just the oriented $\xx = (m,n)$ curve, as defined earlier. For $P_{0,n}$ with $n<0$ we decorate the $(0,-1)$ curve by $P_{-n}$. Also, since $SL(2,\Z)$ acts on $T^2$ preserving the framing annuli of embedded curves we see that its action on $H$ permutes the elements $P_\xx$.
\end{remark}
 
\subsection{Hecke algebras of type $A$}
There is a family of algebras modelled by the skein of the square with diagrams meeting the horizontal edges in matching pairs, but not meeting the vertical edges.  Where there are $n$ meeting points on each edge, all oriented upwards, this skein models the Hecke algebra $H_n$ as in \cite{MT90} and \cite{AM98}. With $n$ upwardly  and $p$ downwardly oriented meeting points the skein is the mixed algebra $H(n,p)$ in \cite{KM93} and in \cite{MH02}.  In both cases the algebra product comes from placing one square above another.
  
 \subsection{Affine Hecke algebras of type $A$}
 
Use the square with  the vertical edges identified, to give a cylinder, and take diagrams meeting the horizontal edges in $n$ matching pairs, oriented upwards, to give an algebra closely related to the affine Hecke algebra $\dot{H}_n$. 

A description of the affine Hecke algebra in this way is given by Graham and Lehrer \cite{GL03}. They restrict the diagrams to be braids in the cylinder, without closed curves, and with all strings running monotonically in the vertical direction. They then have  no need for the parameter $v$, nor the framing change relation.  In their work the regular elementary braids $\sigma_i$ represent the elements conventionally called $T_i$ in presentations of $\dot{H}_n$. They give cylindrical braid representatives for the commuting elements in $\dot{H}_n$ known as $X_i$, and for the element $\tau$ which realizes a $1/n$ turn around the cylinder.

 Because we are using $X_i$ in this paper in a different context, following the terminology of \cite{MM08}, we 
 refer here to   Graham and Lehrer's braids in red. Their elements $\r{X_i}, i=1,\ldots,n$ are
shown here.
\begin{figure}[ht]\label{Murphycylinder}
\bc $\r{X_i}\quad= \quad $\labellist\small
\pinlabel {$i$} at 210 335
\endlabellist \Murphycylinder\\[4mm]
\ec
\caption{The commuting elements $\r{X_i}$ in the affine Hecke algebra $\dot{H}_n$.}
\end{figure}

Symmetric functions in these are central elements in $\dot{H}_n$. In this context, it is  worth extending to the full skein of the cylinder without restricting to the use of braids. In the full skein there is a nice model of the sum $\sum \r{X_i}$ as a sort of commutator of the identity braid with the closed curve $P_{1,0}$. The exact result is the equation
\[\backid\quad - \quad \frontid\quad =\quad (s-s^{-1})\sum_{i=1}^n \r{X_i }\ ,\] proved by applying the switch and smooth relations on the $n$ crossing points of the first diagram in sequence.  Both diagrams on the left hand side are then clearly central in the full skein.

\begin{remark} The full skein of the cylinder with $n=1$ is used by Lukac \cite{Luk01}, and by Morton and Hadji \cite{HM06}, under the name $\mathcal A$, in proving results about $\cc$.
One relevant result from \cite{MM08} is that by using $P_{k,0}$ in place of $P_{1,0}$ the commutator when $n=1$ is $s^k-s^{-k}$ times the cylindrical $1$-braid going $k$ times around the cylinder. This is illustrated below, with the cylinder drawn as an annulus, in Equation (\ref{cylindercommutator}). It leads immediately to the representation of $ (s^k-s^{-k})\sum_{i=1}^n \r{X_i}^k$ for every $n>1$.

Very close analogues of $\r{X_i}$ in $H_n$ are used in \cite{Mor02, Mor02b} where they represent the Murphy operators. The notation $T_i$ is used in that paper, where a similar construction gives a quick proof that  symmetric polynomials in the commuting elements $T_i$ are central in $H_n$.
\end{remark}

 It is also worth considering the algebras $\dot{H}_{n,p}$ where the vertical edges are again identified to give a cylinder, and diagrams meet the horizontal edges in $n$ upward and $p$ downward matched pairs of points. 
  By work of Turaev \cite{Tur88} the elements $X_m$ generate $H(S^1\times [0,1])$ as an algebra over $R$.

\section{A presentation for $H$}\label{sec_relations}
In this section we give a presentation of $H$ using the elements $P_\xx$ of Definition \ref{def_px}.

\begin{lemma}
 The algebra $H$ is generated by the elements $P_{\xx}$.
\end{lemma}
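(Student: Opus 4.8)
The plan is to show that the subalgebra $S \subseteq H$ generated by the elements $P_\xx$ is all of $H$. Since $H$ is spanned over $R$ by diagrams in $T^2$, taken modulo the moves $R_{II}, R_{III}$ and the skein relations, it suffices to prove that every such diagram $D$ lies in $S$. I would carry this out in two stages: first reduce $D$ to an $R$-linear combination of \emph{crossingless} diagrams, and then identify each crossingless diagram with an element of $S$.

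The second stage is where Turaev's theorem enters, and it is the cleaner half. A crossingless diagram is an embedded $1$-manifold in $T^2$, i.e.\ a disjoint union of simple closed curves. An innermost null-homotopic component bounds a disk, so (after the framing relations remove any curls) it can be deleted at the cost of multiplying by the unknot scalar $\delta \in R$. Once all null-homotopic components are stripped off, the remaining curves are essential and pairwise disjoint; on $T^2$ this forces them to be mutually parallel to a single primitive class $\xx$, since essential curves of distinct slopes $\xx,\yy$ meet in $\lvert \det[\xx\ \yy]\rvert > 0$ points. A family of parallel copies of $\xx$ lies inside an embedded annular neighborhood whose core is the $\xx$-curve, and is therefore the image under the decoration homomorphism $\cc \to H$ attached to the embedded curve $P_\xx$ of some element of the annulus skein $\cc = H(S^1 \times I)$. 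By Turaev's theorem $\cc$ is generated by the $X_k$, equivalently by the power sums $P_k = \frac{s-s^{-1}}{s^k-s^{-k}}X_k$, so this element is a polynomial in the $P_k$; its image is the corresponding polynomial in the $P_{k\xx}$, which lies in $\cc_{P_\xx} \subseteq S$. Hence every crossingless diagram lies in $S$.

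The first stage, reducing an arbitrary diagram to crossingless ones, is the main obstacle. The switch-and-smooth relation expresses a crossing as the switched crossing plus $(s-s^{-1})$ times the oriented smoothing, and only the smoothing term has strictly fewer crossings; the switched term has the \emph{same} crossing number, so a naive induction on crossing number does not close. I would repair this by using the extra $I$-direction of $T^2 \times I$: after putting the underlying link in general position, one reorders the heights of the strands, and each crossing change needed to realize such a reordering replaces the diagram, modulo a term with strictly fewer crossings, by a diagram that is ambient isotopic in $T^2 \times I$. By induction the lower-order correction terms lie in $S$, so one may assume the diagram realizes the minimal geometric intersection number in its isotopy class; the remaining, genuinely essential, intersections are then smoothed, each smoothing strictly lowering the crossing number, and induction terminates at a crossingless diagram. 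The delicate point is verifying that the height-reordering isotopies together with the smoothing corrections really do drive the crossing number down; alternatively, one can bypass the hands-on reduction entirely by invoking Przytycki's structure theorem \cite{Prz92}, which furnishes a spanning set of $H$ consisting precisely of crossingless multicurve diagrams, to which the second stage applies directly.
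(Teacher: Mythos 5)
Your stage 2 is sound, and your instinct to rest everything on \cite{Prz92} and \cite{Tur88} is exactly what the paper does; but stage 1 contains a genuine gap, and it is not merely the ``delicate point'' you flag --- the goal itself is unattainable: the Homflypt skein $H$ is \emph{not} spanned by crossingless diagrams. Concretely, look at the graded piece $H_{0,2}$. A crossingless diagram in this degree consists, after deleting null-homotopic circles (scalars), of disjoint curves all parallel to the unoriented $(0,1)$ class, so its class lies in the span of the monomials $P_{0,1}^a P_{0,-1}^b$. But $P_{0,2}$, the $(0,1)$ curve decorated by $P_2$, is not in that span: in the annulus skein the degree-two part of $\cc^+$ contains both $P_1^2$ (two parallel cores, crossingless) and $P_2$, a multiple of $X_2 = A_{0,1}+A_{1,0}$ (closed $2$-braids, each carrying one unavoidable essential crossing), and these are linearly independent over $R$; Przytycki's freeness theorem --- the one invoked in Corollary \ref{cor_basis} --- keeps their images independent in $H$. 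This is also exactly why your crossing-number induction cannot close: for a knot winding twice around the annulus, the skein relation only yields $A_{1,0}-A_{0,1}=\pm(s-s^{-1})P_1^2$, i.e.\ switching the essential crossing changes the class by a crossingless correction but never removes the crossing, and $D_+$, $D_-$ are genuinely distinct skein elements (their blackboard framings already differ). Reduction to crossingless multicurves is the \emph{Kauffman bracket} story, where the relation resolves a crossing into two diagrams with strictly fewer crossings; the Homflypt relation only rewrites the difference $D_+-D_-$, and no amount of height-reordering repairs this.

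The same misconception infects your fallback, which misquotes \cite{Prz92}: his spanning set is not a set of crossingless diagrams, but the set of ordered products $B(w_1)\cdots B(w_n)$ of \emph{knots}, where $B_w$ for $w$ with $d=\gcd$ is a connected closed-braid pattern winding $d$ times (hence with crossings for $d\ge 2$) inserted in an annular neighborhood of the primitive $w/d$ curve. The repair is immediate and recovers the paper's actual proof: each factor $B_w$ lies by construction in the image of the decoration homomorphism $\cc \to H$ along a primitive curve $\xx$; by Turaev \cite{Tur88} the annulus skein is generated over $R$ by the $X_m$, hence by the $P_m=\frac{s-s^{-1}}{s^m-s^{-m}}X_m$; and the decoration map sends $P_m$ to the generator $P_{m\xx}$. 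So every factor, hence every spanning element, lies in the subalgebra generated by the $P_\xx$. With this correction your stage 2 becomes superfluous as a separate step, though as a statement about honestly crossingless diagrams it is correct --- provided you note that disjoint essential curves on $T^2$ are parallel only as unoriented curves, so mixed orientations occur; this is harmless since the $P_{k\xx}$ for all $\xx\in\Z^2\setminus\{0\}$ are among the generators.
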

\begin{proof}
Any element of $H$ can be reduced to a sum of products of knots using the skein relations. Then a knot in $H$ is in some graded piece $H_{j,k}$, and it follows from \cite{Prz92} that the skein module of the annulus surjects onto any particular graded piece $H_{j,k}$ by embedding the annulus onto a neighborhood of the $(j/d, k/d)$ curve (where $d = gcd(j,k)$). The claim then follows from the fact that the $X_m$ generate the skein module of the annulus, which was proved by Turaev in \cite{Tur88}, and the fact that over $R$ the $X_m$ can be written in terms of the $P_\xx$.
\end{proof}

\begin{theorem}\label{thm_commutationrelations}
The elements $P_\xx$ for $\xx \in \Z^2$ satisfy the following commutation relation:
\begin{equation}\label{formula_allrelations}
  [P_\xx,P_\yy] = \{\mathrm{det}[\xx\, \yy]\} P_{\xx+\yy}
 \end{equation}
\end{theorem}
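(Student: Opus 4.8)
The plan is to induct on the nonnegative integer $d = \lvert\det[\xx\,\yy]\rvert$, exploiting two symmetries that make the statement depend only on the $\SL_2(\Z)$-orbit of the ordered pair $(\xx,\yy)$. First, since $[P_\yy,P_\xx] = -[P_\xx,P_\yy]$, $\{-d\} = -\{d\}$, and $P_{\xx+\yy} = P_{\yy+\xx}$, the relation for $(\xx,\yy)$ is equivalent to the one for $(\yy,\xx)$, so I may assume $d = \det[\xx\,\yy] \ge 0$. Second, the mapping class group action provides algebra automorphisms $g$ of $H$ with $g(P_\xx) = P_{g\xx}$ for $g \in \SL_2(\Z)$; since $\det[g\xx\,g\yy] = \det[\xx\,\yy]$, applying $g$ shows that (\ref{formula_allrelations}) holds for $(\xx,\yy)$ if and only if it holds for $(g\xx,g\yy)$. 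I will use this freely to put pairs into a convenient normal form.

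The base cases are $d \le 1$ together with two special families. When $d = 0$ the curves $\xx$ and $\yy$ are parallel and commute inside a common annular neighbourhood (the skein $\cc$ of the annulus is commutative), while $\{0\}=0$, so both sides vanish. When $d = 1$ the pair is a basis of $\Z^2$; normalizing to $((1,0),(0,1))$ reduces the claim to the single switch-and-smooth computation $[P_{(1,0)},P_{(0,1)}] = \{1\}P_{(1,1)}$ recorded in Section~\ref{sec_diagrams}. The two remaining base families are
\begin{equation*}
 [P_{(m,0)},P_{(0,1)}] = \{m\}P_{(m,1)}, \qquad [P_{(0,-1)},P_{(m,1)}] = \{m\}P_{(m,0)};
\end{equation*}
the first follows from the result of \cite{MM08} quoted in the introduction, and the second from a direct skein manipulation using Aiston's representation of $P_m$.

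For the inductive step, assume $d \ge 2$ and that (\ref{formula_allrelations}) holds for all pairs of strictly smaller determinant. Normalizing $\yy$ to $(0,1)$ and reducing the first coordinate of $\xx$ modulo $d$, I may take $\xx = (d,b)$ with $0 \le b < d$. If $b=0$ this is the first base family. If $1 \le b \le d-2$, then $\yy$ is primitive, so I may write $\yy = \mathbf{u} + \mathbf{v}$ with $\det[\mathbf{u}\,\mathbf{v}] = 1$, whence $P_\yy = \{1\}^{-1}[P_{\mathbf{u}},P_{\mathbf{v}}]$ by the determinant-one base case. The Jacobi identity then gives
\begin{equation*}
 [P_\xx,P_\yy] = \{1\}^{-1}\big([[P_\xx,P_{\mathbf{u}}],P_{\mathbf{v}}] + [P_{\mathbf{u}},[P_\xx,P_{\mathbf{v}}]]\big),
\end{equation*}
and choosing $\mathbf{u}$ so that $d_1 := \det[\xx\,\mathbf{u}]$ lies in $(0,d)$ forces $d_2 := \det[\xx\,\mathbf{v}] = d - d_1$ into $(0,d)$ as well. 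The induction hypothesis evaluates the two inner brackets, and the hypothesis $1 \le b \le d-2$ guarantees $d_1 \ge 2$, which is precisely what keeps the resulting outer brackets, of determinants $d_2 + 1 = \det[(\xx+\mathbf{u})\,\mathbf{v}]$ and $1 - d_1 = \det[\mathbf{u}\,(\xx+\mathbf{v})]$, of absolute value $< d$; a second application of the induction hypothesis finishes this case. The only leftover value is $b = d-1$, where $\xx = (d,d-1)$ and $\yy = (0,1)$ are both primitive with $\xx+\yy = d\cdot(1,1)$; an explicit element of $\SL_2(\Z)$ carries this pair to $((0,-1),(d,1))$, so it is covered by the second base family. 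Finally, pairs in which \emph{both} $\xx$ and $\yy$ fail to be primitive are reduced by the same Jacobi mechanism, now writing the non-primitive vector itself as a commutator $[P_{\mathbf{u}},P_{\mathbf{v}}]$ of two primitive classes summing to it, as permitted by the second base family.

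I expect the genuine obstacle to be twofold. The analytic heart is the second base family $[P_{(0,-1)},P_{(m,1)}] = \{m\}P_{(m,0)}$: unwinding Aiston's explicit closed-braid representation of $P_m$ and checking that the skein manipulation produces \emph{precisely} the decorated curve $P_{(m,0)}$, rather than some other element of $H_{(m,0)}$, is where the real work lies. The remaining difficulty is purely bookkeeping: one must verify that in every case the chosen decomposition makes all determinants strictly decrease, and confirm that the finitely many boundary cases where a naive decomposition fails to reduce are exactly the two base families up to the explicit $\SL_2(\Z)$ symmetry. Neither step is conceptually deep, but the determinant bookkeeping must be organized carefully so that the induction genuinely closes.
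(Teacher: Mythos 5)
Your skeleton is the same as the paper's: the identical base relations (the paper's equations (\ref{formula_somerelations}), obtained from the same skein-theoretic inputs, namely \cite{MM08} and Aiston's representation of $P_m$), $\SL_2(\Z)$-equivariance, and an induction on $\lvert\det[\xx\,\yy]\rvert$ driven by the Jacobi identity together with the telescoping identity $\{m\}\{n\}=\{m+n\}^+-\{m-n\}^+$. Your treatment of the case where $\yy$ is primitive is correct and in fact marginally cleaner than the paper's: after normalizing $\yy=(0,1)$ and $\xx=(d,b)$, the unique unimodular splitting $\yy=(1,1)+(-1,0)$ with $d_1\in(0,d)$ gives $d_1=d-b$, $d_2=b$, the outer determinants $b+1$ and $b+1-d$ have absolute value $<d$ precisely when $1\le b\le d-2$, and the leftover values $b=0$ and $b=d-1$ are exactly your two base families. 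This replaces the paper's Diophantine Lemma \ref{lemma_diophantine} (applied to $\xx$ rather than $\yy$) and most of Case 1 of Proposition \ref{prop_allfromsome}.

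The genuine gap is your final sentence, where both $\xx$ and $\yy$ are imprimitive. Your claim that the only configurations resisting a strictly determinant-decreasing decomposition are the two base families is false: for such pairs \emph{no} decomposition with strict decrease exists at all. Take $\xx=(2,0)$, $\yy=(0,2)$, $d=4$. Any splitting $\yy=\mathbf{u}+\mathbf{v}$ with $e:=\det[\mathbf{u}\,\mathbf{v}]\neq0$ has $\mathbf{u}=(a,t)$, $\mathbf{v}=(-a,2-t)$, so $e=2a$; requiring $\lvert e\rvert$, $\lvert d_1\rvert$, $\lvert d-d_1\rvert$ all $<4$ forces $a=\pm1$, $t=1$, $d_1=d_2=2$, and then one of the outer determinants $d_2+e$, $e-d_1$ equals $\pm4$. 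By symmetry the same holds if you split $\xx$ instead, and your own suggested decomposition $P_{(0,2)}=\{2\}^{-1}[P_{(1,0)},P_{(-1,2)}]$ fares no better, since the inner bracket $[P_{(2,0)},P_{(-1,2)}]$ already has determinant $4$. So the induction as you have organized it does not close. This is precisely why the paper's proof contains the fiddly Subcases 1b--1c and Case 2: there, one of the pairs fed to Lemma \ref{lemma_trueforab} (e.g.\ $(\yy,\bb)=((0,r),(p,1))$ in Case 2) has $\lvert\det\rvert$ equal to $d$ itself, and is admissible only because, within each fixed determinant level, pairs of a type settled earlier in the case analysis (in particular pairs containing a primitive vector) are proved first. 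Your argument can be repaired the same way --- at level $d$, prove the primitive-$\yy$ tier first, then reduce an imprimitive pair to a same-level pair with a primitive member (for the example above, splitting $\yy=(1,1)+(-1,1)$ lands on the already-proven pair $((3,1),(-1,1))$ of determinant $4$) --- but this two-tier ordering is a necessary additional idea, not the routine bookkeeping you describe, and the resolution you assert is the wrong one. (Two small slips: it is the \emph{second} coordinate of $\xx$ that the stabilizer of $(0,1)$ reduces modulo $d$; and with only the swap $(\xx,\yy)\mapsto(\yy,\xx)$ and $\SL_2(\Z)$ you cannot always place the primitive vector in the $\yy$ slot while keeping $d\ge0$ --- you need the $\GL_2(\Z)$ anti-automorphism of Remark \ref{remark_gl2z}, or mirrored versions of your base families.)
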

\begin{proof}
We  separate this proof into two subsections. In Section \ref{proof_ofsomerelations} we prove the following relations   using methods and results of \cite{MM08}:
\begin{align}
  \,[P_{k\xx}, P_{j\xx}] &= 0\notag\\
  \,[P_{1,0}, P_{-1,k}] &= \{k\} P_{0,k}\label{formula_somerelations}\\
  \,[P_{1,0}, P_{0,k}] &= \{k\} P_{1,k}\notag
 \end{align}
Then in Section \ref{proof_allrelations} we show that the relations (\ref{formula_somerelations}) imply (\ref{formula_allrelations}).  
\end{proof}

Before we prove this theorem, we use a theorem in \cite{Prz92} to show that this gives a presentation of $H$. More precisely, let $A$ be the abstract algebra generated by $P_{j,k}$ subject to the relations in Theorem \ref{thm_commutationrelations}. 

\begin{corollary}\label{cor_basis}
The natural surjection $A \twoheadrightarrow H$ is an isomorphism.
\end{corollary}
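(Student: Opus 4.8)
The goal is to prove that the natural surjection $A \twoheadrightarrow H$ is an isomorphism, where $A$ is the abstract algebra on generators $P_{j,k}$ with the commutation relations of Theorem \ref{thm_commutationrelations}. Since surjectivity is already known (the Lemma shows the $P_\xx$ generate $H$, and $A$ maps onto them compatibly with the relations), the entire content is injectivity. The plan is to produce a spanning set for $A$ whose image in $H$ is known to be linearly independent, and to show this spanning set has the same cardinality in both algebras.

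First I would use the defining relations of $A$ to put every element into a normal form. The key observation is that the relation $[P_\xx, P_\yy] = \{\det[\xx\,\yy]\}\,P_{\xx+\yy}$, together with the commutativity relation $[P_{k\xx}, P_{j\xx}] = 0$ along each ray through the origin, lets one reorder any product of generators at the cost of lower terms. Concretely, I would fix a total order on $\Z^2 \setminus \{0\}$ (for instance, order primitive directions by angle, and within a single ray order by the scalar multiple), and show by the usual straightening/rewriting argument that monomials $P_{\xx_1} P_{\xx_2} \cdots P_{\xx_r}$ with $\xx_1 \le \xx_2 \le \cdots \le \xx_r$ span $A$ over $R$. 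This is the analogue of a PBW-type basis: the commutator relation always replaces a badly ordered adjacent pair by an ordered term plus a strictly shorter monomial, so a termination/induction argument on word length and disorder gives the spanning statement. Since generators along a common ray commute, this normal form can be repackaged as products indexed by a choice, for each primitive direction, of a monomial in the commuting family $\{P_{k\xx}\}_k$, i.e.\ by a symmetric-function datum in each direction.

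Next I would invoke Przytycki's theorem \cite{Prz92} on the target side. The grading of $H$ by $\Z^2 = H_1(T^2)$ reduces everything to a single graded piece $H_{\xx}$ at a time. For $\xx \neq 0$ with $d = \gcd$, Przytycki's result identifies $H_{\xx}$ with the skein of the annulus embedded along the $(\xx/d)$-curve, which in turn is a free $R$-module with basis indexed by partitions of $d$ (equivalently, by the monomials in the $P_{k}$'s, the power-sum/symmetric-function basis). The degree-zero piece $H_0$ is spanned by products of elements coming from distinct primitive directions and is likewise controlled. I would then check that the ordered monomials produced in $A$ map bijectively onto exactly this partition-indexed basis in each graded piece: an ordered monomial of total degree $\xx$ lying in a single ray maps to a symmetric-function element of the annulus skein of the right degree, and these are independent by Przytycki.

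The main obstacle, and the step deserving the most care, is the bookkeeping at degree $0$ and the verification that no collapsing occurs there: a priori an ordered monomial involving several distinct directions could, under further application of the relations, reduce to something supported on fewer directions, and one must confirm the spanning monomials really are linearly independent in $H_0$ rather than merely spanning. I expect to handle this by using the $\SL_2(\Z)$-action together with the multiplicative grading structure and the known commutativity of the ``horizontal'' and ``vertical'' subalgebras to reduce independence in $H_0$ to independence of partition-indexed elements in annulus skeins along each ray, again citing \cite{Prz92} and \cite{Tur88}. Once the ordered monomials are shown to span $A$ and to map to a genuine $R$-basis of $H$, the surjection $A \twoheadrightarrow H$ carries a spanning set to a linearly independent set, forcing it to be a bijection on this set and hence an isomorphism of $R$-algebras.
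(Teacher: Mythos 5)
Your overall skeleton --- straighten products in $A$ into angle-ordered monomials using the commutation relations, then match them against a Przytycki-style basis of $H$ --- is exactly the paper's strategy, and your straightening/spanning step for $A$ is fine. The genuine gap is in the independence step, and it is larger than the degree-$0$ bookkeeping you flag. You assert that Przytycki's result identifies the graded piece $H_\xx$ with the skein of the annulus embedded along the $(\xx/d)$-curve, with basis indexed by partitions of $d$. That is not what his theorem says, and as stated it is false: the graded piece $H_\xx$ contains ordered monomials mixing several distinct primitive directions whose homology classes merely sum to $\xx$ (for instance $P_{2,1}P_{-1,-1}$ lies in $H_{1,0}$), so mixed-direction monomials occur in \emph{every} degree, not only in $H_0$. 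Consequently your independence check (``an ordered monomial of total degree $\xx$ lying in a single ray maps to a symmetric-function element of the annulus skein, and these are independent'') covers only a small part of your spanning set. Your proposed repair for the remainder --- reducing cross-ray independence to single-ray independence via the $\SL_2(\Z)$-action and the commuting horizontal/vertical subalgebras --- would not succeed: $\SL_2(\Z)$ permutes rays but cannot separate products drawn from several rays, and linear independence within each ray does not imply linear independence of products across rays (this is precisely the nontrivial content one needs, not a formal consequence of the ray-by-ray statements).

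The missing ingredient is the full, global form of \cite[Thm.\ 6.2]{Prz92}, which is what the paper actually invokes: Przytycki constructs a map from the symmetric algebra $\mathrm{Sym}(R\pi^0)$ on $\pi^0 = \Z^2\setminus\{0\}$ to $H$, sending an ordered monomial $w_1\cdots w_n$ (ordered by angle, mixed directions allowed) to the product $B(w_1)\cdots B(w_n)$ of decorated curves, and proves this map is an $R$-module isomorphism. In other words, \emph{all} angle-ordered products, across arbitrarily many rays and in every graded degree simultaneously, form an $R$-basis of $H$; there is nothing left to verify in $H_0$ or anywhere else. Combining this with the observation that on each ray the change of basis between the elements $B(cw)$ (built from the $A_{k,0}$'s) and monomials in the $P_{cw}$ (power sums) is triangular and invertible over $R$, your ordered $P$-monomials do map to an $R$-basis of $H$, and the argument then closes exactly as in your final paragraph: the surjection $A \twoheadrightarrow H$ carries a spanning set to a basis, hence is an isomorphism. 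So the fix is not a new argument but replacing your per-graded-piece annulus statement by the correct global theorem of Przytycki.
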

\begin{proof}
We begin by recalling a basis for $H$ constructed in \cite{Prz92}. We pick a linear ordering $\leq$ of elements in $\pi^0 := \Z^2 \setminus \{0\}$ according to the angle with the positive $x$-axis. Then for each $w = (m,n) \in \pi^0$ we pick a representative diagram $B_w$, which is defined to be the curve $A_{k,0}$ (defined in Section \ref{sec_annulus}) inserted on the $(m/k,n/k)$ curve on the torus, where $k = gcd(m,n)$. If  $Sym(\pi^0)$ is the symmetric algebra of $R\pi^0$, then Przytycki defines a map $Sym(\pi^0) \to H$ which takes a monomial $w_1 \cdots w_n$ to the product $B(w_1)B(w_2)\cdots B(w_n)$, where $w_1 \leq w_2 \cdots \leq w_n$. Then \cite[Thm.\ 6.2]{Prz92} states that this is an $R$-isomorphism.

We then note that over $R$, the diagram $B(w)$ can be written uniquely in terms of diagrams $P_{c w}$ for $c \in \N$. Then the commutation relations in Theorem \ref{thm_commutationrelations} allow one to order the $P_\xx$ according to the angle between $\xx$ and the positive $x$-axis. This completes the proof of the corollary.
\end{proof}

\subsection{Certain commutation relations}
\label{proof_ofsomerelations}
\begin{proof}  

The first equation of (\ref{formula_somerelations}) is obvious because the two elements in question lie in parallel annuli.

To prove the second equation, we work with  diagrams in the square with edges identified, as discussed above. The commutator $[P_{1,0}, P_{-1,k}]$ is  represented by the torus diagrams $$ \back\quad-\quad\front \ .$$  

 We introduce intermediate torus diagrams $C_{j,k-j}, j=0,\ldots.k$, in which the first $j$ crossings on the $(1,0)$ curve in $$\back$$ are switched from over to under. 

Thus   
\bc
\labellist\small
\pinlabel {$j$} at 140 315
\pinlabel {$k-j$} at 240 315
\endlabellist $C_{j,k-j}\quad =\quad $ \Cij\ .\\[6mm]\ec

Then  $C_{0,k}=P_{1,0} P_{-1,k}$  and $C_{k,0}=P_{-1,k}P_{1,0} $.

The skein relation applied at the circled crossing on $C_{j,k-j}$  gives
$$C_{j,k-j}-C_{j+1,k-j-i} = (s-s^{-1}) D_{j,k-j-1}.$$  
Here 
\bc
\labellist\small
\pinlabel {$j$} at 142 325
\pinlabel {$i$} at 250 325
\endlabellist $D_{j,i}\quad =\quad$ \smoothedtorus\\[6mm]\ec 
is the smoothed diagram with $j$ crossings on the string from the left-hand vertical edge  and $i$ crossings on the string to the right-hand vertical edge.

Then the commutator $[P_{1,0}, P_{-1,k}] $ is represented by
\begin{eqnarray*} C_{0,k}-C_{k,0} &=& \sum_{j=0}^{k-1} (C_{j,k-j}-C_{j+1,k-j-1})\\
&=& (s-s^{-1}) \sum_{j=0}^{k-1} D_{j,k-j-1}.
\end{eqnarray*}

Now the diagram $D_{j,i}$   in $T^2$ can be isotoped by moving $j$ crossings horizontally to the left to get an equivalent diagram which does not meet the vertical edge. The result is  the closed braid diagram \bc
\labellist\small
\pinlabel {$i$} at 162 420
\pinlabel {$j$} at 270 420
\endlabellist \aijtorus \ .\\[5mm] \ec This is the element $A_{i,j}\in\cc$   described earlier, running in the direction of the $(0,1)$ curve on $T^2$. This establishes that $[P_{1,0}, P_{-1,k}]$ is represented by the sum of closed braids.$$(s-s^{-1})\sum_{i+j=k-1} A_{i,j}$$ following the $(0,1)$ curve in $T^2$.

Now in \cite{Mor02}  it is shown that $$\sum_{i+j=k-1} A_{i,j}= (s^k-s^{-k})/(s-s^{-1}) P_k,$$  and the same result is repeated in \cite{MM08}, when the closed braids $A_{i,j}$ are represented as the closure of different but
conjugate braids. The element $P_k$ decorating the $(0,1)$ curve in $T^2$ represents $P_{0,k}$ in our current notation, and this proves the equation
 $$[P_{1,0}, P_{-1,k}] =\{k\}P_{0,k}.$$
 \\[2mm]
 
To prove the final equation of (\ref{formula_somerelations}), we first note that a diagram on $T^2$ cut open along a $(0,1)$ curve gives a diagram in the annulus with some matched point on the two boundary curves. The product $P_{1,0} P_{0,k}$ can thus be represented  by the diagram
 
 \bc
{\labellist\small
\pinlabel {$P_k$} at 60 140
\endlabellist}  \Ahe \ec in the annulus with one input on the left boundary and one output on the right boundary. The commutator $[P_{1,0},P_{0,k}]$ is then represented in the annulus by
 \bc {\labellist\small
\pinlabel {$P_k$} at 60 140
\endlabellist} \Ahe\ $\  -\ $ {\labellist\small
\pinlabel {$P_k$} at 60 140
\endlabellist}  \Aeh\ec

 We then apply directly Theorem 4.2 of \cite{Mor02b} which gives an equation in the skein of the annulus with one  point on each boundary curve. In diagrammatic form, Theorem 4.2 of that paper shows that
 
\begin{equation} {\labellist\small
\pinlabel {$P_k$} at 60 140
\endlabellist} \Ahe\ - {\labellist\small
\pinlabel {$P_k$} at 60 140
\endlabellist}  \Aeh\ = (s^k-s^{-k})\  \Aaa\ .\label{cylindercommutator}\end{equation}
 
The curve in the right-hand diagram circles the annulus $k$ times, here shown in the case $k=2$.  When the annulus  follows the $(0,1)$ curve in $T^2$, and its boundary curves are rejoined to form $T^2$ the curve in the right-hand diagram becomes the $(1,k)$ curve in $T^2$  while the other two diagrams yield $P_{1,0} P_{0,k}$ and $P_{0,k} P_{1,0}$ respectively. This leads immediately to the equation
 $$[P_{1,0},P_{0,k}]=\{k\}P_{1,k}.$$

\end{proof}

\subsection{All commutation relations}
\label{proof_allrelations}
In this section we prove Proposition \ref{prop_allfromsome}, which shows that the equations (\ref{formula_allrelations}) follow from equations (\ref{formula_somerelations}). In what follows, we write $d(\xx, \yy) = \det\left[\xx\,\, \yy\right]$ for $\xx, \yy \in \Z^2$ and $d(\xx) = gcd(m,n)$ when $\xx = (m,n)$. We will also use the following terminology: 
\[
(\xx,\yy) \in \Z^2 \times \Z^2 \textrm{ \emph{satisfies}  (\ref{formula_allrelations}) if  } [P_\xx,P_\yy] = \{d(\xx,\yy)\}P_{\xx+\yy}
\]

The idea of the proof is to induct on the determinant of the matrix with columns $\xx$ and $\yy$. To induct, we write $\xx = \aab + \bb$ for carefully chosen vectors $\aab, \bb$ and then use the following lemma.

\begin{lemma}\label{lemma_trueforab}
 Assume $\aab + \bb = \xx$ and that $(\aab,\bb)$ satisfies (\ref{formula_allrelations}). Further assume that the four pairs of vectors $(\yy, \aab)$, $(\yy, \bb)$, $(\yy+\aab,\bb)$, and $(\yy+\bb,\aab)$ satisfy (\ref{formula_allrelations}). Then the pair $(\xx,\yy)$ satisfies (\ref{formula_allrelations}).
\end{lemma}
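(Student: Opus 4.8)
The plan is to exploit the fact that $H$ is an associative algebra, so that the Jacobi identity for commutators
\[
[[P_\aab,P_\bb],P_\yy] + [[P_\bb,P_\yy],P_\aab] + [[P_\yy,P_\aab],P_\bb] = 0
\]
holds automatically. The entire content of the five hypotheses is that every inner and outer commutator occurring here is one of the pairs assumed to satisfy (\ref{formula_allrelations}), so each of the three terms rewrites as an explicit scalar multiple of a single generator $P_\xx$, $P_{\xx+\yy}$. First I would record the consequences of the hypotheses: since $(\aab,\bb)$ satisfies (\ref{formula_allrelations}) and $\aab+\bb=\xx$, we have $[P_\aab,P_\bb]=\{d(\aab,\bb)\}P_\xx$, so the first term equals $\{d(\aab,\bb)\}\,[P_\xx,P_\yy]$, which contains exactly the commutator we wish to compute. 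The remaining two terms I would expand using $(\yy,\bb)$ together with $(\yy+\bb,\aab)$, and $(\yy,\aab)$ together with $(\yy+\aab,\bb)$; because $\yy+\bb+\aab = \yy+\aab+\bb = \xx+\yy$, both collapse into scalar multiples of $P_{\xx+\yy}$.

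Carrying this out, the Jacobi identity becomes
\[
\{d(\aab,\bb)\}\,[P_\xx,P_\yy] = \big(\{d(\yy,\bb)\}\{d(\yy+\bb,\aab)\} - \{d(\yy,\aab)\}\{d(\yy+\aab,\bb)\}\big)\,P_{\xx+\yy}.
\]
To finish I would reduce this to a purely combinatorial identity among quantum numbers. Writing $p = d(\yy,\aab)$, $q = d(\yy,\bb)$, $r = d(\aab,\bb)$, bilinearity and antisymmetry of the determinant give $d(\yy+\bb,\aab) = p-r$, $d(\yy+\aab,\bb) = q+r$, and $d(\xx,\yy) = -(p+q)$. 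Thus it remains to verify
\[
\{q\}\{p-r\} - \{p\}\{q+r\} = -\{r\}\{p+q\} = \{r\}\{-(p+q)\} = \{d(\aab,\bb)\}\{d(\xx,\yy)\}.
\]
Expanding each product via $\{a\}\{b\} = s^{a+b}-s^{a-b}-s^{-a+b}+s^{-a-b}$, four of the eight resulting monomials cancel in pairs, and the remaining four factor as $(s^{p+q}-s^{-(p+q)})(s^{-r}-s^{r}) = -\{p+q\}\{r\}$, which is the claim. Substituting back gives $\{d(\aab,\bb)\}\,[P_\xx,P_\yy] = \{d(\aab,\bb)\}\{d(\xx,\yy)\}\,P_{\xx+\yy}$.

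The last step is to cancel the factor $\{d(\aab,\bb)\}$, which yields precisely (\ref{formula_allrelations}) for the pair $(\xx,\yy)$. The one genuine subtlety, and the step I would expect to need care, is that this cancellation is legitimate only when $\{d(\aab,\bb)\}$ is a unit in $R$, i.e.\ when $d(\aab,\bb)\neq 0$; for $d\neq 0$ we have $\{d\} = \pm\{\lvert d\rvert\}$, which is invertible by the definition of $R$. This is exactly where the choice of decomposition $\xx = \aab+\bb$ in the surrounding induction enters, and I would note that the lemma is only applied with $\aab,\bb$ linearly independent. Everything else, namely the determinant identities and the eight-term cancellation, is routine bookkeeping once the Jacobi identity has been set up, so the algebraic heart of the argument is simply the observation that associativity forces the Jacobi relation and that the quantum-number identity matches it perfectly.
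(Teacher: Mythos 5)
Your proof is correct and follows essentially the same route as the paper's: both apply the Jacobi identity to convert the five hypotheses into $\{d(\aab,\bb)\}\,[P_\xx,P_\yy] = c\, P_{\xx+\yy}$ and then simplify the coefficient $c$ by a quantum-number identity (you expand $\{a\}\{b\}$ into monomials and cancel directly, while the paper uses $\{m\}\{n\} = \{m+n\}^+ - \{m-n\}^+$, a purely cosmetic difference). Your explicit observation that cancelling $\{d(\aab,\bb)\}$ requires $d(\aab,\bb)\neq 0$, so that $\{d(\aab,\bb)\}$ is a unit in $R$ (as holds in every application of the lemma within Proposition \ref{prop_allfromsome}), makes precise a step the paper leaves implicit.
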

\begin{proof}
 By the first assumption, we have $[P_\aab, P_\bb] = \{d(\aab, \bb)\}P_\xx$. We then use the Jacobi identity and the remaining assumptions to compute
 \begin{eqnarray*}
  -\{d(\aab, \bb)\}[P_\xx,P_\yy] &=& -[[P_\aab,P_\bb],P_\yy]\\
  &=& [[P_\yy,P_\aab],P_\bb] + [[P_\bb,P_\yy],P_\aab]\\
  &=& \{d(\yy,\aab)\}[P_{\yy+\aab},P_\bb] + \{d(\bb,\yy)\}[P_{\bb+\yy},P_\aab]\\
  &=& \Big(\{d(\yy,\aab)\}\{d(\yy+\aab,\bb)\} + \{d(\bb,\yy)\}\{d(\bb+\yy,\aab)\}\Big) P_{\xx+\yy}\\
  &=:& cP_{\xx+\yy}
 \end{eqnarray*}
We now use the identity $\{m\}\{n\} = \{m+n\}^+ - \{m-n\}^+$ to simplify the coefficient $c$:
\begin{eqnarray*}
 c &=& \{d(\yy,\aab)\}\{d(\yy+\aab,\bb)\} + \{d(\bb,\yy)\}\{d(\bb+\yy,\aab)\} \\
 &=& \{d(\yy,\aab) + d(\yy,\bb) + d(\aab,\bb)\}^+ - 
      \{d(\yy,\aab) - d(\yy,\bb) - d(\aab,\bb)\}^+\\ 
 &\,& + \{d(\bb,\yy) + d(\bb,\aab) + d(\yy,\aab)\}^+ - \{d(\bb,\yy)-d(\bb,\aab) - d(\yy,\aab)\}^+\\
 &=& \{d(\yy,\xx) + d(\aab,\bb)\}^+ - \{d(\xx,\yy) - d(\bb,\aab)\}^+\\
&=& \{-d(\xx,\yy) + d(\aab,\bb)\}^+ - \{d(\xx,\yy) + d(\aab,\bb)\}^+\\
 &=& -\{d(\aab,\bb)\}\{d(\xx,\yy)\}
\end{eqnarray*}
This completes the proof of the lemma.
\end{proof}

We next prove the following elementary lemma (which is a slight modification of \cite[Lemma 1]{FG00}). This lemma is used to make a careful choice of vectors $\aab, \bb$ so that the previous lemma can be applied.

\begin{lemma}\label{lemma_diophantine}
 Suppose $p,q \in \Z$ are relatively prime with $0 < q < p$ and $p > 1$. Then there exist $u,v,w,z \in \Z$ such that the following conditions hold:
 \begin{eqnarray}
  u + w &=& p,\quad v + z = q\notag\\
  0 < u, w &<& p\label{equation_conditionsonuvwz}\\
  uz - wv &=& 1\notag
 \end{eqnarray}
\end{lemma}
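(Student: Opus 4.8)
The plan is to reduce the three defining equations to a single linear Diophantine equation and then solve it using the invertibility of $q$ modulo $p$. First I would use the equations $u + w = p$ and $v + z = q$ to eliminate $w$ and $z$, writing $w = p - u$ and $z = q - v$. Substituting these into the determinant condition gives
$$uz - wv = u(q - v) - (p - u)v = uq - pv,$$
so the whole system is equivalent to finding a single pair $(u,v)$ with $uq - pv = 1$ and $0 < u < p$; the remaining requirement $0 < w < p$ is then automatic from $w = p - u$.

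Next I would solve $uq - pv = 1$. Since $p$ and $q$ are relatively prime, $q$ is a unit modulo $p$, so the congruence $uq \equiv 1 \pmod{p}$ has a unique solution $u$ in the set $\{1, 2, \dots, p-1\}$; it is nonzero modulo $p$ because $0 \cdot q \equiv 0 \not\equiv 1 \pmod{p}$, using $p > 1$. For this choice of $u$ the integer $v := (uq - 1)/p$ is well-defined, and $(u,v)$ solves $uq - pv = 1$ by construction. Setting $w := p - u$ and $z := q - v$ then yields a quadruple satisfying all of (\ref{equation_conditionsonuvwz}): the two sum conditions hold by the definitions of $w$ and $z$, the determinant condition $uz - wv = 1$ holds by the computation above, and $0 < u, w < p$ holds because $u \in \{1, \dots, p-1\}$.

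I do not expect a serious obstacle here. The only point requiring a little care is to guarantee the range $0 < u < p$ rather than merely producing some integer solution of the Bézout equation, and this is handled precisely by selecting $u$ to be the unique representative in $\{1,\dots,p-1\}$ of the modular inverse of $q$. I would note that the hypotheses $0 < q < p$ are not actually needed for existence (only $\gcd(p,q)=1$ and $p>1$ are used), but they are presumably the conditions under which the lemma is invoked later, where they serve to control the size of the determinant in the induction of Lemma \ref{lemma_trueforab}.
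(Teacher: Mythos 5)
Your proof is correct and takes essentially the same approach as the paper's: both reduce the problem to the B\'ezout equation $uq - pv = 1$ and normalize the solution so that $u$ lies in $\{1,\dots,p-1\}$ (the paper writes $bq - ap = 1$, shifts $b$ into $[0,p)$, and sets $u = b$, with $b = 0$ ruled out exactly as in your modular-inverse argument since $p > 1$). Your up-front elimination of $w$ and $z$, your explicit justification that $u \neq 0$, and your observation that $0 < q < p$ is not needed for existence are minor presentational refinements of the same argument.
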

\begin{proof}
 Since $p$ and $q$ are relatively prime, there exist $a,b \in \Z$ with $ bq - ap = 1$. This solution can be modified to give another solution $a' = a + q$ and $b' = b + p$, so we may assume $0 \leq b < p$. We then define 
 \[
  u=b,\quad v=a,\quad w=p-b,\quad z=q-a
 \]
 By definition, $u,v,w,z$ satisfy the first condition of (\ref{equation_conditionsonuvwz}), and the inequalities $0 \leq b < p$ and $p > 1$ imply the second condition. To finish the proof, we compute
 \[
  uz - wv = b(q-a) - a(p-b) = bq - ap = 1
 \]
\end{proof}

\begin{remark}\label{remark_gl2z}
 There is a natural $R$-linear anti-automorphism $\sigma:H \to H$ which ``flips $T^2 \times [0,1]$ across the $y$-axis and inverts $[0,1]$.'' In terms of the elements $P_{a,b}$, we have $\sigma(P_{a,b}) = P_{a,-b}$. We therefore have an \emph{a priori} action of $\mathrm{GL}_2(\Z)$ on $H$, where elements of determinant $1$ act by algebra automorphisms, and elements of determinant $-1$ act by algebra anti-automorphisms.
\end{remark}

\begin{proposition}\label{prop_allfromsome}
 Suppose $A$ is an algebra with elements $P_\xx$ for $\xx \in \Z^2$ that satisfy equations (\ref{formula_somerelations}). Furthermore, suppose that there is a $\GL_2(\Z)$ action by (anti-)automorphisms on $A$ as in Remark \ref{remark_gl2z}, and that the action of $\GL_2(\Z)$ is given by $\gamma(P_\xx) = P_{\gamma(\xx)}$  for $\gamma \in \GL_2(\Z)$.  Then the $P_\xx$ satisfy the equations (\ref{formula_allrelations}).
\end{proposition}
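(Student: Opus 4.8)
The plan is to derive (\ref{formula_allrelations}) for every pair $(\xx,\yy)$ from the three special relations (\ref{formula_somerelations}) by inducting on $p:=\lvert\det[\xx\ \yy]\rvert$, using the $\GL_2(\Z)$-action to normalize the pair and Lemma~\ref{lemma_trueforab} to run the inductive step. Before the induction I would record two symmetries of the property ``$(\xx,\yy)$ satisfies (\ref{formula_allrelations})''. It is antisymmetric, since interchanging $\xx$ and $\yy$ negates both $[P_\xx,P_\yy]$ and $\{\det[\xx\ \yy]\}$ (the latter because $\{-d\}=-\{d\}$) while fixing $P_{\xx+\yy}$. It is also $\GL_2(\Z)$-equivariant: an element of $\SL_2(\Z)$ acts by an algebra automorphism and preserves $\det$, while an element of determinant $-1$ acts by an anti-automorphism (Remark~\ref{remark_gl2z}), changing the sign of the bracket and of $\det[\xx\ \yy]$ simultaneously; in both cases $(\gamma\xx,\gamma\yy)$ satisfies (\ref{formula_allrelations}) iff $(\xx,\yy)$ does. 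Hence it suffices to treat one representative of each $\GL_2(\Z)$-orbit, and I may assume $\det[\xx\ \yy]=p\ge0$.

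Using equivariance I would normalize to $\yy=(0,n)$ with $n=d(\yy)$ and $\xx=(p/n,\,r)$, $0\le r<p/n$ (note $n\mid p$). This exhibits the cases handled directly by (\ref{formula_somerelations}): if $p=0$ the vectors are parallel and the first relation applies; if $p/n=1$ then $(\xx,\yy)=((1,0),(0,p))$, which is the third relation with $k=p$; and the delicate case is when $\xx+\yy\in p\Z^2$, which forces $n=1$ and $r=p-1$, i.e. $(\xx,\yy)=((p,p-1),(0,1))$. I would check that this last pair is the image of $((1,0),(-1,p))$ under the element of $\SL_2(\Z)$ with columns $(p,p-1)$ and $(1,1)$, so by equivariance it is governed by the second relation with $k=p$.

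In the remaining cases ($p/n\ge2$ and $\xx+\yy\notin p\Z^2$) I run the induction. I write $\xx=\aab+\bb$ (via Lemma~\ref{lemma_diophantine} when $\xx$ is primitive, and by the explicit choice $\aab=(1,a_2)$ for suitable $a_2$ otherwise) so that both $d(\yy,\aab)$ and $d(\xx+\yy,\aab)$ lie in the open interval $(-p,0)$. A short computation then shows that the five determinants $d(\aab,\bb),\,d(\yy,\aab),\,d(\yy,\bb),\,d(\yy+\aab,\bb),\,d(\yy+\bb,\aab)$ all have absolute value strictly less than $p$ (they work out to $\pm$ values controlled by $d(\yy,\aab)$ and $d(\xx+\yy,\aab)$). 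The inductive hypothesis therefore applies to all five pairs, and Lemma~\ref{lemma_trueforab} yields (\ref{formula_allrelations}) for $(\xx,\yy)$, completing the induction.

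I expect the main obstacle to be precisely the boundary locus $\xx+\yy\in p\Z^2$. There every decomposition of $\xx$ leaves one of the four pairs of Lemma~\ref{lemma_trueforab} with $\lvert\det\rvert\ge p$: in the Diophantine normalization $\xx=(p,q)$, $\yy=(0,1)$ the four determinants are $u,\,w,\,w-1,\,u+1$, and $u+1=p$ exactly when $q=p-1$, which is this locus. So the induction on $\lvert\det\rvert$ genuinely stalls, and one can verify no choice of $\aab$ avoids this. The resolution is that these pairs form a single $\GL_2(\Z)$-orbit represented by $((1,0),(-1,p))$, so the second relation of (\ref{formula_somerelations}) is exactly what is needed to seed the induction at the degenerate locus; the only remaining care is the elementary check that off this locus an $\aab$ forcing both $d(\yy,\aab)$ and $d(\xx+\yy,\aab)$ into $(-p,0)$ always exists.
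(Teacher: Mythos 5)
Your global strategy --- induction on $\lvert \det[\xx\ \yy]\rvert$ after a $\GL_2(\Z)$ normalization, with Lemma \ref{lemma_trueforab} driving the inductive step and Lemma \ref{lemma_diophantine} supplying the decomposition --- is the paper's, and your treatment of the locus $\xx+\yy\in p\Z^2$ (transporting $((p,p-1),(0,1))$ back to the pair $((1,0),(-1,p))$ of the second relation in (\ref{formula_somerelations})) coincides with the paper's Subcase 1c for $r=1$. The gap is your claim that this is the \emph{only} place the induction stalls. Take $\xx=(p,0)$, $\yy=(0,1)$ with $p\geq 2$: in your normalization $n=1$, $r=0$, and $\xx+\yy=(p,1)\notin p\Z^2$, so your scheme consigns this pair to the inductive step. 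But for any decomposition $\aab+\bb=\xx$ one has $d(\aab,\bb)=d(\aab,\xx)=-a_2p\in p\Z$, so either $d(\aab,\bb)=0$ --- in which case Lemma \ref{lemma_trueforab} gives nothing, since its proof divides by $\{d(\aab,\bb)\}$, a nonvanishing requirement you never flag (and note $d(\aab,\bb)=d(\yy,\aab)-d(\xx+\yy,\aab)$ can vanish even with both terms in $(-p,0)$) --- or $\lvert d(\aab,\bb)\rvert\geq p$, outside the inductive hypothesis. This pair is of course equivalent, under the swap-plus-reflection you record abstractly, to the third relation's pair $((1,0),(0,p))$; but your normalization template is not canonical (both $((p,0),(0,1))$ and $((1,0),(0,p))$ fit it, for the same orbit), and nothing in your case analysis routes the bad form to the good one. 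The paper avoids exactly this trap by additionally normalizing $d(\xx)\leq d(\yy)$ and keeping a separate Case 2 for $q=0$.

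A second stall point is immune even to swapping: $\xx=(2,0)$, $\yy=(0,2)$, $p=4$. The constraints $\lvert d(\yy,\aab)\rvert<4$ and $\lvert d(\yy,\bb)\rvert<4$ force $\aab=(1,a_2)$; then $d(\aab,\bb)=-2a_2$, so the only nonzero admissible values are $d(\aab,\bb)=\pm 2$, and in either case $\{d(\yy+\aab,\bb),\,d(\yy+\bb,\aab)\}=\{0,-4\}$: every decomposition leaves one of the five auxiliary pairs at determinant of absolute value exactly $p$. So your ``elementary check that off this locus an $\aab$ \dots always exists'' is false, and induction on $\lvert\det\rvert$ alone cannot close. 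The paper's resolution here is the one idea genuinely missing from your outline: in Subcase 1c and Case 2 with $r>1$ it takes $\aab=(0,-1)$, $\bb=(p,1)$, \emph{accepts} that $(\yy,\bb)$ has determinant $-p$, and disposes of that pair first at the same determinant level, which is legitimate because $d(\bb)\neq d(\xx)$ places it (after renormalization) in Subcase 1b with strict inequality. It is worth noting that your direct decomposition actually handles the paper's other hard family $((rp',rp'-r),(0,r))$, $r>1$, more simply than the paper does (e.g.\ $\aab=(1,0)$ already works there); the failures are precisely the boundary pairs $((P,0),(0,1))$ and $((2,0),(0,2))$, so your classification of base cases --- and with it the proof --- is incomplete as written.
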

\begin{proof}
  The proof proceeds by induction on $\lvert d(\xx,\yy)\rvert $, and the base case $d(\xx,\yy) = \pm 1$ is immediate from Remark \ref{remark_gl2z} and one application of the skein relation. Now assume
 \begin{equation}\label{assumption1} 
\textrm{for all } \xx',\yy' \in \Z^2\textrm{ with } \lvert d(\xx',\yy')\rvert < d(\xx,\yy), \textrm{ we have } [P_{\xx'},P_{\yy'}] = \{d(\xx',\yy')\} P_{\xx' + \yy'}
 \end{equation}
We would like to show that $[P_{\xx},P_{\yy}] = \{d(\xx,\yy)\} P_{\xx + \yy}$. By Remark \ref{remark_gl2z}, we may assume
 \[
  \yy = \left(\begin{array}{c} 0\\r\end{array}\right),\quad \xx = \left(\begin{array}{c} p\\q\end{array}\right),\quad d(\xx) \leq d(\yy),\quad 0 \leq q < p
 \]
If $p=1$, then this equation follows from (\ref{formula_somerelations}), so we may also assume $p > 1$. \\[2mm]

\noindent \emph{Case 1:} Assume $0 < q$. 

Let $p' = p / d(\xx)$ and $q' = q / d(\xx)$ - by the assumption $0 < q$, we see that $d(\xx) < p$, so $p' > 1$. We can therefore apply Lemma \ref{lemma_diophantine} to $p',q'$ to obtain $u,v,w,z \in \Z$ satisfying 
\begin{equation}\label{assumption1.5}
 uz - vw = 1,\quad uq' - vp' = 1,\quad u + w = p',\quad v + z = q',\quad 0 < u,w < p'
\end{equation}
We then define vectors $\aab$ and $\bb$ as follows (the properties listed follow from (\ref{assumption1.5})):
\begin{equation}\label{assumption2}
 \aab := \left( \begin{array}{c}  d(\xx)u\\ d(\xx)v\end{array}\right),\quad 
 \bb := \left( \begin{array}{c} d(\xx)w\\ d(\xx)z\end{array}\right),
 \quad \aab + \bb = \xx,\quad d(\aab, \bb) = d(\xx)^2 
\end{equation}

Using Lemma \ref{lemma_trueforab} and Assumption (\ref{assumption1}), it is sufficient to show that the absolute values of each of $d(\aab, \bb)$, $d(\yy,\bb)$, $d(\yy,\aab)$, $d(\yy+\aab,\bb)$, and $d(\yy+\bb,\aab)$ are strictly less than $\lvert pr \rvert = d(\xx,\yy)$. First,
\[
d(\aab,\bb) = d(\xx)^2 \leq d(\xx)d(\yy) = d(\xx)r < pr 
\]
where the last inequality follows from the assumption $0 < q < p$. Second, the absolute values of $d(\yy,\bb)$ and $ d(\yy,\aab) $ are strictly less than $pr$ by the inequalities in (\ref{assumption1.5}). Third, we compute
\begin{eqnarray*}
 -d(\yy+\aab, \bb) &=& -d(\yy,\bb) - d(\aab, \bb)\\
 &=& d(\xx)wr - d(\xx)^2\\
 &<& d(\xx)wr\\
 &\leq& pr
\end{eqnarray*}
Finally, we compute
\begin{eqnarray*}
 -d(\yy+\bb, \aab) &=& -d(\yy,\aab) - d(\bb, \aab)\\
 &=& d(\xx)ur + d(\xx)^2\\
 &\leq& \left(d(\xx)u + d(\xx)\right)d(\yy)\\
 &=& (u+1)d(\xx)r
\end{eqnarray*}

Therefore, we will be finished once we show that the absolute value of $(u+1)d(\xx)$ is strictly less than $p$. We now split into subcases:\\[2mm]

\noindent\emph{Subcase 1a:} If $u + 1 < p'$, then $(u+1)d(\xx)r < p'd(\xx)r = pr$, and we are done. 

\noindent \emph{Subcase 1b:} Assume $u + 1 = p'$. By equation (\ref{assumption1.5}), we have 
 \[
1 = uq' - vp' = (p'-1)q' - vp'  \implies p'(q'-v) = 1 + q' < 1 + p'
 \]
Since $p' > 1$, the last inequality implies $q' - v = 1$, which implies $v = q'-1$ and $z = 1$. Since $uz-vw = 1$, this implies $(p'-1)-(q'-1) = 1$, which implies $q' = p'-1$. If we write $g = d(\xx)$, we then have
\[
 -d(\yy + \bb, \aab) = -\det\left[ \begin{array}{cc}g&p -g\\g + r&p-2g\end{array}\right] =  rp + g(g-r) = rp + d(\xx)(d(\xx)-r)
\]
We already assumed that $d(\xx) \leq r$, and if this inequality is strict, then we are done.

\noindent\emph{Subcase 1c:} In this subcase, we are reduced to proving that the vectors $\yy = (0,r)$ and $\xx = (rp', rp'-r)$ satisfy (\ref{formula_allrelations}). First, suppose $r = 1$. Then there is a matrix in $\SL_2(\Z)$ that fixes $\yy$ and sends $\xx \mapsto (p', -1)$. Therefore, if $r=1$, the second equation of (\ref{formula_somerelations}) implies that $(\xx,\yy)$ satisfies (\ref{formula_allrelations}).

Now we assume $r > 1$ and replace our previous choice of $\aab$ and $\bb$ with a choice which is better adapted to this particular subcase. We define
\[
 \aab := \left(\begin{array}{c} 0\\-1 \end{array}\right),\quad \bb := \left(\begin{array}{c} rp'\\rp' - r + 1 \end{array}\right)
\]
Since $r > 1$, it is clear that the absolute values of the determinants of the matrices $[\aab , \bb]$, $[\yy,\aab]$, $[\yy+\aab,\bb]$, and $[\yy+\bb,\aab]$ are all less than $r^2p'$. This together with Assumption (\ref{assumption1}) shows that these pairs of vectors satisfy (\ref{formula_allrelations}). Finally, since $rp'-r$ and $rp'-r+1$ have different parities, we see that $d(\bb) \not= d(\xx) = r$. This together with Subcase 1b shows that $(\yy,\bb)$ satisfies (\ref{formula_allrelations}). Then Lemma \ref{lemma_trueforab} finishes the proof of this subcase, which finishes the proof of Case 1. \\[2mm]

\noindent \emph{Case 2:} In this case we assume $q=0$. We define $\aab, \bb$ similarly to Subcase 1c, so we have
\[
 \yy = \left(\begin{array}{c} 0\\r\end{array}\right),\quad \xx = \left(\begin{array}{c} p\\0\end{array}\right),\quad \aab := \left(\begin{array}{c} 0\\-1 \end{array}\right),\quad \bb := \left(\begin{array}{c} p\\ 1 \end{array}\right)
\]
If $r=1$, then the third equation of (\ref{formula_somerelations}) implies that the pair $(\xx,\yy)$ satisfies (\ref{formula_allrelations}). If $r > 1$, then an identical argument to Subcase 1c finishes the proof of this case and of the theorem.
\end{proof}

\section{The skein of the annulus as a module over the algebra $H$.}\label{sec_solidtorus}

We now describe the action of the algebra $H$ on the skein  $\cc$ of the annulus. 
Draw the torus $T^2$ as the boundary of a standardly embedded solid torus $V\subset\RR^3$. Once an orientation and a framing for the core of $V$ have been chosen, by choosing an oriented annular neighbourhood of the core curve, we can regard $V$ as a thickened annulus and get an explicit identification of the skein  $H(V)$ of $V$ with the skein $\cc$.  In the diagram below
\bc
\CylinderAnnulus
\ec
we indicate the relation of the torus and the framed core of $V$. 

Parametrize $T^2$ so that the core lies in the direction of the $(0,1)$ curve,  and the  framing of the core is chosen to agree with the neighbourhood framing of the $(0,1)$ curve in $T^2$. The $(1,0)$ curve in $T^2$ is then a meridian of the solid torus $V$.

As remarked in Section 2, the skein $H(V)\cong \cc$ is a module over the algebra $H(\partial V) \cong H$.  To describe  $h\cdot c\in\cc$ explicitly for $h\in H$ and $ c\in\cc$, we represent $c$ by a framed diagram in the core annulus, and $h$ by a framed diagram in $\partial V =T^2$. The union of these two framed diagrams in $V$ then represents $h\cdot c$ in $H( V) =\cc$. (Properly $h$ and $c$ are represented by some $R$-linear combination of framed diagrams, and we extend the construction bilinearly).

\subsection{The action}
 We now give precise statements about the action of $H$ on $\cc$. Because the core annulus of $V$ is parallel to the $(0,1)$ curve in $T^2$ the action of $P_{0,n}$ on $\cc$ is simply multiplication by $P_n$ in $\cc$. 

  By results of Morton and Hadji \cite{MH02,HM06}, the module $\cc$ has an $R$-linear basis given by elements $Q_{\lambda,\mu}$, where $\lambda,\mu$ range over the set of partitions. The basis elements $Q_{\lambda,\mu}$ are shown there to be eigenvectors  of the `meridian maps' from $\cc$ to itself defined for each $m$ by $c\mapsto  P_{m,0}\cdot c$. We will describe the action of $H$ in this basis. Then at the end of the section we will collect from the literature several facts about the $Q_{\lambda,\mu}$ for the reader's convenience. 

For   partitions $\lambda, \mu$   write 
\[
 s_{\lambda,\mu} := (s-s^{-1})\left( v^{-1} \sum_{x \in \lambda} s^{2c(x)} - v\sum_{x \in \mu} s^{-2c(x)}\right) + \frac{v^{-1}-v}{s-s^{-1}} 
\]
where $c(x) = j - i$ is the \emph{content} of the cell $x$ in row $i$ and column $j$. We will use the continental convention for Young diagrams, so that the unique partition of $1$ corresponds to a cell in the lower-left corner of the diagram, which is in row and column $0$.

For a partition $\lambda$ define its \emph{content polynomial} $C_\lambda(t)\in \Z[t^{\pm1}]$ by
\[C_\lambda(t)=\sum_{x\in\lambda} t^{c(x)}.\]
Then $s_{\lambda,\mu} = (s-s^{-1})(v^{-1} C_\lambda(s^2)- vC_\mu(s^{-2}))+ \frac{v^{-1}-v}{s-s^{-1}}$ 

\begin{theorem}\label{thm_Caction}
In the basis $Q_{\lambda,\mu}$ of $\cc$ described in \cite{MH02, HM06}, the action of $H$ is determined by the equations
 \begin{eqnarray*}
  P_{1,0}\cdot Q_{\lambda,\mu} &=& s_{\lambda,\mu}Q_{\lambda,\mu}\\
  P_{-1,0}\cdot Q_{\lambda,\mu} &=& s_{\mu,\lambda}Q_{\lambda,\mu}\\
  P_{0,1}\cdot Q_{\lambda,\mu} &=& \sum_{\alpha \in \lambda +1} Q_{\alpha,\mu} + \sum_{\beta \in \mu -1}Q_{\lambda,\beta}\\
  P_{0,-1}\cdot Q_{\lambda,\mu} &=& \sum_{\beta \in \lambda -1} Q_{\beta,\mu} + \sum_{\alpha \in \mu +1}Q_{\lambda,\alpha}
  \end{eqnarray*}
where $\lambda +1,\lambda -1$ are the set of partitions where one cell has been added to (subtracted from) $\lambda$, respectively.
\end{theorem}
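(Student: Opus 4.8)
The plan is to identify each of the four generators $P_{\pm 1,0},\, P_{0,\pm 1}$ with an explicit geometric operation on $\cc$, and then to reduce the four equations to just the two involving $P_{1,0}$ and $P_{0,1}$ by exploiting an orientation-reversing symmetry. Since the core of $V$ runs in the $(0,1)$ direction, $P_{0,1}$ acts as multiplication by the power sum $P_1$ in $\cc$, while the $(1,0)$ curve is a meridian of $V$, so $P_{1,0}$ acts as the \emph{meridian map} $c \mapsto P_{1,0}\cdot c$ studied by Morton and Hadji. For the reduction I would use the rotation by $\pi$ of $T^2$, i.e.\ the element $-I \in \SL_2(\Z)$; it extends to an orientation-preserving homeomorphism of the solid torus $V$ that carries meridians to meridians and reverses the core. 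On homology it sends $(1,0)\mapsto(-1,0)$ and $(0,1)\mapsto(0,-1)$, so $P_{1,0}\mapsto P_{-1,0}$ and $P_{0,1}\mapsto P_{0,-1}$; and because it reverses the orientation of the core it interchanges the positively- and negatively-oriented strands of the annulus, hence swaps the two partitions, $Q_{\lambda,\mu}\mapsto Q_{\mu,\lambda}$. Applying this algebra/module symmetry to the $P_{1,0}$ and $P_{0,1}$ equations and relabelling $\lambda\leftrightarrow\mu$ reproduces exactly the stated $P_{-1,0}$ and $P_{0,-1}$ equations, so it suffices to establish the first and third equations.

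For the first equation I would appeal to the eigenvalue computation for the meridian map in \cite{MH02, HM06}. That $Q_{\lambda,\mu}$ is an eigenvector of $c\mapsto P_{1,0}\cdot c$ is precisely the statement that this basis diagonalizes the meridian maps, which is its defining property; the remaining task is to check that the eigenvalue recorded there agrees with $s_{\lambda,\mu}$. Here I would rewrite that eigenvalue in terms of the content polynomials $C_\lambda(s^2)$ and $C_\mu(s^{-2})$ and match it against the closed form $s_{\lambda,\mu} = (s-s^{-1})\bigl(v^{-1}C_\lambda(s^2) - vC_\mu(s^{-2})\bigr) + \frac{v^{-1}-v}{s-s^{-1}}$, tracking carefully the framing factors $v^{\pm 1}$ introduced by the framing-change relation and the normalization of $Q_{\lambda,\mu}$ used in those papers.

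The third equation is the Pieri-type branching rule for multiplication by $P_1$. Writing $\cc \cong \Lambda \otimes \Lambda$ for the decomposition of the annulus skein into symmetric functions in the positively- and negatively-oriented strands, with $Q_{\lambda,\mu}$ corresponding (up to normalization) to $s_\lambda \otimes s_\mu$, the operator $P_{0,1}$ is multiplication by the single positively-oriented power sum $P_1$. On the first tensor factor this is the classical Pieri rule, giving the term $\sum_{\alpha\in\lambda+1}Q_{\alpha,\mu}$. The term $\sum_{\beta\in\mu-1}Q_{\lambda,\beta}$ comes from the \emph{mixed} interaction, in which a positively-oriented $P_1$ meeting a negatively-oriented Schur function removes one box of $\mu$; I would derive its precise coefficient-one form from the multiplication rules for oppositely-oriented Schur functions in $\cc$ recorded by Hadji and Morton. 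A grading check confirms consistency: $Q_{\lambda,\mu}$ sits in degree $\lvert\lambda\rvert-\lvert\mu\rvert$, and both box operations shift the degree by $+1$, matching $\deg P_{0,1}$.

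I expect the mixed box-removing term to be the main obstacle: the positive-strand Pieri rule is standard, but obtaining the coefficient-one branching for the interaction of $P_1$ with the negatively-oriented factor requires the actual structure constants of $\cc$ (the ``annulus'' product of oppositely-oriented Schur functions) rather than a formal manipulation. As a consistency check once both equations are in hand, I would verify that the commutator $[P_{1,0},P_{0,1}]$ computed from these formulas reproduces $(s-s^{-1})P_{1,1}$; this simultaneously pins down the overall normalization of the $Q_{\lambda,\mu}$ and confirms that no stray scalar factors have crept into the branching coefficients.
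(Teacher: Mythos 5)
Your proposal is correct, and for the two substantive inputs it leans on the same external results as the paper; but your overall architecture is genuinely different in one respect, so a comparison is worthwhile. The paper's proof is pure citation for all four equations: the $P_{\pm 1,0}$ eigenvalue formulas are both quoted from the meridian map theorem \cite[Thm.\ 3.9]{HM06}, and the $P_{0,\pm 1}$ formulas are both quoted as the product formulae of \cite{MR10}. You instead prove only the $P_{1,0}$ and $P_{0,1}$ cases and deduce the other two from the $-I \in \SL_2(\Z)$ symmetry. That reduction is valid: $-I$ is realized by the rotation of the standardly embedded solid torus through $\pi$ about an axis through the hole, which preserves $V$, sends $P_{\xx} \mapsto P_{-\xx}$ on $H(\partial V)$ (the paper's remark that $\SL_2(\Z)$ permutes the $P_\xx$), and intertwines the module action; one can check on closed braids that its effect on $\cc$ coincides exactly with string reversal (reflection in the core direction reverses the braid word, the thickness flip restores the crossings, and closures are invariant under word reversal), so it sends $Q_{\lambda,\mu} \mapsto Q_{\mu,\lambda}$ as the paper records among its listed facts about $\cc$. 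Your argument states this last identification a bit quickly --- ``reverses the core, hence swaps the partitions'' conflates the homeomorphism with the diagrammatic string-reversal map, and strictly these must be compared --- but the gap is easily filled as above. What your route buys is economy (only two formulas need external input, and the symmetry explains \emph{why} the $\pm$ pairs look alike); what the paper's route buys is brevity and avoidance of the homeomorphism-versus-string-reversal check. One attribution point: you correctly isolate the mixed term $\sum_{\beta \in \mu - 1} Q_{\lambda,\beta}$ as the real content of the third equation, but propose to extract it from the Hadji--Morton structure constants; note that \cite{HM06} (Thm.\ 3.5) gives positivity of the coefficients rather than the explicit unit-coefficient Pieri rule, which is exactly what \cite{MR10} supplies and why the paper cites it --- deriving it yourself from the determinantal formula for $Q_{\lambda,\mu}$ would work but is a genuine computation, not a formal manipulation, just as you anticipated. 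Your closing consistency checks (the grading by $\lvert\lambda\rvert - \lvert\mu\rvert$ and the commutator $[P_{1,0},P_{0,1}] = (s-s^{-1})P_{1,1}$ acting on $\cc$) are sound and agree with Theorem \ref{thm_Cactionfull}.
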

\begin{proof}
 The first two equations are  the meridian map formulae in \cite[Thm.\ 3.9]{HM06}, and the last two are the product formulae in \cite{MR10}.
\end{proof}

\begin{lemma}
 The statements in Theorem \ref{thm_Caction} completely determine the module structure of $\cc$.
\end{lemma}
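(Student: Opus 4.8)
The lemma asserts that the four action formulas in Theorem~\ref{thm_Caction}, which specify how $P_{1,0}$, $P_{-1,0}$, $P_{0,1}$, and $P_{0,-1}$ act on the basis $Q_{\lambda,\mu}$, suffice to determine the entire $H$-module structure of $\cc$. Since $H$ acts, this amounts to showing that these four elements generate $H$ as an algebra.

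**The plan.** The strategy is to reduce ``determining the module structure'' to ``generating the algebra'' and then to exhibit every $P_\xx$ as an iterated commutator of the four listed elements, using the commutation relations of Theorem~\ref{thm_commutationrelations}. First I would observe that the action of any element of $H$ on $\cc$ is determined once the action of a generating set is known, since the module is $R$-linear and the basis $Q_{\lambda,\mu}$ is fixed; so it is enough to show that $P_{1,0}$, $P_{-1,0}$, $P_{0,1}$, $P_{0,-1}$ generate $H$ as an $R$-algebra.

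**The generation argument.** By Corollary~\ref{cor_basis}, $H$ is generated by all the $P_\xx$ subject to relation~(\ref{formula_allrelations}), so it suffices to produce each $P_\xx$ from the four distinguished elements. The key input is that for coprime $\xx,\yy$ with $\det[\xx\ \yy] = \pm 1$ we have $[P_\xx, P_\yy] = \pm\{1\}P_{\xx+\yy} = \pm(s-s^{-1})P_{\xx+\yy}$, and $(s-s^{-1})$ is invertible in $R$; hence $P_{\xx+\yy}$ lies in the subalgebra generated by $P_\xx$ and $P_\yy$. I would first build all \emph{primitive} $P_{m,n}$: starting from $P_{1,0}$ and $P_{0,1}$, repeated commutators reach every primitive vector, since $\mathrm{SL}_2(\Z)$ is generated by the elementary matrices taking the standard basis to any unimodular pair and each such step corresponds to a unit-determinant commutator. (One can make this precise by inducting on $\lvert m\rvert + \lvert n\rvert$ and using $P_{-1,0}$, $P_{0,-1}$ to descend in the negative directions.) Once all primitive elements are available, the non-primitive $P_{k\xx}$ are obtained from the relation $[P_\xx, P_{(k-1)\xx + \yy}]$ with a suitable $\yy$ chosen so the determinant equals $\pm1$ and the right-hand side is the desired $P_{k\xx + \yy}$; alternatively, one notes that the ``vertical'' power sums $P_{0,k}$ already appear through iterated brackets of $P_{0,1}$ with off-axis generators, and then applies $\mathrm{SL}_2(\Z)$-equivariance.

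**The main obstacle.** The subtle point is generating the elements $P_{k\xx}$ that are \emph{not} primitive: the relation $[P_\xx,P_\xx]=0$ gives nothing, so one cannot produce $P_{2,0}$ directly from $P_{1,0}$ alone, and must instead route through an auxiliary off-axis generator with determinant exactly $\pm1$ to keep the coefficient $\{d\}$ a unit. I expect the bookkeeping that every required $P_{k\xx}$ can be reached by such a chain --- organized as an induction on $k$ and on $\det$ --- to be the place where care is needed, though each individual step is just one application of the Jacobi-free bracket relation with an invertible coefficient.
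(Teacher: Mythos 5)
Your reduction --- that the lemma follows once $P_{1,0},P_{-1,0},P_{0,1},P_{0,-1}$ are known to generate $H$ over $R$ --- is exactly the route the paper takes (its proof is just this assertion, stated without detail), and your Farey/mediant induction producing all \emph{primitive} $P_\xx$ from the four elements via unit-determinant brackets is correct. The gap is in the non-primitive case, and it is a genuine one: no bracket with determinant $\pm1$ can ever produce $P_{k\xx}$ for $k\geq 2$, because for any decomposition $\aab+\bb=k\xx$ one has $d(\aab,\bb)=d(\aab,k\xx)=k\,d(\aab,\xx)$, which is divisible by $k$ (for $P_{2,0}$, every decomposition has even determinant). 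Concretely, your proposed bracket $[P_\xx,P_{(k-1)\xx+\yy}]$ with $d(\xx,\yy)=\pm1$ equals $\pm\{1\}P_{k\xx+\yy}$, and $k\xx+\yy$ is itself primitive (its determinant with $\xx$ is $\pm1$), so this step only reproduces elements you already have and never reaches $P_{k\xx}$.

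The repair is to drop the insistence on unit determinant and use instead that in the paper's coefficient ring \emph{every} $\{k\}=s^k-s^{-k}$ with $k\neq 0$ is invertible --- this is built into $R$ (denominators $s^r-s^{-r}$, $r>0$), and is already needed to make $P_k=\frac{s-s^{-1}}{s^k-s^{-k}}X_k$ well defined. Given primitive $\xx$ and $k\geq 2$, choose a primitive $\yy$ with $d(\yy,\xx)=1$; then $k\xx-\yy$ is also primitive, since $d(\xx,k\xx-\yy)=-d(\xx,\yy)=-1$, and relation (\ref{formula_allrelations}) gives $[P_\yy,P_{k\xx-\yy}]=\{k\,d(\yy,\xx)\}P_{k\xx}=\{k\}P_{k\xx}$, so $P_{k\xx}$ lies in the subalgebra generated by the primitive elements after dividing by the unit $\{k\}$. (The paper's own relation $[P_{1,0},P_{-1,k}]=\{k\}P_{0,k}$ from (\ref{formula_somerelations}) is precisely the case $\xx=(0,1)$, $\yy=(1,0)$.) This also fixes your fallback argument: you do not need $\SL_2(\Z)$-equivariance, which by itself does not show that the subalgebra generated by the four elements is $\SL_2(\Z)$-stable; you simply apply the commutation relation to the transformed pair of primitive vectors directly.
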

\begin{proof}
 Since the $Q_{\lambda,\mu}$ form a basis for $\cc$ the action of  $P_{\pm 1,0}$ and $P_{0,\pm 1}$ on $\cc$ is completely described by Theorem \ref{thm_Caction}. Over $R$ these elements generate the algebra $H$, which completes the claim.
\end{proof}

\begin{remark}
The skein $\cc$ has a multiplicative identity $1$, represented by the empty diagram.  Then $h\cdot 1$ is the element $h$ regarded as lying in the solid torus $V$.  In particular, when $h$ is a meridian element, meaning that $h$ is $P_{1,0}$ decorated by some element $c\in\cc$, the element $h\cdot 1$  is represented by the zero framed unknot in a ball inside $V$ decorated by $c$. Its value in $\cc$ is the scalar multiple  ${\rm ev}(c) 1$ of the identity, where ${\rm ev}(c)$ is the Homflypt polynomial of the unknot decorated by $c$.

Thus $P_{m,0}\cdot 1 ={\rm ev}(P_m) 1$, and it is known that $${\rm ev}(P_m) =\frac{v^{-m}-v^m}{s^m-s^{-m}}$$

Our commutation relation $[P_{m,0},P_{0,n}] =\{mn\}P_{m,n}$ leads to the calculation in \cite{MM08} of the effect in $\cc$ of putting a meridian decorated by $P_m$ around the core decorated by $P_n$. 
The core decorated by $P_n$ is $P_{0,n}\cdot 1\in\cc$.  Hence putting a meridian decorated by $P_m$ around this gives 
\beqn P_{m,0}\cdot(P_{0,n}\cdot 1)=P_{m,0}P_{0,n}\cdot 1 
&=&\{mn\}P_{m,n}\cdot 1+P_{0,n}P_{m,0}\cdot 1\\
& =&\{mn\}P_{m,n}\cdot 1 +{\rm ev}(P_m) P_{0,n}\cdot 1 \\
&=&\{mn\}P_{m,n}\cdot 1 +{\rm ev}(P_m) P_n.
\eeqn
In fact it was this equation in \cite[Thm.\ 18]{MM08},  illustrated there in figure 14, with $N, M$  in place of our $m,n$ respectively,  which encouraged us to conjecture (and then prove) the commutation relations of Theorem \ref{thm_commutationrelations} in their complete generality. 
\end{remark}

We now deduce formulae for the action of $P_{m,n}$ on the elements $Q_{\lambda,\mu}$. We first establish some notation.
\begin{definition}
We will use several statistics on pairs of partitions.
\begin{enumerate}
\item For $\lambda \subset \alpha$,  write $\alpha - \lambda$ for the skew partition consisting of the  cells contained in $\alpha$ but not in $\lambda$. 

\item Write $\lambda + n$ for the set of partitions $\alpha\supset\lambda$ where $\alpha - \lambda$ is a \emph{border strip}\footnote{A \emph{border strip} is a (skew) partition that is connected and contains no $2\times 2$ squares.} of length $n$. Similarly  $\lambda - n$ is the set of partitions $\beta\subset\lambda$ where $\lambda-\beta$ is a border strip of length $n$. 

\item Write $\mathrm{ht}(\gamma)$ for the \emph{height} of a border strip $\gamma$,  defined as the number of rows in $\gamma$ plus 1. 

\item Extend the  definition of content polynomial to cover skew partitions $\alpha-\lambda$ by setting \[C_{\alpha-\lambda}(t):=\sum_{x\in\alpha -\lambda} t^{c(x)}=C_\alpha(t)-C_\lambda(t).\]

Replace $s$ and $v$ by $s^m$ and $v^m$ in $ s_{\lambda,\mu}$ to define \[ s_{\lambda,\mu}(m):=\{m\}(v^{-m}C_\lambda(s^{2m})-v^mC_\mu(s^{-2m})) + \frac{v^{-m}-v^m}{s^m-s^{-m}}.\]

\item For a skew partition $\gamma$ write 
\beqn 
b(m,\gamma)&:=&v^{-m}C_\gamma(s^{2m})\\
b^-(m,\gamma)&:=&(-1)^{\mathrm{ht}(\gamma)} v^{-m}C_\gamma(s^{2m})
\eeqn
\end{enumerate}
\end{definition}

Using these definitions, we have the immediate relations that \[s_{\lambda,\mu}(m) =\{m\}b(m,\lambda)+\{-m\}b(-m,\mu)+\frac{v^{-m}-v^m}{s^m-s^{-m}}\] and $s_{\lambda,\mu}(-m)=s_{\mu,\lambda}(m)$.

\begin{remark}\label{strip} If $\gamma$ is a border strip of length $n$ it consists of a sequence of $n$ cells, each adjacent to the previous one, starting with a cell of least content $k$, say. The content increases by one for each cell in the sequence, so that $C_\gamma(t)=t^k(1+t+\cdots+t^{n-1})$. Then $b(1,\gamma)=v^{-1}s^{2k}(s^{2n}-1)/(s^2 -1)=v^{-1}s^{2k+n-1}\{n\}/\{1\}$. It follows that \[\frac{\{m\}}{\{mn\}}b(m,\gamma)=(v^{-1}s^{2k+n-1})^m.\]
\end{remark} 

 The following formulae generalise those of Theorem \ref{thm_Caction}. They apply even in the case of $n<0$, provided that when $\alpha\subset\lambda$ we set $\mathrm{ht}(\alpha-\lambda)=\mathrm{ht}(\lambda-\alpha)$, and still take $C_{\alpha-\lambda}(t)$ to mean $C_\alpha(t)-C_\lambda(t)$, so that $b(m,\alpha-\lambda)=-b(m,\lambda-\alpha)$.
\begin{theorem}\label{thm_Cactionfull} For $m,n \in \Z-\{0\}$ we have the following equalities:
 \begin{eqnarray}
  P_{m,0}\cdot Q_{\lambda,\mu} &=& s_{\lambda,\mu}(m)Q_{\lambda,\mu}\label{full1}\\
   P_{0,n}\cdot Q_{\lambda,\mu} &=& \sum_{\alpha \in \lambda + n} (-1)^{\mathrm{ht}(\alpha - \lambda)}Q_{\alpha,\mu} + \sum_{\beta \in \mu - n}(-1)^{\mathrm{ht}(\mu - \beta)}Q_{\lambda,\beta}\label{full2}\\
  P_{m,n}\cdot Q_{\lambda,\mu} &=& \frac{\{m\}}{\{mn\}} \left[
\sum_{\alpha \in \lambda + n}  b^-(m,\alpha-\lambda)Q_{\alpha,\mu} +
\sum_{\beta \in \mu - n}   b^-(-m,\mu - \beta) Q_{\lambda,\beta}\right] \label{full3}
\end{eqnarray}
 \end{theorem}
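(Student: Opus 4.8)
The plan is to prove the three formulas of Theorem~\ref{thm_Cactionfull} by leveraging the commutation relations of Theorem~\ref{thm_commutationrelations} to reduce everything to the base cases already recorded in Theorem~\ref{thm_Caction}. The key structural observation is that formula~(\ref{full3}) for general $P_{m,n}$ should follow from the $P_{m,0}$ and $P_{0,n}$ cases via the relation $[P_{m,0},P_{0,n}] = \{mn\}P_{m,n}$, which gives $P_{m,n} = \frac{1}{\{mn\}}[P_{m,0},P_{0,n}]$. So first I would establish (\ref{full1}) and (\ref{full2}) independently, and then deduce (\ref{full3}) by computing the commutator action on $Q_{\lambda,\mu}$.

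For formula~(\ref{full1}), the eigenvalue statement $P_{m,0}\cdot Q_{\lambda,\mu} = s_{\lambda,\mu}(m)Q_{\lambda,\mu}$, I would appeal to the fact that the $Q_{\lambda,\mu}$ are simultaneous eigenvectors of \emph{all} the meridian maps $c \mapsto P_{m,0}\cdot c$ (this is the content of the Morton--Hadji diagonalization in \cite{HM06}), and then identify the eigenvalue. The explicit eigenvalue should come directly from the content-polynomial formula for meridian eigenvalues in \cite{HM06}, matching the definition $s_{\lambda,\mu}(m) = \{m\}(v^{-m}C_\lambda(s^{2m}) - v^m C_\mu(s^{-2m})) + \frac{v^{-m}-v^m}{s^m-s^{-m}}$; for $m=\pm 1$ this is exactly Theorem~\ref{thm_Caction}. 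For formula~(\ref{full2}), the action of $P_{0,n}$, I would use that the core annulus is parallel to the $(0,1)$ curve, so $P_{0,n}$ acts by multiplication by $P_n \in \cc$. The signed sum over border strips $\alpha \in \lambda + n$ with coefficient $(-1)^{\mathrm{ht}(\alpha-\lambda)}$ is precisely the skew Pieri/Murnaghan--Nakayama-type rule for multiplying the power sum $P_n$ in the symmetric-function interpretation of $\cc$, and I would cite the product formulas of \cite{MR10} (which give the $n=\pm 1$ case) together with the standard combinatorics for higher power sums.

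The main work is deducing (\ref{full3}). I would compute
\begin{eqnarray*}
\{mn\}\,P_{m,n}\cdot Q_{\lambda,\mu} &=& P_{m,0}\cdot(P_{0,n}\cdot Q_{\lambda,\mu}) - P_{0,n}\cdot(P_{m,0}\cdot Q_{\lambda,\mu}).
\end{eqnarray*}
Expanding the inner $P_{0,n}$ action via (\ref{full2}) and then applying the eigenvalue (\ref{full1}) to each resulting $Q_{\alpha,\mu}$ (resp.\ $Q_{\lambda,\beta}$), the coefficient of $Q_{\alpha,\mu}$ becomes $(-1)^{\mathrm{ht}(\alpha-\lambda)}\big(s_{\alpha,\mu}(m) - s_{\lambda,\mu}(m)\big)$. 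The crucial simplification is then the eigenvalue difference
\[
s_{\alpha,\mu}(m) - s_{\lambda,\mu}(m) = \{m\}\,v^{-m}\big(C_\alpha(s^{2m}) - C_\lambda(s^{2m})\big) = \{m\}\,b(m,\alpha-\lambda),
\]
using $C_{\alpha-\lambda} = C_\alpha - C_\lambda$ and the fact that the $\mu$-dependent and constant terms cancel in the difference. Dividing by $\{mn\}$ yields exactly the coefficient $\frac{\{m\}}{\{mn\}}b^-(m,\alpha-\lambda)$ once the sign $(-1)^{\mathrm{ht}(\alpha-\lambda)}$ is absorbed into the definition of $b^-$; the $\mu$-strip terms follow identically using $s_{\lambda,\mu}(-m) = s_{\mu,\lambda}(m)$ and $b(m,\alpha-\lambda) = -b(m,\lambda-\alpha)$.

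The step I expect to be the main obstacle is establishing formula~(\ref{full2}) for general $n$ with the correct height signs, since \cite{MR10} directly supplies only the $n = \pm 1$ product rule; I would need to either invoke a higher power-sum Murnaghan--Nakayama rule in the skein-theoretic setting or bootstrap from $n=\pm 1$, and care is needed to verify that the sign conventions and the extension of $\mathrm{ht}$ and $b$ to the case $\alpha \subset \lambda$ (where $n<0$) are mutually consistent so that the algebraic identity $s_{\alpha,\mu}(m)-s_{\lambda,\mu}(m)=\{m\}b(m,\alpha-\lambda)$ holds with the stated signs in all cases. Once (\ref{full2}) is in hand, the commutator computation for (\ref{full3}) is a routine, if delicate, bookkeeping exercise in the quantum-number identities.
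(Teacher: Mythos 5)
Your reduction of (\ref{full3}) to (\ref{full1}) and (\ref{full2}) is correct, and it is exactly the paper's route for that step: expanding $\{mn\}P_{m,n}\cdot Q_{\lambda,\mu} = [P_{m,0},P_{0,n}]\cdot Q_{\lambda,\mu}$ and using the eigenvalue difference $s_{\alpha,\mu}(m)-s_{\lambda,\mu}(m) = \{m\}b(m,\alpha-\lambda)$ gives the stated coefficients, with the signs absorbed into $b^-$ just as you say. The genuine gap is in your treatment of the two base formulas, which is precisely where the paper's work lies. For (\ref{full1}), the citation you propose does not exist in the required generality: \cite[Thm.\ 3.9]{HM06} supplies only the $m=\pm 1$ eigenvalues (this is the content of Theorem \ref{thm_Caction}), and the paper explicitly remarks that the general-$m$ content-polynomial formula was previously known only for $\mu=\emptyset$ (\cite[Lemma 17]{MM08}). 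Simultaneous diagonalizability for all $m$ does follow from distinctness of the $m=1$ eigenvalues together with commutativity of the horizontal subalgebra, but that gives no formula for the eigenvalue; identifying it as $s_{\lambda,\mu}(m)$ is a real computation. The paper does it by induction on $m$ via $P_{m+1,0} = -\{m+1\}^{-1}[P_{m,1},P_{1,-1}]$, using (\ref{full3}) with $n=\pm 1$ at lower levels and a telescoping content identity; note this makes (\ref{full1}) and (\ref{full3}) a \emph{joint} induction in which they feed each other, rather than the linear order (\ref{full1}), (\ref{full2}), then (\ref{full3}) that your plan assumes.

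For (\ref{full2}) you correctly flag the main obstacle but leave it open, and the ``standard combinatorics for higher power sums'' will not close it as stated: under $\cc\cong\Lambda\otimes\Lambda$ the basis $Q_{\lambda,\mu}$ is \emph{not} the product basis $s_\lambda\otimes s_\mu$ (only modulo lower-order terms, Fact 6 of Section \ref{sec_solidtorus}), so multiplying by $P_n = p_n\otimes 1$ via the ordinary Murnaghan--Nakayama rule would add $n$-strips to $\lambda$ and never touch $\mu$; the $\mu - n$ terms in (\ref{full2}) are special to the $Q_{\lambda,\mu}$ (compound Schur) basis. What you would actually need is a Murnaghan--Nakayama rule for King's compound Schur functions together with the nontrivial identification $Q_{\lambda,\mu} = s_{\lambda;\overline{\mu}}$, neither of which the paper takes as available. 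Instead it bootstraps from the $n=\pm 1$ case of \cite{MR10} by induction via $P_{0,n+1} = \{n+1\}^{-1}[P_{1,n},P_{-1,1}]$, and the heart of the proof is the border-strip cancellation analysis: cross terms cancel, disjoint cell-and-strip and $2\times 2$-square configurations contribute zero, and the two extreme-cell contributions combine to $(-1)^{\mathrm{ht}(\gamma-\lambda)}\{n+1\}\{n\}/\{1\}$ exactly when $\gamma-\lambda$ is an $(n+1)$-strip. So your architecture is sound and your (\ref{full3}) step is complete, but the two inductions you defer are the actual content of the theorem, not routine bookkeeping.
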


\begin{remark}
 Before proving the theorem, we remark that when $\mu = \emptyset$, these formulas were already known. The first follows from the identification of $P_{0,n}$ with the power sum function in \cite{Mor02, MM08}, and the second appears as \cite[Lemma 17]{MM08}.
\end{remark}

\begin{proof}
We know  the first two equations for $m = \pm 1$ and $n = \pm 1$. Equation (\ref{full3}) follows from (\ref{full1}) and (\ref{full2}) by a straightforward calculation using the commutation relation of Theorem \ref{thm_commutationrelations}. 

To prove equation (\ref{full1}), we proceed by induction on $m$, where in the inductive step we assume that the first and third equations are true for $0 \leq m \leq M$ and for $n \in \{-1,1\}$, and prove the first equation for $m = M+1$. (The case $m < 0$ follows by symmetry.) By the inductive assumption, we have
\begin{eqnarray*}
P_{m,1}\cdot Q_{\lambda,\mu} &=& \sum_{\alpha \in \lambda + 1} b(m,\alpha - \lambda)Q_{\alpha,\mu} + 
\sum_{\beta \in \mu - 1}b(-m,\mu - \beta)Q_{\lambda,\beta}\\
P_{1,-1} \cdot Q_{\lambda,\mu} &=& \sum_{\alpha \in \lambda - 1} b(1,\lambda - \alpha)Q_{\alpha,\mu} +
\sum_{\beta \in \mu + 1}b(-1,\beta - \mu)Q_{\lambda,\beta}
\end{eqnarray*}
From these, we can compute the action of $P_{m+1,0}$ using the commutation relation
\begin{equation}\label{eq_commtimesQ1}
P_{m+1,0}\cdot Q_{\lambda,\mu} = \frac{-1}{\{m+1\}\, } [P_{m,1},P_{1,-1}]\cdot Q_{\lambda,\mu}
\end{equation}
To shorten the following computation, we first note that in the formula for $P_{m,1}P_{1,-1}\cdot Q_{\lambda,\mu}$ there will be 4 types of terms corresponding to whether each operator adds cells to $\lambda$ or remove cells from $\mu$. The ``cross terms" where both 
$\lambda$ and $\mu$ change will cancel with the analogous cross terms from $-P_{1,-1}P_{m,1}\cdot Q_{\lambda,\mu}$. Also, equality (\ref{eq_commtimesQ1}) for the terms where just $\lambda$ changes is equivalent to the equality of the terms where just $\mu$ changes, by symmetry. Therefore, in the following computation we will just write the terms of the right hand side of (\ref{eq_commtimesQ1}) where only $\lambda$ changes, and denote the rest of the terms by ``$\cdots$''.
\begin{eqnarray}
P_{m,1}P_{1,-1}\cdot Q_{\lambda,\mu} &=& \sum_{\alpha \in \lambda - 1}b(1,\lambda - \alpha)\sum_{\alpha' \in \alpha + 1}b(m,\alpha' - \alpha)Q_{\alpha',\mu} + \cdots \label{eq_t1} \\
P_{1,-1}P_{m,1}\cdot Q_{\lambda,\mu} &=& \sum_{\beta \in \lambda + 1}b(m,\beta - \lambda)\sum_{\beta' \in \beta - 1}b(1,\beta - \beta')Q_{\beta',\mu}  + \cdots\label{eq_t2}
\end{eqnarray}
We now examine the coefficients of $Q_{\gamma,\mu}$ in $-[P_{m,1},P_{1,-1}]\cdot Q_{\lambda,\mu}$, which is the difference  of these two expressions. The terms where $\gamma \not= \lambda$ appear exactly once in both (\ref{eq_t1}) and (\ref{eq_t2}) with equal coefficients,  so they cancel. The coefficient of $Q_{\lambda,\mu}$  in (\ref{eq_t1}) comes from the cases where a cell $x$ is removed from $\lambda$ to give $\alpha\in\lambda-1$, and then restored to get $\alpha'=\lambda$. This gives \[\sum_{\{ x\,\mid \, \alpha = \lambda - x\} } b(1,x)b(m,x) = v^{-m-1}\sum_{\{ x \,\mid \,  \alpha = \lambda - x\} } s^{2(m+1)c(x)} .\] In (\ref{eq_t2}) we need $\beta'=\lambda$, and so $\beta$ arises by adding one cell $y$ to $\lambda$, to give the coefficient \[\sum_{\{ y \,\mid\,  \beta = \lambda + y\} } b(1,y)b(m,y)=v^{-m-1}\sum_{\{ y \, \mid \,  \beta = \lambda + y\} } s^{2(m+1)c(y)}.\]  
The  difference is then
\begin{eqnarray}
&{}& v^{-m-1}\sum_{\{ y \, \mid \,  \beta = \lambda + y\} } s^{2(m+1)c(y)} -v^{-m-1}\sum_{\{ x \,\mid \,  \alpha = \lambda - x\} } s^{2(m+1)c(x)} \label{eq_t3}
 \end{eqnarray}
 
It is now enough to show that the expression in (\ref{eq_t3}) is equal to the terms with coefficient $v^{-m-1}$ in  $\{m+1\}s_{\lambda,\mu}(m+1)$. (The terms with coefficient $v^{m+1}$ will come from the terms where a cell is added and subtracted from $\mu$, by symmetry). In other words, we must show that (\ref{eq_t3}) is equal to 
\begin{equation}\label{eq_t4}
v^{-m-1}  + \{m+1\}^2 b(m+1,\lambda) = v^{-m-1}\left[ 1+(s^{m+1} - s^{-m-1})^2 \sum_{z \in \lambda} s^{2(m+1)c(z)}\right]
\end{equation}
Finally, the equality of the expressions (\ref{eq_t3}) and (\ref{eq_t4}) is a well-known combinatorial identity. It can be proved by elementary means by first expanding the right hand side of (\ref{eq_t4}) along the rows and then along the columns of $\lambda$. The two powers of $\{m+1\}$ turn these two expansions into telescoping sums, and the leftover terms are exactly the ones in (\ref{eq_t3}). This completes the proof of  equation \ref{full1} in theorem \ref{thm_Cactionfull}.\\[2mm]

We now proceed to the proof of equation (\ref{full2}), using a similar induction on $n$. In this case we  use the commutation relation 
\begin{equation}\label{eq_commtimesQ2}
P_{0,n+1}\cdot Q_{\lambda,\mu} = \frac 1 {\{ n+1\}} [P_{1,n},P_{-1,1}]\cdot Q_{\lambda,\mu}
\end{equation}
The induction assumption shows that
\begin{eqnarray*}
\frac{\{n\} }{\{1\} } P_{1,n}\cdot Q_{\lambda,\mu} &=& \sum_{\alpha \in \lambda + n}b^-(1,\alpha - \lambda)Q_{\alpha,\mu} 
+ \sum_{\beta \in \mu - n}b^-(-1,\mu - \beta)
Q_{\lambda,\beta} \\
P_{-1,1}\cdot Q_{\lambda,\mu}&=& \sum_{\alpha \in \lambda + 1}b^-(-1,\alpha - \lambda)Q_{\alpha,\mu} 
+ \sum_{\beta \in \mu - 1}b^-(1,\mu - \beta)
Q_{\lambda,\beta}
\end{eqnarray*}
When $P_{1,n}P_{-1,1}$ is applied to $Q_{\lambda,\mu}$ there will be four types of terms, depending on  how cells are added to  $\lambda$ or subtracted from  $\mu$. 
Thus
\beqn 
\frac{\{n\} }{\{1\} }P_{1,n}P_{-1,1}\cdot Q_{\lambda,\mu}&=&\sum_{ \alpha\in\lambda+1, \gamma\in\alpha+n }a(\gamma,\mu)Q_{\gamma,\mu} + \sum_{ \alpha\in\lambda+1, \beta\in\mu-n }a(\alpha,\beta)Q_{\alpha,\beta}\\
&&+\sum_{  \beta\in\mu-1, \alpha\in\lambda+n}a'(\alpha,\beta)Q_{\alpha,\beta}+\sum_{\beta\in\mu-1, \gamma\in\beta-n }a(\lambda,\gamma)Q_{\lambda,\gamma}
\eeqn
There is a similar expansion
\beqn 
\frac{\{n\} }{\{1\} }P_{-1,1}P_{1,n}\cdot Q_{\lambda,\mu} &=&\sum_{ \alpha\in\lambda+n, \gamma\in\alpha+1 } d(\gamma,\mu)Q_{\gamma,\mu} + \sum_{ \beta\in\mu-n,\alpha\in\lambda+1  }d(\alpha,\beta)Q_{\alpha,\beta}\\
&& +\sum_{  \alpha\in\lambda+n,\beta\in\mu-1}d'(\alpha,\beta)Q_{\alpha,\beta}+\sum_{\beta\in\mu-n, \gamma\in\beta-1 } d(\lambda,\gamma)Q_{\lambda,\gamma}\eeqn where the operators are applied in the opposite order.

The coefficients of the  ``cross terms''  $Q_{\alpha,\beta}$ are 
 \beqn a(\alpha,\beta)&=& b(-1,\lambda-\alpha)b^-(1,\beta-\mu)\\
a'(\alpha,\beta)&=& b^-(1, \lambda-\alpha)b(-1,\beta-\mu)\\
d(\alpha,\beta)&=&b^-(1,\beta-\mu) b(-1,\lambda-\alpha)=a(\alpha,\beta)\\
d'(\alpha,\beta)&=& b(-1,\beta-\mu)b^-(1, \lambda-\alpha)=a'(\alpha,\beta)
\eeqn   
  These are unchanged when the operators are applied in the opposite order and thus
they will cancel to leave no cross terms on the right hand side of (\ref{eq_commtimesQ2}). 
  
  We now consider the coefficient $a(\gamma,\mu)-d(\gamma,\mu)$ of $Q_{\gamma,\mu}$ in $\frac{\{n\}}{\{1\}}[P_{1,n},P_{-1,1}]\cdot Q_{\lambda,\mu}$. We show that 
  \beqn
  a(\gamma,\mu)-d(\gamma,\mu)&=&\left\{ \begin{array}{ll} (-1)^{\mathrm{ht}(\gamma - \lambda)}\{n+1\}\frac{\{n\}}{\{1\} } & {\rm if } \ \gamma\in \lambda+(n+1)\\
   0 & {\rm  otherwise}
   \end{array}\right.
  \eeqn 
  
  Together with the similar result for the terms where only cells are removed from $\mu$ , relation (\ref{eq_commtimesQ2}) will establish  equation (\ref{full2}) for the coefficient of $P_{0,n+1}\cdot Q_{\lambda,\mu}$, and complete the induction step.

	The coefficients of $Q_{\gamma,\mu}$  are \beqn a(\gamma,\mu)&=&\sum_{(\alpha\in\lambda+1) \ \cap\ (\gamma-n)} b(-1,\alpha-\lambda)b^-(1,\gamma-\alpha)\\
	d(\gamma,\mu)&=&\sum_{(\alpha\in\lambda+n)\ \cap\ (\gamma-1)} b^-(1,\alpha-\lambda)b(-1,\gamma-\alpha)
	\eeqn 

We now consider the possible terms  in each sum, based on the shape of $\gamma - \lambda$. By construction $\gamma$ arises from $\lambda$, maybe in more than one way,  either by adding a cell   followed by an $n$-strip or  an $n$-strip followed by a cell. 

 If the cell $x$  and the $n$-strip $Y$ are disjoint then they can be added to $\lambda$ in either order. There will be only one term in each of $a(\gamma,\mu)$ and $d(\gamma,\mu)$,  both equal to $b(-1,x)b^-(1,Y)$, unless $n=1$ when $Y$ is also a cell, giving two terms in each sum.  In either case $a(\gamma,\mu)-d(\gamma,\mu)=0$.

Otherwise $\gamma-\lambda$ is connected and so it is either an $(n+1)$ border strip for $\lambda$ or it contains a single $2\times 2$ square.

 In the latter case the separately added cell $x$ must be one of the two cells on the south-west to north-east diagonal of the square, since they can't both occur in the $n$-strip $Y$. Now take $x$ to be the bottom left cell in the square and $x'$ the top right cell. Their respective complements $Y$ and $Y'$ in $\gamma-\lambda$ are $n$-strips. We can first add $x$ and then $Y$ or first $Y'$ and then $x'$ to get $\gamma$. Then   $a(\gamma,\mu)=b(-1,x)b^-(1,Y) =b(-1,x')b^-(1,Y')=d(\gamma, \mu)$, since the content of cells is constant on diagonals and $\mathrm{ht}(Y)=\mathrm{ht}(Y')$. Therefore only terms $Q_{\gamma,\mu}$ where $\gamma - \lambda$ is a border strip can have a non-zero coefficient.\\[2mm]

Consider now the coefficient $a(\gamma,\mu)-d(\gamma,\mu)$ when $\gamma - \lambda$ is a border strip. 
 There are two extreme cells in the border strip, $x$ at the top left, and $x'$ at the bottom right. Write $Y$ and $Y'$ respectively  for their complements in $\gamma-\lambda$. Write $h:=\mathrm{ht}(\gamma-\lambda)$ and $k:=c(x)$. Then $c(x')=k+n$, while the least content of a cell in $Y$ is $k+1$ and in $Y'$ is $k$.  By remark \ref{strip} we have $b^-(1,Y)=(-1)^{\mathrm{ht}(Y)}v^{-1}s^{2(k+1)+n-1}\{n\}/\{1\}$ and $b(-1,x)=vs^{-2k}$. In the same way  $b^-(1,Y')=(-1)^{\mathrm{ht}(Y')}v^{-1}s^{2k+n-1}\{n\}/\{1\}$ and $b(-1,x')=vs^{-2k-2n}$.
 
We can add $x$ and $Y$ to $\lambda$ in \emph{exactly} one order to get $\gamma$. If $x$ lies above $Y$ then we can add $x$ last, but not first. We have $\mathrm{ht}(Y) =h-1$, giving  a contribution of $b^-(1,Y)b(-1,x)=(-1)^{h-1}s^{n+1}\{n\}/\{1\}$ to $d(\gamma,\mu)$.    If $x$ lies to the left of $Y$ then we can add   $x$ first but not last. In this case $\mathrm{ht}(Y) =h$ and we get a contribution of $b^-(1,Y)b(-1,x)=(-1)^{h}s^{n+1}\{n\}/\{1\}$ to $a(\gamma,\mu)$.  In either case we get a contribution of  $(-1)^{h}s^{n+1}\{n\}/\{1\}$ to $a(\gamma,\mu)-d(\gamma,\mu)$.

In the same way we can add $x'$ and $Y'$ in exactly one order to get $\gamma$. When $x'$ lies below $Y'$ it can be added first. Then $\mathrm{ht}(Y')=h-1$ and we get the contribution  $b^-(1,Y')b(-1,x')=(-1)^{h-1}s^{-n-1}\{n\}/\{1\}$ to $a(\gamma,\mu)$. If $x'$ lies to the right of $Y'$ then it can be added last, but not first, and  we get the same contribution to $d(\gamma,\mu)$ with a changed sign, since $\mathrm{ht}(Y')=h$ in this case. Hence we have in either case a total of
 \beqn 
 a(\gamma,\mu)-d(\gamma,\mu)&=&(-1)^h (s^{n+1}-s^{-n-1}) \{n\}/\{1\}\\
&=& (-1)^{\mathrm{ht}(\gamma-\lambda)}\{n+1\}\frac{\{n\}}{\{1\}}
\eeqn
as claimed.  This completes the proof of  equation (\ref{full2}) and the proof of the theorem.
\end{proof}

\subsection{Further properties of $\cc$}

We collect here some results about $\cc$ as an algebra over $R$, and about its basis $Q_{\lambda,\mu}$.  The subalgebra $\cc^+$ spanned by $Q_{\lambda,\emptyset}$ is isomorphic to the ring $\Lambda$ of symmetric functions. The Schur function $s_\lambda$ corresponds to $Q_{\lambda,\emptyset}$. The identity element $1$ of $\cc$, represented by the  empty diagram, is given by $Q_{\emptyset,\emptyset}$. The element $P_n\in\cc$ lies in $\cc^+$ for $n>0$, and corresponds to the power sum $p_n\in\Lambda$. This interpretation of $\cc$ as symmetric functions was suggested in \cite{AM98}, with details established in \cite{Luk01,Luk05} and \cite{Mor02}.
 
\begin{remark}
 In the  case $\mu = \emptyset$, the formulae in theorem \ref{thm_Cactionfull} were already known.  Equation (\ref{full1}) appears in the context of meridian maps as \cite[Lemma 17]{MM08}.  
 
 A known result in the theory of symmetric   polynomials is  the expansion of the product of the $n$th power sum $p_n$ and the Schur function $ s_\lambda$ as a signed sum of Schur functions $s_\alpha$ where $\alpha-\lambda$ is an $n$-strip. Equation (\ref{full2}) then follows from the interpretation in \cite{MM08} of   $\cc^+$ as symmetric polynomials in which $P_n$ corresponds to $p_n$, and $Q_{\lambda,\emptyset}$ to    $s_\lambda$.  
\end{remark}

The subalgebra $\cc^+$ is spanned by closed braid diagrams in the annulus where all the strings go in the same direction.  Closed braids with strings in the reverse direction span an isomorphic subalgebra $\cc^-$, which is also spanned by $Q_{\emptyset,\mu}$.  Reversing string direction carries $Q_{\lambda,\mu}$ to $Q_{\mu,\lambda}$.
 Now $\cc$  can be written as $\cc\cong \cc^+\otimes\cc^-$, using Turaev's early description \cite{Tur88} of $\cc$ as a polynomial algebra. His generators $A_n$ are $P_{1,n}\cdot 1$ in the notation above, giving $\cc^+$ for $n>0$ and $\cc^-$ for $n<0$.
 
 We can then present the whole algebra $\cc$ as $\Lambda\otimes_R\Lambda$. We have already noted that  $P_k\in\cc^+$ represents $p_k$ when $k>0$ and hence $p_k\otimes 1$ in $\Lambda\otimes\Lambda$ while $P_{-k}$ for $k>0$ becomes $1\otimes p_{k}$. 
 
 The construction of the elements $Q_{\lambda,\mu}$ makes use of elements $h_n\in\cc^+$ corresponding to the complete symmetric functions in $\Lambda\otimes 1$ and $h_n^*\in\cc^-$, given by reversing the string direction, which become the complete symmetric functions in $1\otimes\Lambda$. 
 
 The formula in \cite{HM06} for $Q_{\lambda,\mu} $ is an extension of the classical Jacobi-Trudy formula for $s_\lambda$ as a polynomial in the complete symmetric functions. The general construction can be  illustrated by the case when $\lambda$ has parts ${2,2,1}$ and $\mu$ has parts ${3,2}$. Take a matrix with diagonal entries as shown, corresponding to the parts of $\lambda$ and $\mu$.

\[\begin{pmatrix} { h^*_2}&&&&\\
&{ h^*_3}&&&\\
&&{ h_2}&&\\
&&&{ h_2}&\\
&&&&{ h_1}
\end{pmatrix}\]

Complete the rows by shifting indices upwards for the parts of $\lambda$,
and downwards for the parts of $\mu$, to get
\[M=\begin{pmatrix} { h^*_2}&h^*_1&1&0&0\\
h^*_4&{ h^*_3}&h^*_2&h^*_1&1\\
1&h_1&{ h_2}&h_3&h_4\\
0&1&h_1&{ h_2}&h_3\\
0&0&0&1&{ h_1}
\end{pmatrix}\]

Then $Q_{\lambda,\mu}=\det M$.\\[2mm]

There is a further interesting interpretation for the whole of $\cc$, where we can consider $\Lambda\otimes\Lambda$ as symmetric functions in two sets of commuting variables $\xx$ and $\yy$, with the symmetric functions of $\xx$ representing the first copy of $\Lambda$ and the  symmetric functions of $\yy$ representing the second copy.

In this context there is a body of results stemming from work of King \cite{Kin70}, Koike \cite{Koi89} and subsequent authors in which such functions are studied, both as functions of two sets of variables, and in the special setting with $y_i=x_i^{-1}$ that deals with characters of $gl(N)$ for large $N$.  

Besides the Schur functions $s_\lambda(\xx)$ and $s_\lambda(\yy)$ King \cite{Kin70} defines `compound' Schur functions $s_{\lambda;\overline\mu}(\xx;\yy)$   by determinants that closely resemble those for $Q_{\lambda,\mu}$ in \cite{HM06}, or their counterpart in terms of elementary symmetric functions. As a result we can identify $Q_{\lambda,\mu}$ with the compound Schur function $s_ {\lambda:\overline\mu}$ of \cite{Kin70}. 

 Here are a few further facts about $\cc$ and its isomorphism with $\Lambda\otimes\Lambda$:
%
\begin{enumerate}
 \item The symmetry $a\otimes b \mapsto b\otimes a$ of $\cc$ sends $Q_{\lambda,\mu} \mapsto Q_{\mu,\lambda}$.
 \item The products $Q_{\alpha,\beta}Q_{\alpha',\beta'}$ expand as positive integer combinations in the basis $Q_{\lambda,\mu}$.
 \item The set $\{Q_{\lambda,\mu} \mid \lvert \lambda \rvert \leq n,\, \lvert \nu \rvert \leq p,\,\, \lvert \lambda \rvert - \lvert \mu\rvert = n - p\}$ spans  the subspace $\cc^{n,p}$ defined as the closure of $(n,p)$ diagrams in the square.
  \item $\cc = \oplus \cc^{n,p}$, where $\cc^{n,p} \subset \cc^{n+1,p+1}$ and $\cc^{n,p} \cap \cc^{n',p'} = 0$ if $n-p \not= n'-p'$.
 \item  $Q_{\lambda,\emptyset} = s_\lambda \otimes 1$, and $Q_{\emptyset,\mu} = 1\otimes s_\mu$.
 \item We have $Q_{\lambda,\mu} = Q_{\lambda,\emptyset}Q_{\emptyset,\mu} + v = (s_\lambda \otimes 1)(1\otimes s_\mu) + x$, for some $x \in \cc^{\lvert \lambda \rvert - 1,\lvert \mu \rvert - 1}$.
\end{enumerate}

\begin{remark}
 Fact 5 is established by Lukac, \cite[Ch.\ 3]{Luk01} or \cite{Luk05}. The other facts appear in \cite{HM06} and \cite{MH02}.  Fact 1 is immediate from  the determinantal formula  on reversing the orientations of all curves and rotating the matrix. Fact 2 is theorem 3.5 in \cite{HM06}, while facts 3,4 and 6 are in \cite{MH02}. Fact 6, along with a more detailed expression for $x$,  can also be deduced from \cite{Kin70} (see also \cite{Koi89}), when $Q_{\lambda,\mu}$ is interpreted in terms of compound Schur functions. \end{remark}

\section{The elliptic Hall algebra}\label{sec_ehall}
In this section we recall from \cite{BS12} a presentation of the elliptic Hall algebra $\E_{q,t}$, which depends on two parameters $q,t \in \C^*$. For the convenience of the next section, we will switch $t \mapsto t^{-1}$ from the notation of \cite{BS12}. We then prove that the $t=q$ specialization $\E_{q,q}$ is isomorphic to the Homflypt algebra $H_{s=q^{-1/2},v}$.

\begin{remark}
Before giving a presentation, we recall a short description of the construction of the algebra $\E_{q,t}$ from the introduction of \cite{BS12}. First, we consider a smooth elliptic curve $X$ over $\mathbb F_p$, and the category $\mathrm{Coh}(X)$ of coherent sheaves over $X$. The \emph{Hall algebra} of this category is a (topological) bialgebra $\E^+_{\sigma,\bar \sigma}$, where $\sigma, \bar \sigma$ are the Frobenius eigenvalues on the $l$-adic cohomology group $H^1(X_{\bar{\mathbb{F}}_p},\overline{\mathbb{Q}_l})$. It is proved in \cite{BS12} that the relations can be written entirely in terms of Laurent polynomials in these parameters, so we rename the parameters $q,t$ and allow them to be formal (i.e. $\E^+_{q,t}$ is an algebra over $\C[q^{\pm 1},t^{\pm 1}]$). Then $\E_{q,t}$ is the Drinfeld double of the algebra $\E^+_{q,t}$.
\end{remark}

As before, we will write $d(\xx) = gcd(a,b)$ if $\xx = (a,b)$, and $d(\xx,\yy) = \det[\xx\,\yy]$ for $\xx,\yy \in \Z^2$. Define the constant
\[
\alpha_i := (1-q^{i})(1-t^{-i})(1-q^{-i}t^{i}) / i
\]
\begin{definition}
By \cite[Thm.\ 5.4]{BS12}, the \emph{elliptic Hall algebra} $\E_{q,t}$ is generated by elements $u_\xx$ for $\xx \in \Z^2$, with relations
\begin{enumerate}
 \item If $\xx,\yy$ belong to the same line in $\Z^2$, then
 \[
  [u_\xx,u_\yy] = 0
 \]
\item If $\xx,\yy \in \Z^2$ are such that $d(\xx) = 1$ and $\Delta_{\xx,\yy}$ has no interior lattice points, then
\begin{equation}\label{eq_hallrelation1}
 [u_\yy,u_\xx] = \epsilon(\xx,\yy) \frac{\theta_{\xx+\yy}}{\alpha_1}
\end{equation}
\end{enumerate}
where $\epsilon(\xx,\yy) := \mathrm{sign}(d(\xx,\yy))$ and the elements $\theta_\xx$ are polynomials in the $u_{k\xx}$ defined for $d(\xx_0)= 1$ by equating the following series:
\begin{equation}\label{eq_hallrelation2}
 1 + \sum_{i > 0}\theta_{i \xx_0}z^i = \mathrm{exp}\Big(\sum_{r \geq 1}\alpha_r u_{r \xx_0}z^r\Big)
\end{equation}
where $z$ is a formal variable.
\end{definition}

\begin{remark}\label{rmk_sl2actiononEhall}
By \cite[Lemma 5.3]{BS12}, the group $\SL_2(\Z)$ acts on $\E_{q,t}$ via $\gamma(u_\xx) = u_{\gamma(\xx)}$.
\end{remark}

These relations look somewhat similar to the commutation relations for $H$, but they are complicated by the definition of $\theta_\xx$. The key observation is that if $t=q$, then the element $\theta_\xx$ defined in (\ref{eq_hallrelation2}) simplifies substantially.
\begin{lemma}
 If $t=q$, then 
 \[
 \frac{\theta_\xx}{\alpha_1} = \left([d(\xx)]_{q^{1/2}}\right)^2 u_\xx
 \]
\end{lemma}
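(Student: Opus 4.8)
The plan is to treat the quotient $\theta_\xx/\alpha_1$ as a rational expression in the parameters $q,t$ and to compute its value in the limit $t \to q$, since a direct substitution $t = q$ makes both $\theta_\xx$ and $\alpha_1$ vanish identically. Indeed, the factor $(1 - q^{-i}t^i)$ in $\alpha_i := (1-q^i)(1-t^{-i})(1-q^{-i}t^i)/i$ vanishes to first order at $t = q$ for every $i$, while the remaining factors $(1-q^i)$ and $(1-t^{-i})$ stay nonzero there (for $q$ not a root of unity). Thus each $\alpha_i$ vanishes to \emph{exactly} first order as $t \to q$, so the apparent $0/0$ singularity in $\theta_\xx/\alpha_1$ is removable and computing the limit is legitimate.

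First I would write $\xx = d\,\xx_0$ with $\xx_0$ primitive and $d = d(\xx)$, and extract $\theta_{d\xx_0}$ from the generating-function relation (\ref{eq_hallrelation2}) by expanding the exponential: the coefficient of $z^d$ is the sum over all $(m_r)_{r\geq 1}$ with $\sum_r r\,m_r = d$ of $\prod_r (\alpha_r u_{r\xx_0})^{m_r}/m_r!$. Because each $\alpha_r$ vanishes to first order at $t=q$, the term indexed by $(m_r)$ vanishes to order $\sum_r m_r$, i.e. to the order equal to its number of parts. Hence, after dividing by $\alpha_1$ (order $1$) and letting $t\to q$, every term coming from a partition of $d$ with two or more parts contributes $0$, and the only surviving term is the single-part partition $m_d = 1$, giving $\theta_{d\xx_0} = \alpha_d u_{d\xx_0} + O((t-q)^2)$ and therefore
\[
\lim_{t\to q}\frac{\theta_\xx}{\alpha_1} = \Big(\lim_{t\to q}\frac{\alpha_d}{\alpha_1}\Big)\,u_\xx.
\]

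It then remains to evaluate $\lim_{t\to q}\alpha_d/\alpha_1$. The non-vanishing factors are evaluated directly at $t = q$, while the two vanishing factors (linear in $(t-q)$) are handled by L'Hôpital or a one-term Taylor expansion, which yields $\lim_{t\to q}(1-q^{-d}t^d)/(1-q^{-1}t) = d$; combining this with the $1/d$ from the definition of $\alpha_d$ gives
\[
\lim_{t\to q}\frac{\alpha_d}{\alpha_1} = \frac{(1-q^d)(1-q^{-d})}{(1-q)(1-q^{-1})} = \frac{(q^{d/2}-q^{-d/2})^2}{(q^{1/2}-q^{-1/2})^2} = \big([d]_{q^{1/2}}\big)^2,
\]
using the identity $(1-q^d)(1-q^{-d}) = -(q^{d/2}-q^{-d/2})^2$ and its $d=1$ specialization in the denominator. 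This is the claimed formula. The only genuine subtlety — and the step I would write most carefully — is the bookkeeping of orders of vanishing that shows the multi-part terms die after division by $\alpha_1$; everything else reduces to a routine computation with quantum integers.
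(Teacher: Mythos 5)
Your proof is correct and takes essentially the same approach as the paper's: both expand the exponential in (\ref{eq_hallrelation2}), observe that a term with $k$ factors of the $\alpha_r$ vanishes to order $k$ at $t=q$ so that only the linear term survives after dividing by the simple zero of $\alpha_1$, and then conclude via the identity $\lim_{t\to q}\alpha_{d}/\alpha_1 = \bigl([d]_{q^{1/2}}\bigr)^2$. The only difference is that you verify this last identity explicitly (via L'H\^opital on the factor $(1-q^{-d}t^d)/(1-q^{-1}t)$), whereas the paper simply asserts it.
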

\begin{proof}
Each constant $\alpha_i$ has a zero of order 1 at $t=q$. If we write the RHS of (\ref{eq_hallrelation2}) as $\mathrm{exp}(a)$, then only the terms of degree 1 in $a$ have a simple zero at $t=q$. Therefore, if we specialize $t=q$, the only surviving term in $\mathrm{exp}(a) / \alpha_1$ is $a/\alpha_1$, so the identity $\left(\alpha_i / \alpha_1\right)|_{t=q} = \left( [i]_{q^{1/2}}\right)^2$ shows the claim.
\end{proof}

\begin{corollary}\label{lemma_relatteq1}
 If $t=q$, then the following relations are satisfied:
\begin{align}
  \,[u_{1,0}, u_{-1,k}] &= - \mathrm{sign}(k) \left([k]_{q^{1/2}}\right)^2 u_{0,k}\notag\\
  \,[u_{1,0}, u_{0,k}] &=  - \mathrm{sign}(k)  u_{1,k}\notag
 \end{align}
\end{corollary}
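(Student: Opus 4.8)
The plan is to obtain both identities by a single application of the defining relation (\ref{eq_hallrelation1}), specialized to $t=q$ via the preceding lemma, after choosing the two input vectors so that the hypotheses $d(\xx)=1$ and ``$\Delta_{\xx,\yy}$ has no interior lattice points'' are met. Throughout I assume $k\neq 0$, and I record that $\epsilon(\xx,\yy)=\mathrm{sign}(d(\xx,\yy))$ and that, by the preceding lemma, $\theta_{\xx+\yy}/\alpha_1 = ([d(\xx+\yy)]_{q^{1/2}})^2\,u_{\xx+\yy}$ whenever $t=q$. So the substance reduces to identifying $\xx,\yy$, computing the determinant sign, and reading off $\xx+\yy$.

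For the first identity I take $\yy=(1,0)$ and $\xx=(-1,k)$, so that $[u_\yy,u_\xx]=[u_{1,0},u_{-1,k}]$ is exactly the commutator in question. Here $d(\xx)=\gcd(1,k)=1$, and $\xx+\yy=(0,k)$. The determinant is $d(\xx,\yy)=\det[\,\xx\ \yy\,]=-k$, so $\epsilon(\xx,\yy)=-\mathrm{sign}(k)$. Since $d(0,k)=|k|$ and $([|k|]_{q^{1/2}})^2=([k]_{q^{1/2}})^2$, the lemma gives $\theta_{(0,k)}/\alpha_1=([k]_{q^{1/2}})^2 u_{0,k}$, and (\ref{eq_hallrelation1}) yields $[u_{1,0},u_{-1,k}]=-\mathrm{sign}(k)([k]_{q^{1/2}})^2 u_{0,k}$, as claimed.

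For the second identity the naive choice $\xx=(0,k)$ is not allowed, since $d(0,k)=|k|\neq 1$ when $|k|>1$. I therefore flip the commutator and apply (\ref{eq_hallrelation1}) to $[u_{0,k},u_{1,0}]$, i.e.\ with $\yy=(0,k)$ and $\xx=(1,0)$. Now $d(\xx)=1$, $\xx+\yy=(1,k)$ with $d(1,k)=1$, and $d(\xx,\yy)=\det[\,(1,0)\ (0,k)\,]=k$, so $\epsilon(\xx,\yy)=\mathrm{sign}(k)$. The lemma gives $\theta_{(1,k)}/\alpha_1=([1]_{q^{1/2}})^2 u_{1,k}=u_{1,k}$, whence $[u_{0,k},u_{1,0}]=\mathrm{sign}(k)u_{1,k}$ and therefore $[u_{1,0},u_{0,k}]=-\mathrm{sign}(k)u_{1,k}$.

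The only substantive point to verify is the hypothesis that the relevant triangles carry no interior lattice points. For the first identity $\Delta_{\xx,\yy}$ has vertices $0$, $(-1,k)$, $(0,k)$, and for the second it has vertices $0$, $(1,0)$, $(1,k)$; in each case the triangle has area $|k|/2$, two of its edges are primitive (hence carry no interior lattice points) and the third is a vertical segment contributing $|k|-1$ interior boundary points, so Pick's formula $\mathrm{Area}=I+B/2-1$ forces $I=0$. I expect this lattice-geometry check, together with the bookkeeping of the sign $\epsilon$ and the need to reverse the commutator in the second case (to meet $d(\xx)=1$), to be the only places requiring care; the algebra itself is immediate once the preceding lemma is in hand.
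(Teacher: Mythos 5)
Your proposal is correct and is exactly the argument the paper leaves implicit: the corollary is stated without proof as an immediate consequence of relation (\ref{eq_hallrelation1}) and the preceding lemma, with precisely your choices of $\xx,\yy$ (including the flip $[u_{0,k},u_{1,0}]$ needed to satisfy $d(\xx)=1$ in the second case). Your sign bookkeeping, the observation that $([-k]_{q^{1/2}})^2=([k]_{q^{1/2}})^2$, and the Pick's-theorem verification that $\Delta_{\xx,\yy}$ has no interior lattice points are all accurate fillings-in of the details the paper suppresses.
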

We now define renormalized generators
$
w_\xx :=\left(q^{d(\xx)/2} - q^{-d(\xx)/2}\right) u_\xx
$.
\begin{theorem}\label{thm_HisotoE}
 If we specialize $q=t$ and identify $t = q = s^{-2}$, then the map $P_\xx \mapsto w_\xx$ extends to an $\SL_2(\Z)$-equivariant $\Z^2$-graded isomorphism of algebras $H_{s,v} \to \E_{q=s^{-2},t=s^{-2}}$.
\end{theorem}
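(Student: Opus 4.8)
The plan is to show that, under the identification $q = t = s^{-2}$, the defining presentation of $\E_{q,q}$ matches the presentation of $H$ from Corollary~\ref{cor_basis} after the invertible rescaling $P_\xx \leftrightarrow w_\xx$. Since $R$ inverts every $\{d\}$, the scalar $w_\xx = (q^{d(\xx)/2} - q^{-d(\xx)/2})u_\xx = -\{d(\xx)\}u_\xx$ is a unit multiple of $u_\xx$, so $P_\xx \mapsto w_\xx$ is a bijective relabeling of generating sets; proving the theorem thus reduces to checking that the two relation ideals correspond. I record the identities $[d]_{q^{1/2}} = \{d\}/\{1\}$ and $q^{d/2} - q^{-d/2} = -\{d\}$, which are used throughout.

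First I would produce the homomorphism $\phi \colon H \to \E_{q,q}$, $P_\xx \mapsto w_\xx$. By Corollary~\ref{cor_basis} it suffices to check that the $w_\xx$ satisfy~(\ref{formula_allrelations}). A short rescaling computation turns Corollary~\ref{lemma_relatteq1} into the relations~(\ref{formula_somerelations}) for the $w_\xx$ (the $\mathrm{sign}(k)$ factors cancel against the $\{d\}$ introduced by the rescaling, and the same-line relation is immediate from relation~(1) of the presentation). To upgrade these to all of~(\ref{formula_allrelations}) I would invoke Proposition~\ref{prop_allfromsome}, which needs a $\GL_2(\Z)$-action on $\E_{q,q}$ with $\gamma(w_\xx) = w_{\gamma(\xx)}$. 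The $\SL_2(\Z)$-part is Remark~\ref{rmk_sl2actiononEhall}, and since $\gcd$ is $\GL_2(\Z)$-invariant it is compatible with the rescaling. The determinant $-1$ reflection $\rho = \mathrm{diag}(1,-1)$ must be supplied as an anti-automorphism $\sigma(u_\xx) = u_{\rho(\xx)}$; this is a quick check on the presentation, the only point being $\sigma(\theta_\xx) = \theta_{\rho(\xx)}$, which holds because the $u_{k\xx_0}$ on a fixed line commute (so $\sigma$ is an ordinary homomorphism there) and the $\alpha_r$ are central scalars, so $\sigma$ fixes the series~(\ref{eq_hallrelation2}). Then $\phi$ is a well-defined $\Z^2$-graded, $\SL_2(\Z)$-equivariant algebra map, and it is surjective because $u_\xx = -\{d(\xx)\}^{-1}w_\xx$ lies in its image.

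For injectivity I would construct a two-sided inverse $\psi \colon \E_{q,q} \to H$ on generators by $u_\xx \mapsto -\{d(\xx)\}^{-1}P_\xx$, checking that it respects relations~(1) and~(2) of $\E_{q,q}$. Relation~(1) is immediate. Using the simplification $\theta_{\xx+\yy}/\alpha_1 = [d(\xx+\yy)]^2\,u_{\xx+\yy}$ at $t=q$, relation~(2) translates, via $[P_\yy,P_\xx] = \{d(\yy,\xx)\}P_{\xx+\yy}$, into the scalar identity
\begin{equation*}
\{1\}\,\{\lvert d(\yy,\xx)\rvert\} = \{d(\yy)\}\,\{d(\xx+\yy)\},
\end{equation*}
valid whenever $d(\xx) = 1$ and $\Delta_{\xx,\yy}$ has no interior lattice point. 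This is the genuinely new input. I would prove it by Pick's theorem: counting boundary lattice points of the triangle with vertices $0,\xx,\xx+\yy$ gives $\lvert d(\yy,\xx)\rvert = d(\yy) + d(\xx+\yy) - 1$, while the elementary facts $\gcd(d(\yy),d(\xx+\yy)) = 1$ and $d(\yy)\,d(\xx+\yy) \mid \lvert d(\yy,\xx)\rvert$ force $(d(\yy)-1)(d(\xx+\yy)-1) = 0$, i.e.\ $\min(d(\yy),d(\xx+\yy)) = 1$; the identity then follows from $\{m\}\{n\} = \{m+n\}^+ - \{m-n\}^+$. Since~(1) and~(2) are all of the defining relations, $\psi$ is well defined, and $\psi\phi = \mathrm{id}_H$, $\phi\psi = \mathrm{id}_{\E_{q,q}}$ hold on generators, giving the isomorphism.

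The main obstacle is this last step: recognizing that the opaque hypothesis ``$\Delta_{\xx,\yy}$ has no interior lattice point'' is exactly what makes the scalar identity hold, and pinning it down through Pick's theorem together with the coprimality of $d(\yy)$ and $d(\xx+\yy)$. A secondary technical point is establishing the reflection anti-automorphism on $\E_{q,q}$ needed to apply Proposition~\ref{prop_allfromsome}; one could instead avoid it by proving~(\ref{formula_allrelations}) for the $w_\xx$ by hand along the inductive scheme of that proposition, using only the $\SL_2(\Z)$-action and the swap $\xx \leftrightarrow \yy$.
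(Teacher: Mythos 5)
Your proposal is correct, and while your construction of the forward map coincides with the paper's argument, your proof of bijectivity takes a genuinely different route. Like the paper, you obtain $\phi\colon H \to \E_{q,q}$, $P_\xx \mapsto w_\xx$, by rescaling Corollary \ref{lemma_relatteq1} into the relations (\ref{formula_somerelations}) and invoking Proposition \ref{prop_allfromsome}; in doing so you actually patch a detail the paper elides, since Remark \ref{rmk_sl2actiononEhall} only furnishes an $\SL_2(\Z)$-action while Proposition \ref{prop_allfromsome} hypothesizes a $\GL_2(\Z)$-action by (anti-)automorphisms, and your check that $\sigma(u_\xx) = u_{\rho(\xx)}$ with $\rho = \mathrm{diag}(1,-1)$ defines an anti-automorphism of $\E_{q,t}$ is sound (the $u_{k\xx_0}$ on a line commute, $\rho$ preserves lattice-point counts of the triangles $\Delta_{\xx,\yy}$, and $\epsilon(\rho\xx,\rho\yy) = -\epsilon(\xx,\yy)$ exactly compensates the commutator reversal). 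The real divergence is injectivity: the paper deduces it by matching Przytycki's basis of $H$ (Corollary \ref{cor_basis}) against the PBW basis of $\E_{q,t}$ from \cite[Thm.\ 4.8]{BS12}, whereas you build a two-sided inverse $\psi(u_\xx) = -\{d(\xx)\}^{-1}P_\xx$ and verify the defining relations of $\E_{q,q}$ inside $H$. Your reduction of relation (2), via the $t=q$ simplification of $\theta_{\xx+\yy}$, to the scalar identity $\{1\}\,\{\lvert d(\yy,\xx)\rvert\} = \{d(\yy)\}\,\{d(\xx+\yy)\}$ is the right computation, and your lattice-geometry argument does prove it: Pick's theorem gives $\lvert d(\yy,\xx)\rvert = d(\yy) + d(\xx+\yy) - 1$, while $\gcd(d(\yy), d(\xx+\yy)) = 1$ (a common prime divisor would divide $\xx$, contradicting $d(\xx)=1$) and both $d(\yy)$ and $d(\xx+\yy)$ divide $\det[\yy\ \xx] = \det[\xx+\yy\ \ \xx]$, so their product does, forcing $(d(\yy)-1)(d(\xx+\yy)-1) = 0$; once $\min(d(\yy), d(\xx+\yy)) = 1$ the identity is immediate (the $\{m\}\{n\}$ product formula you cite is not even needed at that point). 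As for what each approach buys: the paper's is shorter but imports the nontrivial PBW theorem of \cite{BS12}, implicitly at the specialized value $t=q$; yours is self-contained given the two presentations, requiring no basis of either algebra, at the cost of the Pick's-theorem lemma, and as a by-product it re-derives the structural fact that the opaque hypothesis ``$\Delta_{\xx,\yy}$ has no interior lattice point'' forces one of $d(\yy)$, $d(\xx+\yy)$ to equal $1$, which is precisely why the Hall-algebra relations close up in $H$.
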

\begin{proof} 
We first remark that $\SL_2(\Z)$ acts by permutation on the generators $w_\xx$ by Remark \ref{rmk_sl2actiononEhall} (since it preserves the $gcd$ of the entries of vectors). If we rewrite the relations of Corollary \ref{lemma_relatteq1} in terms of $w_\xx$ and the parameter $s$, we obtain
 \begin{align}
  \,[w_{1,0}, w_{-1,k}] &= \{k\}_s w_{0,k}\notag\\
  \,[w_{1,0}, w_{0,k}] &=  \{k\}_s w_{1,k}\notag
 \end{align}
 These are the same as the relations (\ref{formula_somerelations}), so Remark \ref{rmk_sl2actiononEhall} combined with Proposition \ref{prop_allfromsome} shows that the $w_\xx$ also satisfy the relations (\ref{formula_allrelations}). The map is clearly surjective, and it is injective because the description of the basis of $H$ in Corollary \ref{cor_basis} agrees with the PBW basis of $\E_{q,t}$ described in \cite[Thm.\ 4.8]{BS12}.
\end{proof}

\begin{remark}
 There is $S_3$ symmetry in the parameters $\{q,t,qt^{-1}\} \in (\C^*)^3$, so if we specialize $s^2=q^{-1}$ and $t=1$ the previous theorem remains true. 
\end{remark}

\section{Adaptations of the Homflypt skein relations}\label{sec_adaptations}

There are a number of instances, for example in the context of families of Hecke algebras of type $A$, or in relation to quantum $\SL_N$ modules and associated invariants, where  Homflypt skeins can be used as models after a simple adaptation.

The simplest model of the Hecke algebra $H_n$ of type $A_{n-1}$ is   the Homflypt skein of oriented framed $n$-tangles, using diagrams in a rectangle with $n$ inputs at the bottom and $n$ outputs at the top \cite{MT90}. Composition is induced by stacking diagrams and the algebra is generated by the elementary $n$-braids \bc $\sigma_i$\quad=\quad \labellist\small
\pinlabel {$i$} at 200 405
\pinlabel {$i+1$} at 250 405
\endlabellist \sigmaior.\\[6mm]\ec

Write $T_i$ for the element of the skein represented by $\sigma_i$. The basic skein relation  gives the equation $T_i -T_i^{-1}=(s-s^{-1}) {\rm Id}$, and hence the quadratic relation \[(T_i -s)(T_i +s^{-1})=0,\] with roots $s, -s^{-1}$.

Many algebraic accounts use a version of the Hecke algebra where the quadratic has roots $q, -1$, so it is useful to adapt the skein theory to allow for roots $xs, -xs^{-1}$ with an extra parameter $x$. This is done by Aiston and Morton in \cite{AM98} for the Hecke algebra, and subsequently used in the form below for other skeins.

\subsection{The adaptable Homflypt skein}

Use $R[x^{\pm1}]$-linear combinations of framed oriented curves in a $3$-manifold $M$, possibly including arcs with fixed input and output points on $\partial M$, subject to the relations
\bc
$x^{-1}\Xor - x\Yor=(s-s^{-1})\ \Ior$ \qquad (Switch and smooth)\\[2mm]

$\Rcurlor=xv^{-1}\ \Idor\ ,\qquad \Lcurlor =x^{-1}v\ \Idor$ \qquad (Framing change)\\[2mm]
\ec
with the local blackboard framing convention. The resulting skein $H_x(M)$ provides the relations $x^{-1}T_i -xT_i^{-1}=(s-s^{-1}){\rm Id}$ and hence the quadratic relation with roots $xs, -xs^{-1}$.

This is useful in several instances.
\begin{itemize}
\item
Take $x=s$ and set $q=s^2$ to recover the algebraic version of the Hecke algebra with roots $q, -1$.

\item
Take $x=v$ to eliminate the framing dependence.
\item
Take $s=e^{h/2}, v=s^{-N}, x=e^{-h/2N}=s^{-1/N}$ to adjust for the quadratic relation satisfied by the fundamental $R$-matrix of the $SL_N$ quantum group, and the effect of framing change when constructing knot invariants. \cite{Ais96, MM08}
\end{itemize}

Much of Aiston's original work uses these adaptable relations, with $x$ as an indeterminate alongside $v$ and $s$ in the coefficient ring.

Clearly, knowing the skein $H_x(M)$ we can find the basic skein $H(M)=H_1(M)$ by setting $x=1$.  
 Lukac suggested how to reverse the process in many instances and recover $H_x(M)$ from $H(M)$, so that we can work  without $x$, while still being in a position to adapt if needed.

\begin{theorem}  When $M=F\times I$ is a thickened surface there is a linear isomorphism $f_x:H(M)\to H_x(M)$.
\end{theorem}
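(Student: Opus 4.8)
The plan is to build $f_x$ by renormalizing each diagram by a power of $x$ read off from its writhe, which is the idea Lukac suggested. For a link diagram $D$ in $F$, drawn with the blackboard framing, let $w(D)\in\Z$ be its \emph{writhe}, the sum of the signs of all of its crossings (self-crossings and crossings between components alike). The move $R_{II}$ inserts or deletes a pair of crossings of opposite sign and $R_{III}$ preserves every crossing sign, so $w(D)$ is invariant under $R_{II}$ and $R_{III}$; since these are exactly the regular-isotopy moves built into both $H(M)$ and $H_x(M)$, the integer $w(D)$ depends only on the isotopy class of the underlying framed diagram. I would then set $f_x(D)=x^{-w(D)}D$ on diagrams and extend $R[x^{\pm1}]$-linearly; precisely, $f_x$ is the $R[x^{\pm1}]$-linear map $H(M)\otimes_R R[x^{\pm1}]\to H_x(M)$ determined by this rule, and it is in this base-changed sense that ``linear isomorphism'' should be read.

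The heart of the argument is to check that $f_x$ sends each defining relation of $H(M)$ to a scalar multiple of the corresponding relation of $H_x(M)$, so that it descends to the quotient. Fix the local ball of the switch-and-smooth relation and let $w_0$ be the writhe contributed by the rest of the diagram, which is common to all three terms. The positive crossing, the negative crossing, and the oriented smoothing then have writhes $w_0+1$, $w_0-1$, and $w_0$, so applying $f_x$ to the relation of $H(M)$ produces $x^{-w_0}$ times the expression $x^{-1}(\text{positive}) - x(\text{negative}) - (s-s^{-1})(\text{smoothing})$, which is exactly the switch-and-smooth relation of $H_x(M)$. The two framing relations are handled the same way: a positive curl has writhe $w_0+1$ against the trivial strand's $w_0$, giving $x^{-w_0-1}$ times the $H_x$ relation with weight $xv^{-1}$, and a negative curl has writhe $w_0-1$, giving $x^{-w_0+1}$ times the $H_x$ relation with weight $x^{-1}v$. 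Thus $f_x$ kills the submodule of relations and is well defined on $H(M)\otimes_R R[x^{\pm1}]$.

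Invertibility is then immediate: the same recipe with $x$ replaced by $x^{-1}$, i.e.\ $D\mapsto x^{w(D)}D$, gives a map $g_x$ in the opposite direction, and since $w$ is unchanged the two are mutually inverse on diagrams and hence on the skeins. The hypothesis $M=F\times I$ enters in exactly one place and is essential: it is what lets us present skein classes by diagrams in $F$ and makes the total writhe a well-defined $R_{II}/R_{III}$ invariant, so the normalizing scalar $x^{-w(D)}$ is unambiguous. The main obstacle is precisely this well-definedness, not the construction of an inverse: one must confirm that a \emph{single} integer $w(D)$ reconciles all three local relations at once. This works because the positive crossing and the positive curl each carry one more unit of writhe than their resolutions while the negative crossing and the negative curl carry one less, so the uniform factor $x^{-w(D)}$ converts each relation of $H(M)$ into the $x$-weighted relation of $H_x(M)$ with matching powers. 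I would also take care to record the base-change interpretation above, so that $f_x$ is an honest isomorphism of $R[x^{\pm1}]$-modules rather than merely an $R$-linear injection into a larger module.
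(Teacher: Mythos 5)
Your proof is correct and follows essentially the same route as the paper's: both define $f_x(D)=x^{-w(D)}D$ via the writhe (well defined since $w$ is invariant under $R_{II}$ and $R_{III}$, which is exactly where $M=F\times I$ is used) and verify that each skein relation of $H(M)$ is carried to an $x^{-w_0}$-multiple of the corresponding relation of $H_x(M)$. Your additional remarks on the explicit inverse $D\mapsto x^{w(D)}D$ and on reading the statement over the extended ring $R[x^{\pm1}]$ are sound points that the paper leaves implicit.
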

\begin{proof} Represent each union of framed curves in $M$ by a diagram $D$ on $F$ with the blackboard framing. The allowed changes in the curves  alter $D$ by isotopy in $F$ and Reidemeister moves $R_{II}, R_{III}$. The \emph{writhe} of $D$, $w(D)$, defined as the sum of the signs of the crossings in $D$,  then depends only on the curves in $M$ and not on the choice of representing diagram.

Define $f_x$ on diagrams by \[f_x(D)=x^{-w(D)} D.\]

To prove that this induces a well-defined map on $H(M)$ we must show that the skein relations are respected.

For the switch and smooth relation we must show that \[f_x(D_+)- f_x(D_-)=(s-s^{-1})f_x(D_0)\] in $H_x(M)$, 
where three diagrams $D_+, D_-, D_0$ differ only by switching or smoothing a crossing. 

Now the writhes of $D_{\pm}, D_0$ satisfy $w(D_+)=w+1, w(D_-)=w-1$ where $w=w(D_0)$, so 
\[f_x(D_+)- f_x(D_-)=x^{-w-1}D_+-x^{-w+1}D_-=x^{-w}(s-s^{-1})D_0 =(s-s^{-1})f_x(D_0)\] in $H_x(M)$.

Similarly, for the framing change, $w\left(\Rcurlor\right)=w+1$ where $w=w\left(\Idor\right)$, so in $H_x(M)$ we have
\[f_x\left(\Rcurlor\right)=x^{-w-1}\  \Rcurlor=x^{-w}v^{-1}\ \Idor=v^{-1}f_x\left(\Idor\right).\]
\end{proof}

For example, if we need to adapt the element $P_m=(s-s^{-1})/(s^m-s^{-m}) X_m$ from our algebra $H$ above to $H_x$ we replace $X_m=\sum A_{i,j}$ by $\sum x^{j-i}A_{i,j}$ as in Aiston's original version. The product $P_\xx P_\yy$ in $H$ is replaced by $x^{-k}P_\xx P_\yy$ on passing to $H_x$ where we use the adapted $P_\xx,P_\yy$ in $H_x$, and set $k=\det[\xx\  \yy]$. This implies the following corollary of Theorem \ref{thm_commutationrelations}:
\begin{corollary}
Using the general Homflypt skein relations in this section (with parameters $x,s,v$), the algebra $H$ is generated by elements $P_\xx$ with relations
\[ 
x^{-k}P_\xx P_\yy- x^kP_\yy P_\xx = \{d\}P_{\xx,\yy} 
\]
where $d = \det [\xx\,\yy]$.
\end{corollary}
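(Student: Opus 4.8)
The plan is to deduce the adapted relation directly from the commutation relation of Theorem~\ref{thm_commutationrelations} by transporting it along the linear isomorphism $f_x\colon H(T^2)\to H_x(T^2)$ constructed above, rather than by redoing any skein manipulation. Recall that $f_x$ acts on a diagram $D$ by $f_x(D)=x^{-w(D)}D$, so although $f_x$ is $R[x^{\pm1}]$-linear it is not multiplicative; the whole argument hinges on controlling exactly how it fails to be multiplicative on a product of two curves.

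First I would record the key multiplicativity-up-to-writhe identity. If $D_1,D_2$ are diagrams on $T^2$, then stacking them to form the product $D_1D_2$ creates no new self-crossings of either curve and introduces only crossings between $D_1$ and $D_2$; hence $w(D_1D_2)=w(D_1)+w(D_2)+c(D_1,D_2)$, where $c(D_1,D_2)$ is the signed number of crossings between the two curves. Applying the definition of $f_x$ to each factor gives
\[
 f_x(D_1D_2)=x^{-c(D_1,D_2)}\,f_x(D_1)\,f_x(D_2).
\]
The crucial point is that when $D_1$ ranges over the diagrams appearing in $P_\xx$ and $D_2$ over those in $P_\yy$, the signed crossing number $c(D_1,D_2)$ is constant and equal to $d=\det[\xx\,\yy]$, since by the Remark in Section~\ref{sec_diagrams} it depends only on the homology classes $\xx,\yy$. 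Thus the correction factor pulls out uniformly from the linear combinations, giving $f_x(P_\xx P_\yy)=x^{-d}f_x(P_\xx)f_x(P_\yy)$ and, with the orders reversed, $f_x(P_\yy P_\xx)=x^{d}f_x(P_\yy)f_x(P_\xx)$ because $\det[\yy\,\xx]=-d$.

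Next I would apply $f_x$ to the identity $[P_\xx,P_\yy]=\{d\}P_{\xx+\yy}$. Since $\{d\}=s^d-s^{-d}$ is a scalar in $R$ and $f_x$ is $R[x^{\pm1}]$-linear, the right-hand side becomes $\{d\}f_x(P_{\xx+\yy})$, while the previous paragraph evaluates the left-hand side as $x^{-d}f_x(P_\xx)f_x(P_\yy)-x^{d}f_x(P_\yy)f_x(P_\xx)$. Writing $P_\xx$ now for its image $f_x(P_\xx)$, which is precisely Aiston's adapted element in $H_x$, this reads
\[
 x^{-k}P_\xx P_\yy-x^{k}P_\yy P_\xx=\{d\}P_{\xx+\yy},\qquad k=d=\det[\xx\,\yy],
\]
as claimed. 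The presentation statement then follows formally: $f_x$ is an isomorphism, so it carries the generators-and-relations description of $H(T^2)$ from Corollary~\ref{cor_basis} to one of $H_x(T^2)$, and the computation above identifies the transported relations with the displayed ones.

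I expect the only genuinely delicate step to be the uniformity and sign bookkeeping in the second paragraph: one must be sure that the signed crossing number is the same for every pair of representing diagrams, so that the scalar $x^{-d}$ factors out of the linear combination defining $P_\xx P_\yy$, and that it changes sign under reversal of the product order. Both are consequences of the homological invariance of the signed crossing number recorded earlier, so no new geometric input beyond Theorem~\ref{thm_commutationrelations} is needed.
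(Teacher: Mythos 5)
Your proof is correct and follows essentially the same route as the paper: the paper likewise defines the adapted elements as the $f_x$-images (e.g.\ replacing $X_m=\sum A_{i,j}$ by $\sum x^{j-i}A_{i,j}$) and observes that the product $P_\xx P_\yy$ acquires the factor $x^{-k}$ with $k=\det[\xx\ \yy]$ on passing to $H_x$, then cites Theorem \ref{thm_commutationrelations}. Your write-up merely makes explicit the writhe additivity $w(D_1D_2)=w(D_1)+w(D_2)+c(D_1,D_2)$ and the homological invariance of $c(D_1,D_2)$ that the paper leaves implicit, which is exactly the right bookkeeping.
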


The quadratic relations used by Schiffman, Vasserot, and Cherednik correspond to the basic skein, so there is largely no need for adaptation.  However, comparison with the results  of Frohman and Gelca \cite{FG00} for the Kauffman bracket skein  needs the adaptation, after orienting, of $x=-A^{-1}$, $s=A^{-2}$ and $v=A^{-4}$.

\section{Iterated Cables}\label{sec_iteratedcables}
Let $K$ be an iterated cable of the unknot and $\lambda$ a partition. In this section we use the isomorphism between the elliptic Hall algebra and the Homflypt skein algebra to construct a 3-variable polynomial that specializes to the $\lambda$-colored Homflypt polynomial of $K$ (up to a monomial $s^\bullet v^\bullet$). This can be considered to be an $\sl_\infty$ version of the construction in \cite{Sam14} for $\g = \sl_2$, which was generalized in \cite{CD14} to arbitrary $\g$. Our construction uses the work of Schiffmann and Vasserot \cite{SV13, SV11} in an essential way, and when restricted to torus knots, it is essentially the same as the construction of Gorsky and Negut \cite{GN13}. We will use our construction to prove a conjecture of Cherednik and Danilenko in \cite{CD14}.

We first establish some notation. Throughout the section, $\mm = (m_1,\ldots,m_k)$ and $\nn = (n_1,\ldots,n_k)$ will be sequences of integers with $m_i, n_i$ relatively prime and $m_i>0$. We will write $\Lambda^N := \C[x_1,\cdots,x_N]^{S_N}$ for the graded ring of symmetric polynomials, and $\Lambda$ for the (graded) ring of symmetric functions, which is $N \to \infty$ limit of the $\Lambda^N$. We write $\pi_N: \Lambda \to \Lambda^{N}$ for the natural projection.

There are three algebras that we will use in this section: the Homflypt skein algebra $H$, the elliptic Hall algebra $\E_{q,t}$, and the double affine Hecke algebra $\H^N_{q,t}$ (defined below). In general we will use superscripts $H$, $\E$, and $N$ to distinguish between objects associated to these three algebras. For example, associated to the sequences $\mm,\nn$ and a partition $\lambda$ we will define two polynomials using the representation theory of $\E_{q,t}$ and $\H^N_{q,t}$, respectively:
\begin{equation*}
J^\E(\mm,\nn,\lambda; q,t,u) \in \C[q^\pmone, t^\pmone,u^\pmone],\quad J^N(\mm,\nn,\lambda; q,t) \in \C[q^\pmone, t^\pmone]
\end{equation*}

(Technically, we actually define rational functions - see Remark \ref{rmk_rational}.) We will relate these polynomials to the colored Homflypt polynomial of the iterated cable $K(\mm,\nn)$ of the unknot determined by the sequences $\mm$ and $\nn$. We first define the notion of iterated cable that we will use:

\begin{definition}\label{def_ourcable}
Let $K$ be a framed knot, let $T$ be the torus which bounds a neighborhood of $K$, and let $L_{fr}$ be the longitude in $T$ determined by the framing of $K$. 
\begin{enumerate}
 \item The \emph{algebraic} $(m,n)$ cable of $K$ is the framed knot in $T$ such that
\[ 
K(m,n) \sim m L_{fr} + n M
\]
(In this notation, the symbol $\sim$ means `is homologous to' and $M$ is the meridian of $K$.) The framing of $K(m,n)$ is defined to be parallel to the torus $T$.  
\item We then define a framed knot $K(\mm,\nn)$ inductively as follows: $K(m_1,n_1)$ is the algebraic $(m_1,n_1)$ cable of the 0-framed unknot, and $K(\mm_k,\nn_k)$ is the algebraic $(m_k,n_k)$ cable of $K(\mm_{k-1},\nn_{k-1})$. 
\end{enumerate}
\end{definition}

\begin{remark}
 We note that the algebraic cabling procedure is not the standard topological construction of a cable of a knot. However, the resulting cabling formula (Prop. \ref{prop_cabling}) is particularly simple, which makes it convenient for our purposes. This cabling procedure is related to algebraic knots, which are the knots obtained by intersecting a (singular) irreducible algebraic curve in $\C^2$ with a small copy of $S^3$ around the singularity. In particular, if the $\mm$ and $\nn$ are the \emph{Newton pairs} of an algebraic plane curve (see \cite[Appendix to Ch.\ 1]{EN85}), then the algebraic knot obtained from this curve is $K(\mm,\nn)$. The knots that arise this way are exactly those with $n_i > 0$. (These knots are determined by their Alexander polynomial.)
 
 We will not need to discuss algebraic knots, but it is worth calling attention to \cite[Conj.\ 2.4(iii)]{CD14}, which states that if the $\mm$, $\nn$ are the Newton pairs of an algebraic knot, then the specialization $J^\E(\square;q=1,t,u=0)$ is related to the Betti numbers of the Jacobian factor of the curve. It is not clear if the skein-theoretic point of view in this paper can say anything about this conjecture.
\end{remark}

We will use the following as our definition of the Homflypt polynomial of $K(\mm,\nn)$. This definition differs from the standard definition of the Homflypt polynomial of $K$ by a monomial $s^\bullet v^\bullet$ depending on $\mm$, $\nn$ and $\lvert \lambda \rvert$, but we will ignore this difference since the conjecture in \cite{CD14} is stated up to an overall constant.

\begin{definition}\label{def_ourhomfly}
The evaluation in the skein of $S^3$ of the framed knot $K(\mm,\nn)$ colored by the element $Q_\lambda \in \cc^+$ will be denoted as follows:
\begin{equation*}
J^H(\mm,\nn,\lambda; v,s) \in Homflypt(S^3) = \C[v^\pmone, s^\pmone,(s^k - s^{-k})^{-1}]
\end{equation*}
\end{definition}

In this section we will prove the following theorem:
\begin{theorem}\label{thm_iteratedcable}
For $\mm,\nn,\lambda$ as above, we have the following specializations:
\begin{eqnarray}
v^\bullet s^\bullet J^\E(\mm,\nn,\lambda; q,t,u)\Big|_{q=s^{-2}, t=s^{-2}, u = v^2} &=&  J^H(\mm,\nn,\lambda; v,s) \label{eq_equality1}
\\
{ } u^\bullet J^\E (\mm,\nn,\lambda; q,t,u)\Big|_{u = t^N} &=& q^\bullet t^\bullet  J^N(\mm,\nn,\lambda; q,t)\label{eq_equality2}
\end{eqnarray}
(where the powers denoted by ``$\bullet$'' depend on $\mm$,$\nn$, and $\lvert \lambda\rvert $, but not on $N$). In particular, the Connection Conjecture \cite[Conj.\ 2.4(i)]{CD14} is true.
\end{theorem}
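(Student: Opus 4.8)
The plan is to prove the two displayed specializations separately and then combine them to deduce the Connection Conjecture. Both rest on the fact that $J^\E$ is built, via Definition \ref{def_JE}, from the cabling formula of Proposition \ref{prop_cabling}: there is an element $\Phi_{\mm,\nn} \in \E_{q,t}$ (a product of generators determined inductively by the cabling data of Definition \ref{def_ourcable}, conjugated by the relevant $\SL_2(\Z)$-elements) so that $J^\E(\mm,\nn,\lambda)$ is obtained by acting by $\Phi_{\mm,\nn}$ on the vector corresponding to $Q_\lambda$ in the $t$-deformed module of \cite{SV13} and then applying the $t$-deformed evaluation. Thus the whole proof amounts to tracking how this single construction behaves under the two relevant specializations of parameters.

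For equation (\ref{eq_equality1}) I would specialize $t = q = s^{-2}$ directly. By Theorem \ref{thm_HisotoE} the algebra $\E_{q=s^{-2},t=s^{-2}}$ is identified with $H_{s,v}$ under $w_\xx \leftrightarrow P_\xx$, and by construction of the $t$-deformation the module of \cite{SV13} degenerates at $t=q$ to the Homflypt module $\cc$ (with $Q_\lambda$ the image of the Schur element), the action degenerates to the $H$-action of Theorem \ref{thm_Cactionfull}, and the deformed evaluation degenerates to the Homflypt evaluation $\ev^H$. Hence $J^\E|_{q=t=s^{-2},\, u=v^2}$ equals $\ev^H(\Phi^H_{\mm,\nn} \cdot Q_\lambda)$, which is $J^H(\mm,\nn,\lambda)$ up to the overall monomial $v^\bullet s^\bullet$ recording the discrepancy between the algebraic-cabling normalization and the writhe/framing normalizations built into $\ev^H$. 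The content of this step is therefore parameter bookkeeping together with the citation of \cite{SV13} for the $t=q$ degeneration.

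For equation (\ref{eq_equality2}) I would use the surjection $\E_{q,t} \twoheadrightarrow \H^N_{q,t}$ of Schiffmann--Vasserot \cite{SV11}, under which the parameter $u$ is sent to $t^N$. The key point is that this surjection is compatible with the remaining data: the $t$-deformed module of \cite{SV13} surjects (via $\pi_N$) onto the polynomial representation $\Lambda^N$ of $\H^N_{q,t}$, the vector $Q_\lambda$ maps to the corresponding symmetric polynomial, and the $\E$-evaluation descends to the evaluation used in Definition \ref{def_JN}. Granting these compatibilities, applying $\pi_N$ to the identity defining $J^\E$ and setting $u = t^N$ yields $J^N(\mm,\nn,\lambda)$ up to the monomial $q^\bullet t^\bullet$. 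Verifying this threefold compatibility is the hard part of the argument: one must match two a priori different constructions of the relevant representation and its pairing, and carefully track the parameter dictionary (in particular the appearance of $u = t^N$ and the attendant normalizing monomials), which is where essentially all of the representation-theoretic input of \cite{SV11, SV13} is consumed.

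Finally, the Connection Conjecture follows by comparing the two specializations on their common overlap. Imposing $q = t = s^{-2}$ together with $u = t^N$ forces $v^2 = u = s^{-2N}$, that is $v = s^{-N}$, which is precisely the $\gl_N$ specialization of the Homflypt variables. At this common point both (\ref{eq_equality1}) and (\ref{eq_equality2}) express a monomial times the value of $J^\E$ at $q=t=s^{-2}$, $u=s^{-2N}$; equating them gives $J^N(\mm,\nn,\lambda; q=s^{-2}, t=s^{-2}) = J^H(\mm,\nn,\lambda; v=s^{-N}, s)$ up to an overall monomial, which is exactly the assertion of \cite[Conj.\ 2.4(i)]{CD14}.
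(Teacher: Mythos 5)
Your overall architecture --- build both specializations out of the single cabling-formula construction of $J^\E$ and then intersect them --- is the paper's, and your treatment of equation (\ref{eq_equality2}) is essentially the paper's proof: the surjection $\phi^N:\E_{q,t}\onto \SH^N_{q,t}$ of \cite{SV11}, the compatibility $\bigl(q^\bullet t^\bullet u^\bullet \pi_N\circ v_{m,n}\bigr)\big|_{u=t^N}=P^N_{m,n}\circ\pi_N$ from \cite{SV13}, the $\SL_2(\Z)$-equivariance giving $\phi^N(\iota^\E_{m,n}(x))=\iota^N_{m,n}(\pi_N(x))$, and the identity $\ev^\E\big|_{u=t^N}=\ev^N\circ\pi_N$. (One detail you elide: $\gamma_{m,n}$ does \emph{not} preserve $\SH^{N,>}_{q,t}$, and it is the standing hypothesis $m_i>0$ that keeps the images of the $P^N_{k,0}$ inside it.) Your deduction of the Connection Conjecture by evaluating both specializations at the common point $u=t^N$, $v=s^{-N}$ is also acceptable.

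The genuine gap is in your proof of equation (\ref{eq_equality1}). You assert, as a citation of \cite{SV13}, that at $t=q$ the deformed module ``degenerates to the Homflypt module $\cc$,'' that the action degenerates to the $H$-action of Theorem \ref{thm_Cactionfull}, and that the deformed evaluation degenerates to $\ev^H$. None of this is in \cite{SV13}, and as stated it fails on three counts. First, the module of \cite{SV13} is $\Lambda$, which matches only $\cc^+\cong\Lambda$, the span of the $Q_{\lambda,\emptyset}$; whether the full $\cc\cong\Lambda\otimes\Lambda$ deforms is explicitly posed as an \emph{open question} in the paper's introduction, and the argument survives only because $Q_\lambda\in\cc^+$ and the cabling maps with $m_i>0$ involve only $H^>$ acting on $\cc^+$. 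Second, even on $\Lambda$ the actions do \emph{not} coincide: comparing Lemma \ref{lemma_actiononLambda} with equation (\ref{eq_Eaction}) (using that Macdonald polynomials become Schur functions at $t=q$), the skein generator $P_{0,n}$ acts as the twisted Hall generator $\tilde\varphi(P_{0,n})$ \emph{plus} the additive scalar $\frac{v^{-n}-v^n}{s^n-s^{-n}}$, as in equation (\ref{eq_p0n}); the paper's Proposition \ref{prop_compareactions} repairs this by restricting to $H^>$, whose elements are sums of products of commutators of $P_{m,0}$ and $P_{0,n}$, so the constant cancels. Your proposal never notices this constant, and a naive degeneration claim gives wrong operators outside $H^>$. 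Third, the evaluations differ by the monomial $vs^{-\lvert\lambda\rvert}$, which must be verified from (\ref{eq_macev}) by a combinatorial identity, and the assertion that the accumulated prefactors depend only on $\mm$, $\nn$, $\lvert\lambda\rvert$ requires the gradedness of both actions (Remark \ref{rmk_homogeneous}); attributing the monomial vaguely to ``writhe/framing normalization'' is not an argument. In short, the step you dismiss as parameter bookkeeping plus a citation is precisely the new computation of the paper (Proposition \ref{prop_compareactions} together with the evaluation comparison), and your version of it contains a false intermediate statement that must be repaired along exactly those lines.
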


\begin{remark}
The existence of a polynomial $J^\E$ satisfying the second specialization was announced as a theorem in \cite{CD14}. The proof of this is essentially identical to the proof in \cite{GN13} (which used the results in \cite{SV11} and  \cite{SV13}). For the sake of completeness we will include this proof in Section \ref{sec_dahapoly}. We also remark that the stabilization variable $a$ in the Connection Conjecture of \cite{CD14} is $-u$ for us, so their specialization $a = -t^N$ becomes our specialization $u = t^N$.
\end{remark}

\subsection{Homflypt cabling formula}
In this section we give a cabling formula for the Homflypt polynomial $J^H(\mm,\nn,\lambda; v,s)$ of the $\lambda$-colored framed knot $K(\mm,\nn)$. In particular, this will give a algebraic formula for $J^H$ in terms of the action of $H$ on the skein $\cc$ of the annulus. This formula will later be compared to a specialization of the formula given in Section \ref{sec_epoly} which defines $J^\E$, and this will imply the first equality of Theorem \ref{thm_iteratedcable}. 

To simplify comparison to the elliptic Hall algebra, we will need to twist the action of $H$ on $\cc$ by an automorphism. To try to make this section self-contained, we will recall the necessary facts about $H$ and $\cc$ before giving the cabling formula.

\subsubsection{Notation}
We will twist the action of $H$ on $\cc$ by the automorphism $P_{m,n} \mapsto P_{-n,m}$. To compare with the constructions in the following sections we will use the following definitions.
\begin{definition}
We define the following subalgebras of $H$:
\[
H^\geq := \langle P_{m,n} \mid m \geq 0\rangle,\quad\quad H^> := \langle P_{m,n} \mid m > 0 \rangle
\]
We also will use the following $R$-submodule of $\cc$
\[
\cc^+ := R\{Q_{\lambda,\emptyset}\} \stackrel{\sim}\to \Lambda,\quad\quad Q_{\lambda,\emptyset} \mapsto s_\lambda
\]
\end{definition}
The map $\cc^+ \stackrel \sim \to \Lambda$ is an \emph{algebra} isomorphism by \cite[Thm.\ 8.2]{Luk05}. The action of $H^\geq$ preserves the subspace $\cc^+ \subset \cc$, so we can identify $\Lambda$ as an $H^\geq$-module. This module structure is described as follows.

\begin{lemma}\label{lemma_actiononLambda}
 The action of $H$ on $\Lambda$ is given by
\begin{eqnarray*}
P_{m,0}\cdot s_\lambda &=& p_m s_\lambda\\
P_{0,n}\cdot s_\lambda &=& \left[\frac{v^{-n} - v^n}{s^n - s^{-n}} + v^n(s^{-n} - s^n)\sum_{x \in \lambda} s^{-2c(x)}\right] s_\lambda \\
&=& \left[\frac{v^{-n} - v^n}{s^n - s^{-n}} + v^n s^{-n} \sum_{i=1}^k (s^{-2n\lambda_i} - 1)s^{2ni}\right]s_\lambda
\end{eqnarray*}
where $\lambda = (\lambda_1,\ldots,\lambda_k)$.
\end{lemma}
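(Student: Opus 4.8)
The plan is to unwind the twisted action and reduce everything to the eigenvalue and multiplication formulas already recorded in Theorem \ref{thm_Cactionfull}. Recall that the action is twisted by the automorphism $\gamma(P_{m,n}) = P_{-n,m}$, i.e. by the rotation $(m,n)\mapsto(-n,m)$ in $\SL_2(\Z)$, so that the new action of $P_{m,0}$ is the old action of $P_{0,m}$ and the new action of $P_{0,n}$ is the old action of $P_{-n,0}$. First I would check that this is the correct direction of the twist: under it a generator $P_{m,n}$ with $m\geq 0$ acts as the old $P_{-n,m}$, whose second coordinate $m$ is $\geq 0$, and by equations (\ref{full2}) and (\ref{full3}) such elements send $Q_{\lambda,\emptyset}$ back into $\cc^+$, since the only terms that could leave $\cc^+$ come from the $\mu-m$ sum with $\mu=\emptyset$, which is empty for $m>0$. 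This justifies regarding $\cc^+\cong\Lambda$ as an $H^{\geq}$-module.

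For the first formula, the old $P_{0,m}$ acts on $\cc$ by multiplication by $P_m\in\cc$ (as noted in the description of $\cc$ as a module over $H$). Since $\cc^+\cong\Lambda$ is an \emph{algebra} isomorphism carrying $P_m$ to $p_m$ and $Q_{\lambda,\emptyset}$ to $s_\lambda$, multiplication by $P_m$ becomes multiplication by $p_m$, which gives $P_{m,0}\cdot s_\lambda = p_m s_\lambda$ immediately.

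For the second formula, the old $P_{-n,0}$ acts on $Q_{\lambda,\emptyset}$ as the scalar meridian eigenvalue $s_{\lambda,\emptyset}(-n)$ of equation (\ref{full1}). Using the symmetry $s_{\lambda,\mu}(-m)=s_{\mu,\lambda}(m)$ this equals $s_{\emptyset,\lambda}(n)$, and since $C_\emptyset=0$ the defining expression for $s_{\emptyset,\lambda}(n)$ collapses to
\[
\frac{v^{-n}-v^n}{s^n-s^{-n}} + v^n(s^{-n}-s^n)\,C_\lambda(s^{-2n}),
\]
which is the first bracketed expression, with $C_\lambda(s^{-2n})=\sum_{x\in\lambda}s^{-2nc(x)}$ (I note in passing that the exponent in the statement should read $s^{-2nc(x)}$ rather than $s^{-2c(x)}$).

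Finally I would verify that the two bracketed expressions agree by rewriting $C_\lambda$ as a sum over rows. Summing the geometric series across each row and tracking the content of its leftmost cell in the continental convention gives $C_\lambda(t)=\frac{t}{t-1}\sum_{i=1}^k t^{-i}(t^{\lambda_i}-1)$; substituting $t=s^{-2n}$ and simplifying $\frac{t}{t-1}$ into $\frac{s^{-n}}{s^{-n}-s^n}$ cancels the factor $(s^{-n}-s^n)$ and produces $v^n s^{-n}\sum_{i=1}^k(s^{-2n\lambda_i}-1)s^{2ni}$, the second form. The only real work in the argument is this per-row content bookkeeping together with keeping the twist direction and sign conventions consistent; there is no genuine obstacle, since all of the substantive input is already contained in Theorem \ref{thm_Cactionfull} and the identification $\cc^+\cong\Lambda$.
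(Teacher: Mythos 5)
Your argument is correct and takes essentially the same route as the paper's own (very terse) proof: everything is read off from Theorem \ref{thm_Cactionfull} after unwinding the twist $P_{m,n}\mapsto P_{-n,m}$ --- the first formula is the $\mu=\emptyset$ case of equation (\ref{full2}) (equivalently, multiplication by $P_m$ under Lukac's algebra isomorphism, as you argue), the eigenvalue is $s_{\lambda,\emptyset}(-n)=s_{\emptyset,\lambda}(n)$ from equation (\ref{full1}), and your per-row geometric series is exactly what the paper calls the telescoping sum giving the third expression. You are also right about the typo: the exponent in the middle expression should be $s^{-2nc(x)}$, i.e.\ the sum is $C_\lambda(s^{-2n})$, as forced by the definition of $s_{\lambda,\mu}(m)$ and by agreement with the third expression.
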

\begin{proof}
 The first equality follows from the fact that when $\mu = \emptyset$, the second equation of Theorem \ref{thm_Cactionfull} agrees with the Murnaghan-Nakayama rule for the product $p_m s_\lambda$ of a power sum times a Schur function. The second equality is translated from Theorem \ref{thm_Cactionfull}, and the third equality follows from the fact that the sum for each row is a telescoping sum.
\end{proof}

\begin{remark}\label{rmk_previouslyproved}
 These equations have been proved previously by Morton and coauthors. The first follows from the identification of $P_{m,0}$ with the power sum function in \cite{Mor02, MM08}, and the second appears as \cite[Lemma 17]{MM08}.
\end{remark}

If $K$ is a framed knot, then the inclusion $N_K \hookrightarrow S^3$ induces an $R$-linear evaluation map $\ev^H_K: \cc \to R$. For later use we recall an explicit formula for the evaluation map $\ev_U^H$ restricted to the subspace $\Lambda\subset \cc$ when $U$ is the $0$-framed unknot. 
\begin{lemma}[{\cite[eq.\ (12)]{ML03}}]
For the $0$-framed unknot $U$, the evaluation map is
\[
\ev^H_U: \Lambda \to R,\quad \quad \ev^H_U(s_\lambda) = \prod_{x \in \lambda} \frac{v^{-1}s^{c(x)} - vs^{-c(x)}}{s^{hl(x)} - s^{-hl(x)}}
\]
\end{lemma}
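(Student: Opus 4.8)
The plan is to reduce the lemma to a classical symmetric-function identity (the hook–content formula) by exploiting two structural facts. First I would argue that the restriction $\ev^H_U\colon \cc^+ \cong \Lambda \to R$ is a \emph{ring} homomorphism: the algebra product on $\cc^+$ is realized by nesting two annular patterns around the core of the solid torus, and satelliting the $0$-framed unknot by such a nested pattern produces the split union of the two separately decorated unknots, since the $0$-framed longitude of the unknot is an unknot of linking number $0$ and hence unlinked from it. As the Homflypt bracket is multiplicative under split union in the normalization with $\ev^H_U(\emptyset)=1$, we obtain $\ev^H_U(cc') = \ev^H_U(c)\,\ev^H_U(c')$. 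Combined with the value $\ev^H_U(P_m) = (v^{-m}-v^m)/(s^m-s^{-m})$ recorded earlier (and $P_m \leftrightarrow p_m$ under $\cc^+\cong\Lambda$), this identifies $\ev^H_U$ with the specialization homomorphism $p_m \mapsto (v^{-m}-v^m)/(s^m-s^{-m})$.

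It therefore suffices to prove the purely combinatorial identity that this power-sum specialization sends $s_\lambda$ to $\prod_{x\in\lambda} (v^{-1}s^{c(x)} - vs^{-c(x)})/(s^{hl(x)} - s^{-hl(x)})$. The key observation is that at $v = s^{-N}$ the specialization becomes $p_m \mapsto \sum_{i=1}^N s^{(N+1-2i)m}$, i.e.\ the substitution of the $N$ variables $(s^{N-1}, s^{N-3}, \ldots, s^{1-N})$, so that for every positive integer $N$,
\[
\ev^H_U(s_\lambda)\big|_{v=s^{-N}} = s_\lambda(s^{N-1}, s^{N-3}, \ldots, s^{1-N}).
\]
I would then extract the scalar $s^{(1-N)\lvert\lambda\rvert}$ by homogeneity, set $q=s^2$, and apply the classical hook–content formula $s_\lambda(1,q,\ldots,q^{N-1}) = q^{n(\lambda)}\prod_{x}(1-q^{N+c(x)})/(1-q^{hl(x)})$. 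A short bookkeeping computation, using $\sum_x hl(x) = n(\lambda) + n(\lambda') + \lvert\lambda\rvert$ and $\sum_x c(x) = n(\lambda') - n(\lambda)$, shows that the resulting expression agrees exactly with the claimed product evaluated at $v = s^{-N}$.

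Finally, both sides of the desired identity are Laurent polynomials in $v$ with exponents in $[-\lvert\lambda\rvert,\lvert\lambda\rvert]$ — the left side because $s_\lambda$ is a polynomial in the $p_m$ and each $\ev^H_U(p_m)$ is such a Laurent polynomial, and the right side visibly so. Since they agree at the infinitely many distinct values $v = s^{-N}$, $N \in \N$ (with $s$ treated as a formal parameter over $\C$), their difference is a Laurent polynomial over $\C(s)$ with infinitely many roots and hence vanishes, proving the lemma. I expect the main obstacle to be Step~1: making the multiplicativity statement fully rigorous requires care with the framing and normalization conventions, since the value of a split union depends on the chosen normalizations of the empty and unknot diagrams. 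Once that is pinned down, the remainder is the standard hook–content computation together with the elementary interpolation argument in $v$.
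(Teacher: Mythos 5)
Your proof is correct, and it is worth noting up front that the paper itself contains no argument for this lemma: it is quoted directly from Morton--Lukac \cite{ML03}, eq.\ (12). So your write-up is a genuine reconstruction rather than an alternative to an in-paper proof. The reconstruction is sound, and all of its skein-theoretic inputs are already available in the paper: the multiplicativity of $\ev^H_U$ on $\cc^+$ (your split-union argument, where the $0$-framing is exactly what makes the two nested satellite annuli lie in disjoint balls); the value $\ev^H_U(P_m) = (v^{-m}-v^m)/(s^m-s^{-m})$, recorded in the remark in Section 4; and the algebra isomorphism $\cc^+ \cong \Lambda$ sending $P_m \mapsto p_m$ (Lukac, cited in Section 6). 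From there your combinatorics checks out: at $v=s^{-N}$ the specialization $p_m \mapsto \sum_{i=1}^N s^{(N+1-2i)m}$ is the principal evaluation, and after factoring out $s^{(1-N)\lvert\lambda\rvert}$ and setting $q=s^2$, the hook--content formula gives $s^{(1-N)\lvert\lambda\rvert + 2n(\lambda)}\prod_x (1-q^{N+c(x)})/(1-q^{hl(x)})$, which matches the claimed product at $v=s^{-N}$ (the exponent bookkeeping via $\sum_x hl(x) = n(\lambda)+n(\lambda')+\lvert\lambda\rvert$ and $\sum_x c(x) = n(\lambda')-n(\lambda)$ yields $2n(\lambda)+(1-N)\lvert\lambda\rvert$ on both sides). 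The interpolation step is also fine: both sides are Laurent polynomials in $v$ over the field $\C(s)$ agreeing at the infinitely many distinct points $v=s^{-N}$. This route --- identify the $v=s^{-N}$ specialization with the $\gl_N$ quantum dimension and interpolate --- is essentially the mechanism by which the formula was established in the Aiston--Morton/Lukac literature, so you have in effect recovered the cited proof rather than found a shortcut around it.

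One sentence deserves tightening: you justify splitness of the two parallel copies of $U$ by saying the $0$-framed longitude ``is an unknot of linking number $0$ and hence unlinked from it.'' Linking number zero does not in general imply a split link (the Whitehead link is the standard counterexample). What you actually need, and what is true here, is that the $0$-framed pushoff of the unknot bounds a disk disjoint from a disk bounded by the original, so the two parallel solid tori sit in disjoint balls; this is special to the unknot with framing $0$ and is the precise reason $\ev^H_U$ is a ring homomorphism, as you correctly anticipate in flagging the framing conventions as the delicate point.
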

(If $x \in \lambda$ is a box, then $c(x)$ and $hl(x)$ are its content and hook length, see Section \ref{sec_notation}.)

\begin{remark}
It was shown in \cite[Thm.\ 1]{Mor07} that the evaluation $\ev^H_K(Q_{\lambda,\mu})$ is divisible by the evaluation $\ev_U^H(Q_{\lambda,\mu})$. (The latter evaluation is typically called the quantum dimension.)
\end{remark}

\subsubsection{The cabling formula}
Let $K$ be a framed knot with $N_K$ a tubular neighborhood of $K$ and $T_K$ the boundary of $N_K$. We use the framing of $K$ to identify $T$ with the standard torus, and we use this to identify $H$ with the skein of $T_K$ and $\cc$ with the skein of $N_K$. Under these identifications, the framed knot $K$ is isotopic to $P_{1,0}\in H$ and is also isotopic to $Q_{(1),\emptyset} \in \cc^+ \subset \cc$. 

Let $K(m,n)$ be the algebraic $(m,n)$ cable of $K$ and let $N_{m,n}$ be a tubular neighborhood of $K(m,n)$. We identify $\cc$ with the skein of $N_{m,n}$ using the framing of $K(m,n)$ - in particular, under this identification, the element $Q_{(1),\emptyset}$ is isotopic to $K(m,n)$.

Finally, let $\Gamma_{m,n}: N_{m,n} \to N_K$ be the inclusion, let $\iota_{m,n}^H: \cc \to H$ be the inclusion given by inserting the annulus along the $(m,n)$ curve. We choose 
\begin{equation}\label{eq_gammamn}
\gamma_{m,n} \in \SL_2(\Z)\textrm{ such that } \gamma_{m,n}\left(\begin{array}{c}1 \\ 0 \end{array}\right) = \left(\begin{array}{c} m \\ n \end{array}\right)
\end{equation}
We will write 
\[ \Gamma^H_{m,n}: \cc \to \cc
\] for the $R$-linear map induced by $\Gamma_{m,n}$ (where we have identified $H(N_{m,n})$ and $H(N_K)$ with $\cc$ as described above). Then the following lemma follows immediately from our choices of identification. (See, e.g. \cite[Lemma 2.20]{Sam14}.)
\begin{lemma}\label{lemma_cablingmap}
Under the identifications above, the $R$-linear map $\Gamma^H_{m,n}: \cc \to \cc$ is 
\[
\Gamma^H_{m,n}(x) = \gamma_{m,n}(\iota_{1,0}(x))\cdot 1 = \iota_{m,n}(x)\cdot 1
\]
\end{lemma}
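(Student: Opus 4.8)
The plan is to verify the two asserted equalities separately, tracking each identification with care; both are essentially tautological once the conventions are spelled out, so the work lies entirely in bookkeeping. Explicitly, I would split the claim into a purely algebraic identity $\gamma_{m,n}(\iota_{1,0}(x)) = \iota_{m,n}(x)$ in $H$, and the geometric identity $\Gamma^H_{m,n}(x) = \iota_{m,n}(x)\cdot 1$. Writing $\iota_\xx$ for the decoration map along the framed embedded $\xx$-curve (so that $\iota_{1,0}$ and $\iota_{m,n}$ are special cases), the first identity will come from the mapping class group action, and the second from unwinding the definition of $\Gamma^H_{m,n}$.

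For the algebraic identity I would invoke the $\SL_2(\Z)$-action on $H$ coming from the mapping class group of $T^2$. As noted in the remark following Definition \ref{def_px}, this action preserves the framing annuli of embedded curves, and hence permutes the decorated curves: for any $\gamma \in \SL_2(\Z)$, primitive $\xx$, and $x \in \cc$, one has $\gamma(\iota_\xx(x)) = \iota_{\gamma(\xx)}(x)$, because $\gamma$ carries the framed embedded $\xx$-curve to the framed embedded $\gamma(\xx)$-curve and the decoration depends only on this framed curve. Taking $\xx = (1,0)$ and using that $\gamma_{m,n}$ sends $(1,0)$ to $(m,n)$ by \eqref{eq_gammamn} yields $\gamma_{m,n}(\iota_{1,0}(x)) = \iota_{m,n}(x)$, and applying the module action $\cdot\,1$ to both sides gives the right-hand equality of the lemma.

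For the geometric identity I would unwind the definition of $\Gamma^H_{m,n}$. By property (4) in the list following Definition \ref{def_homflyskeinmod}, the inclusion $\Gamma_{m,n}: N_{m,n} \hookrightarrow N_K$ induces $(\Gamma_{m,n})_*: H(N_{m,n}) \to H(N_K)$, and $\Gamma^H_{m,n}$ is this map transported to $\cc \to \cc$ through the chosen identifications $H(N_{m,n}) \cong \cc$ (via the framing of $K(m,n)$) and $H(N_K) \cong \cc$ (via the framing of $K$). To compute it on $x$, represent $x$ by a diagram in the core annulus of $N_{m,n}$ and push it into $N_K$. The neighborhood $N_{m,n}$ is a thickening of a pushoff of the $(m,n)$-curve off $T_K = \partial N_K$, framed parallel to $T_K$; this is exactly the data recorded by $\iota_{m,n}$, which inserts $x$ along the framed $(m,n)$-curve on $T_K$ to produce $\iota_{m,n}(x) \in H(T_K) = H$. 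Thus pushing $x$ from $N_{m,n}$ into $N_K$ coincides with applying the $H$-module action of $\iota_{m,n}(x)$ to the empty diagram $1 \in H(N_K) = \cc$, since by property (3) this action is precisely ``pushing links from the boundary torus into $N_K$.'' This gives $\Gamma^H_{m,n}(x) = \iota_{m,n}(x)\cdot 1$.

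The one point requiring care, and the main potential obstacle, is the compatibility of the three framings involved: the surface framing of the $(m,n)$-curve entering $\iota_{m,n}$, the framing of $K(m,n)$ used to identify $H(N_{m,n}) \cong \cc$, and the framing of $K$ used to identify $H(N_K) \cong \cc$. A discrepancy in any of these would introduce a spurious power of $v$ through the framing-change relation. The decisive check is that the framing of $K(m,n)$ was \emph{defined} in Definition \ref{def_ourcable} to be parallel to the torus $T$, which is exactly the surface framing of the $(m,n)$-curve used in $\iota_{m,n}$; this makes the identification exact, with no framing correction, and completes the proof.
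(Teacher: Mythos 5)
Your proof is correct and takes the same route as the paper, which gives no argument beyond asserting that the lemma ``follows immediately from our choices of identification'' (citing the analogue in \cite[Lemma 2.20]{Sam14}); your two-step unwinding---$\SL_2(\Z)$-equivariance of the decoration maps $\iota_\xx$, plus the observation that the module action on $1$ realizes the decorated $(m,n)$-curve pushed into $N_K$---is precisely the bookkeeping the paper leaves implicit. In particular, your framing check via Definition \ref{def_ourcable} (the cable framing is parallel to $T$, hence matches the surface framing built into $\iota_{m,n}$) correctly pins down the one point where a spurious power of $v$ could otherwise enter.
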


Given sequences $\mm,\nn$ as before, we will use the composition
\[
\Gamma^H_{\mm,\nn} := \Gamma_{m_1,n_1}^H \circ \cdots \circ \Gamma_{m_k, n_k}^H
\]

\begin{proposition}\label{prop_cabling}
If $U$ is the $0$-framed unknot we have the following equality:
\[
 J^H(K(\mm,\nn),\lambda;s,v) = \ev^H_{K(\mm,\nn)}(Q_\lambda) = \left( \ev^H_U \circ \Gamma^H_{\mm,\nn}\right) (Q_\lambda)
\]
In particular, the $\lambda$-colored Homflypt polynomial $J^H(\mm,\nn,\lambda; v,s)$ of the iterated cable $K(\mm,\nn)$ (in our normalization, see Definition \ref{def_ourhomfly}) is equal to the right hand side.
\end{proposition}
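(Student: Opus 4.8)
The plan is to prove the second equality, the first being immediate from Definition \ref{def_ourhomfly}, which \emph{defines} $J^H(K(\mm,\nn),\lambda;s,v)$ to be $\ev^H_{K(\mm,\nn)}(Q_\lambda)$. I would prove a slightly more general statement, namely that for \emph{every} $c \in \cc$ one has $\ev^H_{K(\mm,\nn)}(c) = (\ev^H_U \circ \Gamma^H_{\mm,\nn})(c)$; the proposition is then the special case $c = Q_\lambda$. The generalization to arbitrary $c$ is essential, since the inductive step produces decorations that are not of the form $Q_\lambda$. I would argue by induction on the length $k$ of the cabling sequence, peeling off the outermost cabling $(m_k,n_k)$ at each step.

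The geometric heart of the argument is a functoriality fact that is essentially a restatement of Lemma \ref{lemma_cablingmap}. Write $K' = K(\mm_{k-1},\nn_{k-1})$ and let $N_{K'}$ be a tubular neighbourhood whose boundary torus is identified with $T^2$ via the framing. The knot $K(\mm,\nn)$ is the algebraic $(m_k,n_k)$ cable of $K'$, sitting in $N_{K'}$ along the $(m_k,n_k)$ curve, with neighbourhood $N_{m_k,n_k} \hookrightarrow N_{K'}$. Decorating $K(\mm,\nn)$ by $c \in \cc = H(N_{m_k,n_k})$ and pushing the result into $N_{K'}$ under the inclusion $\Gamma_{m_k,n_k}$ produces, by Lemma \ref{lemma_cablingmap}, exactly the core $K'$ decorated by $\Gamma^H_{m_k,n_k}(c) = \iota_{m_k,n_k}(c)\cdot 1 \in \cc = H(N_{K'})$. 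Hence, by functoriality of $H(-)$ under oriented embeddings applied to the composite $N_{m_k,n_k} \hookrightarrow N_{K'} \hookrightarrow S^3$, I obtain
\[
  \ev^H_{K(\mm,\nn)}(c) = \ev^H_{K'}\big(\Gamma^H_{m_k,n_k}(c)\big).
\]

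Granting this identity, the induction is routine. The base case $k=1$ is exactly the displayed identity with $K' = U$, since then $\Gamma^H_{\mm,\nn} = \Gamma^H_{m_1,n_1}$. For the inductive step I apply the inductive hypothesis to $K'$ with the decoration $c' = \Gamma^H_{m_k,n_k}(c) \in \cc$, giving
\[
  \ev^H_{K'}(c') = \ev^H_U\big(\Gamma^H_{m_1,n_1}\circ\cdots\circ\Gamma^H_{m_{k-1},n_{k-1}}(c')\big),
\]
and then substitute $c' = \Gamma^H_{m_k,n_k}(c)$. Since $\Gamma^H_{\mm,\nn} = \Gamma^H_{m_1,n_1}\circ\cdots\circ\Gamma^H_{m_k,n_k}$, the composition order matches and the right-hand side becomes $\ev^H_U(\Gamma^H_{\mm,\nn}(c))$, completing the step.

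The one point requiring genuine care — and the only place where the specific conventions enter — is verifying that the pushed-forward decoration of the cable is \emph{exactly} $\Gamma^H_{m_k,n_k}(c)$, with no stray framing twist or orientation reversal. This is precisely what the choice of $\gamma_{m_k,n_k} \in \SL_2(\Z)$ sending $(1,0)$ to $(m_k,n_k)$ and the ``parallel to the torus'' framing of the algebraic cable are designed to ensure, and it is the substance of Lemma \ref{lemma_cablingmap}. I therefore expect this normalization bookkeeping to be the main obstacle; once Lemma \ref{lemma_cablingmap} is invoked, the proposition reduces to the induction above. The framing discrepancy between this normalization and the standard Homflypt polynomial is the monomial $s^\bullet v^\bullet$ already absorbed into Definition \ref{def_ourhomfly}, so no further correction enters here.
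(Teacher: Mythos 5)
Your proposal is correct and is essentially the paper's own argument: the paper likewise reduces everything to Lemma \ref{lemma_cablingmap} together with functoriality of $H(-)$ under the chain of inclusions $N_{K(\mm_k,\nn_k)} \hookrightarrow \cdots \hookrightarrow N_U \hookrightarrow S^3$, whose composite is the inclusion $N_{K(\mm,\nn)} \hookrightarrow S^3$ defining $\ev^H_{K(\mm,\nn)}$. Your induction on $k$ merely unrolls that one-line composition statement (and your generalization to arbitrary $c \in \cc$ is implicit in the paper, since all maps involved are $R$-linear maps on $\cc$), so the two proofs coincide in substance.
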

\begin{proof}
This follows from Lemma \ref{lemma_cablingmap} and our choices of identification. First, the left hand side of the equation is induced from the inclusion of $N_{K(\mm,\nn)}\hookrightarrow S^3$, and this inclusion defines the ($\lambda$-colored) Homflypt polynomial. Then the right hand side is induced from the sequence of inclusions 
\[ 
N_{K(\mm_k,\nn_k)} \hookrightarrow N_{K(\mm_{k-1},\nn_{k-1})} \hookrightarrow \cdots \hookrightarrow N_{K(\mm_1,\nn_1)} \hookrightarrow N_U \hookrightarrow S^3
\]
and the composition of these inclusions is equal to the inclusion $N_{K(\mm,\nn)} \hookrightarrow S^3$.
\end{proof}

\begin{remark}
 This proposition shows that the colored Homflypt polynomials of an iterated cable of the unknot can be evaluated using the skein algebra $H$, the $\SL_2(\Z)$ action on $H$, the action of $H$ on $\Lambda$, and the evaluation map on $\Lambda$. The more standard way of writing the cabling formula gives an expression for the polynomials of the cable of $K$ in terms of the polynomials of $K$. Such an expression could be derived from our cabling formula, but we will not need to do this. However, in the simpler case of the Kauffman bracket skein module, both versions of the cabling formula have appeared in multiple places. (Precise statements appear in \cite[Cor.\ 2.15, Cor.\ 2.16]{Sam14}, together with references to other versions.)
 
 In \cite[Prop.\ 4.2]{CD14}, the $\sl_2$ version of the latter cabling formula was used to prove the analogue of the specialization in equation (\ref{eq_equality1}), and the same specialization was proved in \cite[Thm.\ A.8]{Sam14} using the former cabling formula (again for $\sl_2$). It therefore should not be surprising that our version of the Homflypt cabling formula implies the specialization in equation (\ref{eq_equality1}), after the Homflypt skein algebra and elliptic Hall algebra have been shown to be isomorphic.
\end{remark}

\subsection{The Hall algebra}\label{sec_epoly}
In this section we define a 3-variable polynomial that specializes to the Homflypt polynomial $\ev^H_{K(\mm,\nn)}(s_\lambda)$ of Proposition \ref{prop_cabling}. The key idea is that all the objects on the right hand side of the equality in Proposition \ref{prop_cabling} have $t$-deformations that come from the elliptic Hall algebra $\E_{q,t}$. We will use the work of Schiffmann and Vasserot in \cite{SV11,SV13}, where they constructed an action of a subalgebra of $\E_{q,t}$ on $\Lambda$. However, to simplify comparison to double affine Hecke algebras we will change $t \mapsto t^{-1}$ (as in Section \ref{sec_ehall}). We also use their renormalized generators $v_\xx := (q^{d(x)} - 1)u_\xx$. 

We first define subalgebras
\[
\E^\geq_{q,t} := \langle v_{m,n} \mid m \geq 0\rangle,\quad \quad \E^>_{q,t} := \langle v_{m,n} \mid m > 0\rangle
\]
We now recall from \cite[Prop.\ 1.4]{SV13} an action of $\E_{q,t}^\geq$ on $\Lambda$. The action will be written in terms of \emph{Macdonald polynomials} $P_\lambda \in \Lambda$, which form a basis for $\Lambda$. The element $v_{1,0}$ acts by multiplication by the power sum $p_1$, and for $k \geq 1$ we have
\begin{eqnarray}\label{eq_Eaction}
v_{0,k}\cdot P_\lambda &:=& \left( \sum_i (q^{k \lambda_i} - 1)t^{-k(i-1)}\right) P_\lambda \\
v_{0,-k}\cdot P_\lambda &:=& q^k\left( \sum_i (q^{-k \lambda_i} - 1)t^{k(i-1)}\right) P_\lambda\notag
\end{eqnarray}

\begin{remark}\label{rmk_homogeneous}
 The Macdonald polynomials $P_\lambda$ are a homogeneous basis of $\Lambda$. Then equation (\ref{eq_Eaction}) combined with the presentation of $\E_{q,t}$ implies that the operators $v_{m,n}$ are graded operators, in the sense that they take homogeneous elements to homogeneous elements. In particular, the action of $\E^+_{q,t}$ on $\Lambda$ is graded, which generalizes property (4) under Definition \ref{def_homflyskeinmod}.
\end{remark}

In fact, the subalgebra $\E_{q,t}^{hor} := \langle v_{k,0} \mid k \geq 0 \rangle$ is isomorphic to $\Lambda$ as a graded algebra, and it acts by multiplication operators. Given $m,n \in \Z$ relatively prime, we let $\gamma_{m,n} \in \SL_2(\Z)$ be as in equation (\ref{eq_gammamn}).  By \cite{BS12}, the group $\SL_2(\Z)$ acts on $\E_{q,t}$, and we let 
$\iota^\E_{m,n}: \Lambda \to \E_{q,t}$
be the composition of the isomorphism $\Lambda \to \E_{q,t}^{hor}$ with the automorphism $\gamma_{m,n}: \E_{q,t} \to \E_{q,t}$. Since $\iota^\E_{m,n}$ is a map of algebras, it is uniquely determined by the following formula:
\begin{equation}
 \iota^\E_{m,n}(p_k) = v_{km,kn} \in \E_{q,t}
\end{equation}

We recall Macdonald's evaluation map \cite[eq.\ VI.6.17 and VI.8.8]{Mac95} in terms of $P_\lambda$:
\begin{equation}\label{eq_macev}
\ev^\E: \Lambda \to \C[q^\pmone,t^\pmone,u^\pmone],\quad\quad \ev^\E(P_\lambda) := \prod_{x \in \lambda} \frac{t^{l'(x)} - u q^{a'(x)}}{1 - q^{a(x)}t^{l(x)+1}}
\end{equation}

\begin{definition}\label{def_JE}
Given $m,n$ relatively prime and $\mm,\nn$ as above, define maps $\Lambda \to \Lambda$ via
\begin{eqnarray*}
\Gamma^\E_{m,n}(x) &:=& \iota_{m,n}^\E(x)\cdot 1\\
\Gamma^\E_{\mm,\nn} &:=& \Gamma^\E_{m_1,n_1}\circ \cdots \circ \Gamma^\E_{m_k,n_k}
\end{eqnarray*}
We then define the polynomial
\begin{equation*}
J^\E(\mm,\nn,\lambda; q,t,u) := \ev^\E(\Gamma^\E_{\mm,\nn}(P_\lambda))
\end{equation*}
\end{definition}

Before we prove the first equality of Theorem \ref{thm_iteratedcable} we prove a proposition relating the actions of $\E_{s^{-2},s^{-2}}$ and $H$ on $\Lambda$. We recall that Theorem \ref{thm_HisotoE} states that there is an isomorphism $H \to \E_{s^{-2},s^{-2}}$ uniquely determined by $P_\xx \mapsto s^{d(\xx)}v_\xx$. We twist this by a graded automorphism of $H$ to obtain the isomorphism
\begin{equation}\label{eq_newiso}
 \tilde \varphi: H \stackrel \sim \to \E_{s^{-2},s^{-2}},\quad \quad \tilde \varphi(P_{m,n}) = v^{n}s^{-m+d(m,n)}v_{m,n}
\end{equation}

\begin{proposition}\label{prop_compareactions}
 For $x \in \Lambda$ and $P \in H^>$, we have
 \[
  P\cdot x = \tilde \varphi(P)\cdot x
 \]
\end{proposition}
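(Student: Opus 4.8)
The plan is to recognize both sides as algebra homomorphisms and to compare them on generators. Both the twisted skein action of Lemma~\ref{lemma_actiononLambda} and the rule $P\mapsto\tilde\varphi(P)\cdot(-)$, obtained by transporting the Schiffmann--Vasserot action (\ref{eq_Eaction}) along the isomorphism $\tilde\varphi$ of (\ref{eq_newiso}), extend to algebra homomorphisms $\rho,\rho'\colon H^{\geq}\to\End(\Lambda)$: the first because $H^{\geq}$ preserves $\cc^{+}\cong\Lambda$, and the second because $\tilde\varphi$ carries $H^{\geq}$ isomorphically onto $\E^{\geq}_{s^{-2},s^{-2}}$, on which (\ref{eq_Eaction}) is defined. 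It therefore suffices to prove $\rho=\rho'$ on a generating set of $H^{>}$. First I would record that $H^{>}$ is generated by the single row $\{P_{1,n}\mid n\in\Z\}$: by Theorem~\ref{thm_commutationrelations}, for $m\geq 2$ one has $P_{m,n}=\{c\}^{-1}[P_{1,a},P_{m-1,b}]$ with $a+b=n$ and $c=\det[(1,a)\ (m-1,b)]=n-ma$, which is nonzero for all but one integer value of $a$; an induction on $m$ then expresses every $P_{m,n}$ with $m>0$ in terms of the $P_{1,\bullet}$.

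Next I would compute $\rho$ and $\rho'$ on the two families $P_{1,0}$ and $P_{0,n}$. On $P_{1,0}$ they agree: $\tilde\varphi(P_{1,0})=v_{1,0}$ acts as multiplication by $p_1$ in (\ref{eq_Eaction}), while Lemma~\ref{lemma_actiononLambda} gives $P_{1,0}\cdot s_\lambda=p_1 s_\lambda$. On $P_{0,n}$ they agree only up to a scalar: using $\tilde\varphi(P_{0,n})=v^{n}s^{|n|}v_{0,n}$, the specialization $q=t=s^{-2}$, and comparing the eigenvalue in (\ref{eq_Eaction}) with the one in Lemma~\ref{lemma_actiononLambda}, one finds
\[
\rho(P_{0,n})=\rho'(P_{0,n})+\frac{v^{-n}-v^{n}}{s^{n}-s^{-n}}\,\id .
\]
This leftover constant equals $\ev^{H}_{U}(P_n)=\tfrac{v^{-n}-v^{n}}{s^{n}-s^{-n}}$, the quantum dimension of the $n$-th power sum, and its presence is precisely why the statement is restricted to $H^{>}$: the vertical operators $P_{0,n}$ themselves do \emph{not} satisfy the conclusion.

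The key step is then to pass from these to all of $\{P_{1,n}\}$ using the commutator relation $P_{1,n}=\{n\}^{-1}[P_{1,0},P_{0,n}]$, valid for $n\neq 0$ by Theorem~\ref{thm_commutationrelations}. Applying the homomorphisms and using that $\rho(P_{1,0})=\rho'(P_{1,0})$ while $\rho(P_{0,n})-\rho'(P_{0,n})$ is a scalar operator, the scalar commutes with $\rho(P_{1,0})$ and hence cancels in the commutator:
\[
\rho(P_{1,n})=\{n\}^{-1}[\rho(P_{1,0}),\rho(P_{0,n})]=\{n\}^{-1}[\rho'(P_{1,0}),\rho'(P_{0,n})]=\rho'(P_{1,n}).
\]
Together with the case $n=0$ handled above, this gives $\rho=\rho'$ on every generator $P_{1,n}$, and hence on all of $H^{>}$.

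The only genuinely computational point — and the place I expect to have to be most careful — is the scalar identity for $P_{0,n}$: matching the constant term $\tfrac{v^{-n}-v^{n}}{s^{n}-s^{-n}}$ and the main term of Lemma~\ref{lemma_actiononLambda} against the Schiffmann--Vasserot eigenvalue in (\ref{eq_Eaction}) after the twist by $v^{n}s^{|n|}$ and the specialization $q=t=s^{-2}$. Keeping track of the precise powers of $v$ and $s$ in $\tilde\varphi$ and of the $t^{-k(i-1)}$ shift in (\ref{eq_Eaction}) is where a sign or exponent slip would be fatal; the $n<0$ case uses the second line of (\ref{eq_Eaction}) together with $\tilde\varphi(P_{0,n})=v^{n}s^{-n}v_{0,n}$ and mirrors the $n>0$ case via the symmetry $s_{\lambda,\mu}(-m)=s_{\mu,\lambda}(m)$. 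Everything else in the argument is formal.
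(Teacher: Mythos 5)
Your proof is correct and is essentially the paper's own argument: both rest on checking that the two actions agree on the horizontal generators, that $\rho(P_{0,n})-\rho'(P_{0,n})$ is the scalar $\frac{v^{-n}-v^{n}}{s^{n}-s^{-n}}\,\id$ (your verification, including the $n<0$ case via the second line of the Schiffmann--Vasserot formula, matches the paper's computation), and that this scalar cancels inside commutators. The only organizational difference is that the paper verifies agreement on all $P_{m,0}$ (each acting as multiplication by $p_m$) and concludes via $P_{m,n}=\{mn\}^{-1}[P_{m,0},P_{0,n}]$ together with the remark that every element of $H^{>}$ is a sum of products of such commutators, whereas you check only $P_{1,0}$, recover the row $\{P_{1,n}\mid n\in\Z\}$ from $[P_{1,0},P_{0,n}]=\{n\}P_{1,n}$, and add a correct (but for the paper's route unnecessary) generation lemma that this row generates $H^{>}$.
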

\begin{proof}
 We first compare the actions of $P_{m,n}$ and $\tilde \varphi(P_{m,n})$ on $\Lambda$ for certain $m,n$. First,
\begin{eqnarray*}
 P_{m,0}\cdot s_\lambda &=& p_m s_\lambda \\ 
 v^{-0}s^{-m+0+m}v_{m,0} &=& v_{m,0} s_\lambda = p_m s_\lambda
\end{eqnarray*}
Second, from Lemma \ref{lemma_actiononLambda}, for all $n$ we have
\[
P_{0,n}\cdot s_\lambda =  \left[\frac{v^{-n} - v^n}{s^n - s^{-n}} + v^n s^{-n} \sum_{i=1}^k (s^{-2n\lambda_i} - 1)s^{2ni}\right]s_\lambda 
\]
It is well-known that when $t=q$, the Macdonald polynomials specialize to the Schur functions. Then for $n > 0$, the $t=q=s^{-2}$ specialization of equation (\ref{eq_Eaction}) states
\begin{eqnarray*}
 v^n s^{0+n}v_{0,n}\cdot s_\lambda 
 &=& v^n s^{-n} \left[  \sum_i (s^{-2n \lambda_i} - 1)s^{2ni}\right]\\
v^{-n} s^{0+n}v_{0,-n}\cdot s_\lambda 
&=& v^{-n} s^n\left[ \sum_i (s^{2n \lambda_i} - 1)s^{-2ni}\right]
\end{eqnarray*}
We have therefore shown that $P_{m,0}\cdot s_\lambda = \tilde \varphi(P_{m,0})\cdot s_\lambda$ and that
\begin{equation}\label{eq_p0n}
 P_{0,n}\cdot s_\lambda = \left[ \frac{v^{-n} - v^n}{s^n - s^{-n}} + \tilde \varphi(P_{0,n})\right]\cdot s_\lambda
\end{equation}
Using the commutation relations in $H$, it is clear that any element in $H^>$ can be written as sums of products of commutators of $P_{m,0}$ and $P_{0,n}$. In the commutator the constant term on the right hand side of equation (\ref{eq_p0n}) drops out, which shows that
\begin{equation}
 P\cdot s_\lambda = \tilde \varphi(P)\cdot s_\lambda,\quad\quad\textrm{ for all } P \in H^>
\end{equation}
which completes the proof of the proposition.
\end{proof}

\begin{proof}(of equality (\ref{eq_equality1}) of Theorem \ref{thm_iteratedcable})
We first use the cabling formula in Proposition \ref{prop_cabling} to reduce the equality that we are supposed to prove to the following:
\begin{equation}\label{eq_wanttoshow}
\ev^\E(\Gamma^\E_{\mm,\nn}(P_\lambda))\big|_{q=s^{-2}, t=s^{-2}, u = v^2} = v^\bullet s^\bullet \ev^H(\Gamma^H_{\mm,\nn}(s_\lambda))
\end{equation}
where the powers depend on $\mm$, $\nn$, and $\lvert \lambda\rvert$. We recall that if $p_k \in \Lambda$ is a power sum, then $\iota^H_{m,n} = P_{km,kn} \in H^>$ and $\iota^\E_{m,n}(p_k) = v_{km,kn} \in \E^>_{s^{-2},s^{-2}}$. Then Proposition \ref{prop_compareactions} implies
\[
 \iota^H_{m,n}(p_k) = v^{kn}s^{-km}s^kv_{km,kn}(p_k) \in \End_R(\Lambda)
\]
where the equality is equality of operators in $\End_R(\Lambda)$. Now the assignments $p_k \mapsto v^{kn}s^{-km}s^k p_k$ induce a graded algebra isomorphism $\Lambda \to \Lambda$, which shows that for any homogeneous $x \in \Lambda$ (and in particular for $x = s_\lambda)$, we have the equality of operators
\[
 \iota^H_{m,n}(x) = v^{\lvert x \rvert n}s^{-\lvert x \rvert m}s^{\lvert x \rvert}v_{\lvert x \rvert m,\lvert x \rvert n}(x) \in \End_R(\Lambda)
\]
Since this equality is as operators on $\Lambda$, this shows that $\Gamma_{m,n}^H(x) = v^\bullet s^\bullet \Gamma^\E_{m,n}(x)$, where the powers depend on $m$, $n$, and $\lvert x \rvert$. Finally, the actions of $H$ and $\E$ on $\Lambda$ are graded: if $a \in H^>$, $b \in \E^>_{q,t}$, and $x \in \Lambda$ are homogeneous, then $a\cdot x$ and $b \cdot x$ are also homogeneous. (See Remark \ref{rmk_homogeneous}.) This implies that $\Gamma_{m,n}^H$ and $\Gamma_{m,n}^\E$ are homogeneous maps, which implies that for homogeneous $x \in \Lambda$,
\begin{equation}
 \Gamma_{\mm,\nn}^H(x) = v^\bullet s^\bullet \Gamma_{\mm,\nn}^\E(x)
\end{equation}
where the powers depend on $\mm$, $\nn$, and $\lvert x \rvert$.

Now to finish the proof of equation (\ref{eq_wanttoshow}), all that remains is to compare the evaluation maps $\ev^\E$ and $\ev^H$. We then equate parameters $t = q = s^{-2}$ and $u = v^2$ in the formula (\ref{eq_macev}) and compute
\begin{eqnarray*}
 \ev^\E(s_\lambda) &=& \prod_{x \in \lambda} \frac{s^{-2l'} - v^2 s^{-2a'} } {1 - s^{-2a}s^{-2l-2} }\\
 &=& v \prod_{x \in \lambda} \left(s^{1-l'-a'+a+l}\right) \frac{v^{-1}s^{a'-l'} - vs^{l'-a'}}{s^{-a-l-1}-s^{a+l+1} }\\
 &=& vs^{-\lvert \lambda\rvert} \prod_{x \in \lambda} \frac{v^{-1}s^{c(x)} - vs^{-c(x)}}{s^{hl(x)} - s^{-hl(x)}}\\
 &=& vs^{-\lvert \lambda \rvert} \ev^H(s_\lambda)
\end{eqnarray*}
where we have written $l = l(x)$, etc. (The third equality is a straightforward combinatorial identity.) Since the $s_\lambda$ are a homogeneous linear basis of $\Lambda$ and the maps $\Gamma_{\mm,\nn}^H$ and $\Gamma_{\mm,\nn}^\E$ are homogeneous, this shows equality (\ref{eq_wanttoshow}) and completes the proof.
\end{proof}

\subsection{Double affine Hecke algebras}\label{sec_dahapoly}
We now briefly recall the construction of \cite{CD14}. Since the main theorem of this section will follow from comparing to \cite{SV13}, we will only introduce the notation necessary to make this comparison. (As in the previous section, the $t$ of \cite{SV13} is our $t^{-1}$.)

The double affine Hecke algebra $\H_{q,t}^N$, of $\textrm{GL}_N$, abbreviated DAHA, is the algebra generated by elements $T_i^\pmone$ for $1 \leq i \leq N-1$, and $X_j^\pmone$, $Y_j^\pmone$ for $1 \leq j \leq N$, subject to some relations which we will not write down. This algebra is $\Z^2$-graded, with $deg(X_i) = (1,0)$, $deg(Y_i) = (0,1)$, and $deg(T_i) = 0$. There is an $\SL_2(\Z)$ action on $\H_{q,t}^N$ which permutes the graded components.

Let $S$ be the symmetrizing idempotent in the finite Hecke algebra (which is generated by the $T_i$'s), which is characterized by $T_jS = S T_j = t^{1/2}\e$ for all $j$. The spherical DAHA is the subalgebra $\SH^N_{q,t} := S \H^N_{q,t} S$ of $\H^N_{q,t}$, and it is also $\Z^2$-graded. The  $\SL_2(\Z)$ action on $\H^N_{q,t}$ preserves the subalgebra $\SH_{q,t}^N$.

Following \cite[Sec.\ 2.2]{SV11}, for $k > 0$ we define elements 
\begin{equation*}
 P^N_{0,k} = S \sum_i Y_i^k S
\end{equation*}
Elements $P^N_\xx$ for $\xx \in \Z^2$ are defined using the $\SL_2(\Z)$ action. We define $\SH_{q,t}^{N,>}$ to be the subalgebra of $\SH_{q,t}^N$ generated by $P_{m,n}^N$ with $m > 0$.

Cherednik defined an action of $\SH_{q,t}^N$ on $\Lambda^N$ using Demazure-Lusztig operators (see, e.g. \cite{Che95}). Instead of defining these operators, we recall the following theorem of Schiffmann and Vasserot. (This determines the action of $\SH_{q,t}^{N,>}$ on $\Lambda^N$ uniquely up to scalars, which is enough for our purposes.)

\begin{theorem}[\cite{SV13, SV11}]\label{thm_sv}
The assignment $v_\xx \mapsto P_\xx^N$ extends uniquely to a $\Z^2$-graded $\SL_2(\Z)$-equivariant surjective algebra homomorphism 
 \[
 \phi^N: \E_{q,t} \twoheadrightarrow \SH_{q,t}^{N}
 \] 
 Furthermore, under the projection $\pi_N: \Lambda \to \Lambda^N$, the actions of the subalgebras $\E_{q,t}^>$ and $\SH_{q,t}^{N,>}$ are related via the formula
 \[
  \left(q^{\bullet}t^{\bullet}u^{\bullet}\pi_N\circ v_{m,n}\right)\Big|_{u=t^N} = P_{m,n}^N \circ \pi_N
 \]
where $v_{m,n}$ and $P^N_{m,n}$ are viewed as endomorphisms of $\Lambda$ and $\Lambda^N$, respectively, and where the powers denoted `$\bullet$' depend on $m$ and $n$ but not on $N$ or on $\lvert \lambda \rvert$.
\end{theorem}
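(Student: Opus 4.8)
The plan is to follow the construction of Schiffmann and Vasserot in \cite{SV11, SV13}, splitting the argument into (i) the existence of the graded, equivariant surjection $\phi^N$ and (ii) the stabilization formula, and translating throughout for our convention $t \mapsto t^{-1}$. For part (i), I would first define the elements $P^N_\xx \in \SH^N_{q,t}$ for all $\xx \in \Z^2$ by applying the $\SL_2(\Z)$-action to the $P^N_{0,k} = S\sum_i Y_i^k S$, and then verify that they satisfy the defining relations of $\E_{q,t}$ recalled from \cite[Thm.\ 5.4]{BS12}: commutativity of $P^N_\xx, P^N_\yy$ lying on a common line, and the bracket relation \eqref{eq_hallrelation1} expressed through the $\theta$-elements. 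The commutativity along lines is essentially immediate, since the $Y_i^k$ commute and conjugation by an $\SL_2(\Z)$-element carries a line to a line. Because the $P^N_\xx$ are defined via the $\SL_2(\Z)$-action, the map $v_\xx \mapsto P^N_\xx$ is $\SL_2(\Z)$-equivariant by construction, and it is $\Z^2$-graded since $\deg P^N_{0,k} = (0,k) = \deg v_{0,k}$ and the grading is $\SL_2(\Z)$-covariant. Surjectivity follows once one knows that the $P^N_\xx$ generate $\SH^N_{q,t}$, which is part of the structure theory of the spherical DAHA.

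For the stabilization part (ii), I would reduce to checking the formula on a generating set of $\E^>_{q,t}$, for instance the elements $v_{1,n}$. These can be written inside the ambient algebra $\E^\geq_{q,t}$ as normalized commutators of $v_{1,0}$ — which acts by multiplication by $p_1$ — with the diagonal generators $v_{0,k}$. The latter act diagonally on Macdonald polynomials: on $P_\lambda \in \Lambda$ the eigenvalue of $v_{0,k}$ is $\sum_{i \geq 1}(q^{k\lambda_i}-1)t^{-k(i-1)}$, a finite sum since each summand vanishes once $\lambda_i = 0$. On the spherical DAHA side, $P^N_{0,k}$ acts on $\pi_N(P_\lambda) = P_\lambda(x_1,\dots,x_N)$ through the Cherednik eigenvalues of the $Y_i$, contributing $\sum_{i=1}^N q^{k\lambda_i}t^{-k(i-1)}$ up to an overall normalization. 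The two diagonal eigenvalues differ by the geometric sum $\sum_{i=1}^N t^{-k(i-1)} = (1 - t^{-kN})/(1 - t^{-k})$, which becomes a function of $u$ alone after setting $u = t^N$. On passing to the commutators that define $\E^>_{q,t}$, this additive discrepancy cancels — exactly the mechanism used in Proposition \ref{prop_compareactions} — leaving only an overall multiplicative monomial in $q,t,u$. I would then propagate from the generators to all $v_{m,n}$ with $m > 0$ using that $\pi_N$ intertwines the $\Z^2$-graded, $\SL_2(\Z)$-equivariant actions, so that the normalizing powers $q^\bullet t^\bullet u^\bullet$ depend only on $(m,n)$ and not on $N$ or $\lvert \lambda \rvert$.

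The main obstacle is the verification of relation \eqref{eq_hallrelation1} for the $P^N_\xx$ inside the DAHA, i.e.\ that the off-diagonal brackets reproduce the $\theta$-elements with the correct sign $\epsilon(\xx,\yy)$ and normalization $\alpha_1$. This is the technical heart of \cite{SV11} and requires genuine computation with the Demazure--Lusztig operators rather than any formal argument. A secondary difficulty is the bookkeeping of the normalization monomials so that, after specializing $u = t^N$, they are manifestly independent of $N$ and $\lvert \lambda \rvert$; this independence is precisely what makes the family of surjections $\phi^N$ compatible and the $N \to \infty$ limit well-behaved, and it is what the stabilization formula is designed to record.
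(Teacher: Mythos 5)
Your proposal and the paper's proof diverge in a basic way: the paper does not reprove this theorem at all. It cites \cite[Thm.\ 3.1]{SV11} for the existence of the graded equivariant surjection $\phi^N$, and \cite[Lemma 1.3]{SV13} together with the discussion preceding \cite[Prop.\ 1.4]{SV13} for the stabilization formula; the only original content in the paper's proof is the reconciliation of conventions between the two sources. You instead sketch a from-scratch reconstruction, and by your own admission the central step of part (i) --- verifying that the elements $P^N_\xx$ satisfy the elliptic Hall relation \eqref{eq_hallrelation1}, with the correct sign $\epsilon(\xx,\yy)$ and normalization $\alpha_1$ --- is left undone. Naming this as ``the technical heart of \cite{SV11}'' is accurate, but it means your argument is a proof outline with its hardest component missing; as a blind standalone proof it has a genuine gap exactly there, and the honest repair is to do what the paper does and cite \cite[Thm.\ 3.1]{SV11} for that step, at which point your part (i) collapses into the paper's proof.

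Your mechanism for part (ii) is essentially correct and matches what the cited sources do: the eigenvalue of $v_{0,k}$ on $P_\lambda$ is $\sum_i (q^{k\lambda_i}-1)t^{-k(i-1)}$, the $Y_i$-eigenvalues give $\sum_{i=1}^N q^{k\lambda_i}t^{-k(i-1)}$ up to normalization, the discrepancy $\sum_{i=1}^N t^{-k(i-1)} = (1-t^{-kN})/(1-t^{-k})$ is additive and $\lambda$-independent, hence cancels in the commutators building the $v_{1,n}$ (which do generate $\E^>_{q,t}$, by the structure theory of \cite{BS12}), leaving monomial factors that become $N$-independent after $u=t^N$ --- the same cancellation used in Proposition \ref{prop_compareactions}. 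Two caveats: the truncated geometric sum becomes a function of $u$ \emph{and} $t$ after the substitution, not of $u$ alone (a harmless slip, since $N$-independence is all that is needed); more substantively, you miss the point the paper flags explicitly, namely that the definitions of $P^N_{m,n}$ in \cite{SV13} and \cite{SV11} differ, so that the surjection of \cite{SV13} is a twist of $\phi^N$ by a graded automorphism. This twist is precisely what produces the $q^\bullet$ factor in the statement (the $t^\bullet$ and $u^\bullet$ factors coming from the formulas below \cite[eq.\ (2.12)]{SV13}), and without accounting for it your bookkeeping of the normalizing monomials would not close up against the equivariant choice of $P^N_\xx$ that you made in part (i).
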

\begin{proof}
 The first statement is \cite[Thm.\ 3.1]{SV11}. The second statement follows from \cite[Lemma 1.3]{SV13} and the discussion directly preceding \cite[Prop.\ 1.4]{SV13}. We remark that the definition of the $P_{m,n}^N$ differs in \cite{SV13} and \cite{SV11} - we have chosen the latter because these make the map $\phi^N$ equivariant under the $\SL_2(\Z)$ action. The analogous surjection used in \cite{SV13} is a twist of $\phi^N$ by a graded automorphism, which accounts for the factor $q^\bullet$ in our statement. The factors $u^\bullet$ and $t^\bullet$ come from the two formulas below \cite[eq.\ (2.12)]{SV13}.
\end{proof}


There is a natural algebra map $\iota^N: \Lambda^N \to \SH_{q,t}^{N,>}$ which takes the power sum functions $p_k$ to the element $P^N_{k,0}$. Given $m,n \in \Z$ relatively prime with $m > 0$, we will write 
\[ \iota_{m,n}^N := \gamma_{m,n} \circ \iota^N
\]
 where $\gamma_{m,n} \in \SL_2(\Z)$ is as in equation (\ref{eq_gammamn}). We remark that the automorphism $\gamma_{m,n}$ of $\SH^N_{q,t}$ does not preserve the subalgebra $\SH_{q,t}^{N,>}$ - however, since $m > 0$, the image of the elements $P_{k,0}^N$ \emph{is} contained in the subalgebra $\SH_{q,t}^{N,>}$.

\begin{definition}[{\cite[eq.\ (2.13)]{CD14}}]\label{def_JN}
Given $m,n$ relatively prime with $m > 0$, sequences $\mm,\nn$ as above, and a partition $\lambda$ with at most $N$ parts, define maps $\Lambda^N \to \Lambda^N$ via
\begin{eqnarray*}
\Gamma^N_{m,n}(x) &:=& \iota^N_{m,n}(x)\cdot 1\\
\Gamma^N_{\mm,\nn} &:=& \Gamma^N_{m_1,n_1}\circ \cdots \circ \Gamma^N_{m_k,n_k}
\end{eqnarray*}
We then define the polynomial
\begin{equation*}
J^N(\mm,\nn,\lambda; q,t) := \ev^N(\Gamma^N_{\mm,\nn}(P_\lambda^N))
\end{equation*}
where $P^N_\lambda \in \Lambda^N$ is the Macdonald polynomial associated to the partition $\lambda$.
\end{definition}

\begin{remark}\label{rmk_rational}
The Macdonald polynomials are actually rational functions in $q$ and $t$, so the definition above actually produces a rational function. The definition in \cite{CD14} is a renormalization of the definition above (they divide by the evaluation of $P_\lambda^N$), and in their normalization the output of their formula is actually a polynomial, which is important for their purposes. The effect of this normalization is that the polynomials for the unknot are all 1. Since we work with the skein-theoretic normalization of the Homflypt polynomial, our choice of normalization is slightly more convenient for our purposes.
\end{remark}

\begin{proof}(of equation (\ref{eq_equality2}) of Theorem \ref{thm_iteratedcable})
We need to prove the equality
\[
 q^\bullet t^\bullet u^\bullet \ev^\E(\Gamma^\E_{\mm,\nn}(P_\lambda)) \Big|_{u=t^N} = \ev^N(\Gamma^N_{\mm,\nn}(P_\lambda^N))
\]
where the powers ``$\bullet$'' do not depend on $N$. We will do this by relating the various maps involving $\Lambda$ and $\Lambda^N$ with the projection $\pi_N:\Lambda \to \Lambda^N$ and $\phi^N: \E_{q,t} \to \SH_{q,t}^N$.

First, it is well known that $\pi_N(P_\lambda) = P^N_{\lambda}$. Since the surjection $\phi^N$ is $\SL_2(\Z)$-equivariant, we see that $\phi_N(\iota_{m,n}^\E(x)) = \iota_{m,n}^N(\pi_N(x))$ for $x \in \Lambda$. This combined with the assumption $m > 0$ and the second statement of Theorem \ref{thm_sv} shows the equality
\begin{equation}\label{eq_usemgeq0}
\left( q^\bullet t^{\bullet} u^\bullet \pi_N \circ \Gamma^\E_{m,n}\right)\Big|_{u=t^N} = \Gamma^N_{m,n}\circ \pi_N
\end{equation}
where the powers depend on $m,n$ but not on $N$. Finally, the equality 
\[
 \ev^\E\Big|_{u=t^N} = \ev^N\circ \pi_N
\]
completes the proof of the theorem.
\end{proof}

\bibliography{bibtex_arxiv}{}
\bibliographystyle{amsalpha}

\end{document}